\newcommand{\ie}{\emph{i.e.}}
\newcommand{\eg}{\emph{e.g.}}
\newcommand{\cf}{\emph{cf}}
\newcommand{\e}{\mathrm{e}}
\newcommand{\D}{\mathrm{d}}
\newcommand{\C}{\mathbb{C}}
\newcommand{\N}{\mathbb{N}}
\newcommand{\R}{\mathbb{R}}
\newcommand{\Hm}[1]{\leavevmode{\marginpar{\tiny%
$\hbox to 0mm{\hspace*{-0.5mm}$\leftarrow$\hss}%
\vcenter{\vrule depth 0.1mm height 0.1mm width \the\marginparwidth}%
\hbox to
0mm{\hss$\rightarrow$\hspace*{-0.5mm}}$\\\relax\raggedright #1}}}
\newcommand{\Dom}{\mathfrak{D}}
\newcommand{\supp}{\mathop{\mathrm{supp}}\nolimits}
\newcommand{\sgn}{\mathop{\mathrm{sgn}}\nolimits}
\newcommand{\esssup}{\mathop{\mathrm{ess\;\!sup}}}
\newcommand{\essinf}{\mathop{\mathrm{ess\;\!inf}}}
\newtheorem{claim}{Claim}[section]
\newtheorem{theorem}[claim]{Theorem}
\newtheorem{lemma}[claim]{Lemma}
\newtheorem{proposition}[claim]{Proposition}
\newtheorem{corollary}[claim]{Corollary}
\theoremstyle{definition}
\newtheorem{remark}[claim]{Remark}
\begin{document}


\title{\bf Spectral analysis of a quantum system
with a double line singular interaction}
\author{Sylwia Kondej$\,^a$ \ and \ David Krej\v{c}i\v{r}\'{\i}k$\,^b$}
\date{
\small \emph{
\begin{quote}
\begin{itemize}
\item[$a)$]
Institute of Physics, University of Zielona G\'ora, ul.\ Szafrana
4a, 65246 Zielona G\'ora, Poland; skondej@proton.if.uz.zgora.pl
\item[$b)$]
Department of Theoretical Physics, Nuclear Physics Institute ASCR,
25068 \v{R}e\v{z}, Czech Republic; krejcirik@ujf.cas.cz
\end{itemize}
\end{quote}
}
\medskip
4 July 2013}
\maketitle

\begin{abstract}
\noindent We consider a non-relativistic quantum particle
interacting with a singular potential supported by two parallel
straight lines in the plane. We locate the essential spectrum
under the hypothesis that the interaction asymptotically
approaches a constant value and find conditions which guarantee
either the existence of discrete eigenvalues or Hardy-type
inequalities. For a class of our models admitting a mirror
symmetry, we also establish the existence of embedded eigenvalues
and show that they turn into resonances after introducing a small
perturbation.
\end{abstract}

\vfill
\noindent
\begin{center}
\fbox{
\textbf{To appear in:} \
\emph{Publ. RIMS, Kyoto University}
}
\end{center}

\newpage
\section{Introduction}
%
%
The problem we study in this paper belongs to the line of research
often called \emph{singular perturbations of Schr\"odinger
operators}.
Let us consider a non-rel\-a\-tiv\-is\-tic quantum particle confined to a
semiconductor structure $\Sigma\subset \R^3$. Suppose the particle
has a possibility of \emph{tunnelling}, therefore the whole~$\R^3$
forms the configuration space.
On the other hand, if the device~$\Sigma$ is \emph{narrow} in a
sense we can make an idealization and assume that~$\Sigma $ is a
set of lower dimension, for example, a surface, a curve or dots
in~$\R^3$. Consequently we come to the model of quantum systems
with potential interaction supported by a \emph{null set}. The
interaction can vary on~$\Sigma $; let a function $V:\Sigma \to
\R$ denote the potential strength. Then the Hamiltonian of such a
quantum system can be symbolically written as
\begin{equation}\label{eq-formal}
  -\Delta + V \, \delta(\cdot-\Sigma)
  \,,
\end{equation}
where $-\Delta$ denotes the Laplace operator in $L^2 (\R^3)$
and~$\delta$ is the Dirac delta function. In view of singular
interactions with translational symmetry, it makes also sense to
consider one- and two-dimensional analogues of~\eqref{eq-formal}.

There are  a lot of papers devoted to an analysis of the relation
between the geometry of~$\Sigma$ and spectral properties of the
Hamiltonian with delta interactions of \emph{constant} strength,
\cf~\cite{BT, EI, EK1, EK3, EY1, EY2}; we also refer to the
monographs \cite{AGHH,Albeverio-Kurasov} with many references. The
delta-type potentials supported on infinite curves and surfaces
are particularly used for a mathematical modelling of \emph{leaky
quantum wires and graphs} \cite{Exner-leaky,BEH}.

The simplest, known model belonging to the class described
by~(\ref{eq-formal}) is given by~$N$ \emph{quantum dots in one
dimension}, \cf~\cite[Chap.~I.3, II.2]{AGHH}. Then
$\Sigma:=\{x_i\}_{i=0}^{N-1}$ and $V(x_i):=- \alpha_i \in \R$. For
one point interaction (\ie~$N=1$, $\alpha_0=:\alpha$) the system
possesses one negative eigenvalue $-\alpha^2 /4$ if, and only if,
$\alpha$~is positive.
In the case of two point interactions of \emph{equal} strength
(\ie~$N=2$, $\alpha_0=\alpha_1=:\alpha$)
separated by the distance~$2a$, there is one negative eigenvalue~$\xi_0$
if, and only if, $0<\alpha a \leq 1$ or two eigenvalues $\xi_0 < \xi_1$
if, and only if, $\alpha a > 1$.

The problem we discuss in this paper can be considered as a
\emph{generalization of the two quantum dots} in two respects.
First, our model is two-dimensional, with the set~$\Sigma$ being
one-dimensional. Second, the generalized geometry enables us to
consider potentials~$V$ of variable strength. More specifically,
we consider the singular set~$\Sigma$ composed of two infinite
lines
\begin{equation}\label{doublelines}
  \Sigma := \Sigma_- \cup \Sigma_+
  \qquad\mbox{with}\qquad
  \Sigma _{\pm} := \R \times \{\pm a\}
\end{equation}
in~$\R^2$ and
\begin{equation}\label{eq-doublelines}
  V(x) :=
\begin{cases}
  -\alpha + V_+(x)
  &\mbox{if} \quad x\in\Sigma_+
  \,,
  \\
  -\alpha + V_-(x)
  &\mbox{if} \quad x\in\Sigma_-
  \,,
\end{cases}
\end{equation}
with $V_\pm:\Sigma_\pm\to\R$ and $\alpha>0$. In the physical
setting described above, the negative part~$-\alpha$ of~$V$ models
the confinement of the particle to~$\Sigma$, while~$V_\pm$ can be
thought as a perturbation.

Our first aim is to find a \emph{self-adjoint realization} in
$L^2(\R^2)$ of~(\ref{eq-formal}) with
\eqref{doublelines}--\eqref{eq-doublelines}. We will do this by
means of a form-sum method and the resulting operator will be
called $H_{\alpha, V_+, V_- }$. Note that the delta potential in
our model does not vanish at infinity even if~$V_\pm$ do (just
because~$\alpha$ is assumed to be positive). This means that we
may expect that the essential spectrum of $H_{\alpha, V_+, V_- }$
will differ from the spectrum of the free Hamiltonian in~$\R^2$.
In our setting, the role of the \emph{unperturbed Hamiltonian} is
played by $H_{\alpha,0,0}$.

For the unperturbed Hamiltonian, the translational symmetry allows
us to decompose the operator as follows
\begin{equation}\label{eq-decomunpert0}
  H_{\alpha,0,0} \simeq (-\Delta^\R) \otimes 1 +
  1 \otimes (-\Delta_\alpha^\R)
  \qquad\mbox{on}\qquad
  L^2(\R) \otimes L^2(\R)
  \,,
\end{equation}
where $-\Delta^\R$ is the free one-dimensional Hamiltonian and
$-\Delta_\alpha^\R$ governs the aforementioned one-dimensional
system with two points interactions. The non-negative semi-axis
constitutes the spectrum of $-\Delta^\R$. On the other hand, as
was already mentioned, the spectrum of~$-\Delta_\alpha^\R$ takes
the form
$$
  \sigma(-\Delta_\alpha^\R) =
  \sigma_\mathrm{disc}(-\Delta_\alpha^\R) \cup [0,\infty)
  \,,
$$
with the negative eigenvalues
(\cf~Lemma~\ref{le-disc2points} and Figure~\ref{Fig.two})
$$
  \sigma_\mathrm{disc}(-\Delta_\alpha^\R) =
  \begin{cases}
    \{\xi_0 \}
    & \mbox{if} \quad 0< \alpha a \leq 1 \,,
    \\
    \{\xi_0,\xi_1\}
    & \mbox{if} \quad \alpha a >1 \,,
  \end{cases}
$$
(the discrete spectrum is empty in the other situations,
which is excluded here by the assumption $\alpha > 0$).
Recalling the ordering $\xi_0 < \xi_1$ (if the latter exists),
we conclude (irrespectively of the value of~$\alpha a$) with
\begin{equation}\label{spectrum0}
  \sigma(H_{\alpha,0,0})
  = \sigma_\mathrm{ess}(H_{\alpha,0,0})
  = [\xi_0,\infty)
  \,.
\end{equation}

The main results of the paper can be formulated as follows.
\medskip \\
$\bullet$ \emph{Definition of the Hamiltonian and its  resolvent.}
The definition of the Hamiltonian $H_{\alpha , V_+, V_-}$ by means
of the form-sum method is given in Section~\ref{sec-Hamiltonian}.
We also derive a Krein-like formula for the resolvent of
$H_{\alpha , V_+, V_-}$ as a useful tool for further discussion.
\medskip \\
$\bullet$ \emph{Essential spectrum.} In
Section~\ref{sec-essential} we find a weak condition preserving
the stability of the essential spectrum of $H_{\alpha, V_+, V_-}$
with respect to $H_{\alpha, 0, 0}$. We show that if $V_{\pm}$
vanish at infinity then
\begin{equation}\label{EssSpec}
  \sigma _{\mathrm{ess}}(H_{\alpha, V_+, V_-} )
  = \sigma_{\mathrm{ess}}(H_{\alpha, 0, 0})=[\xi_0, \infty )
  \,.
\end{equation}
The strategy of our proof is as follows. Using a Neumann
bracketing argument together with minimax principle, we get $
  \inf
  \sigma _{\mathrm{ess}} (H_{\alpha , V_+, V_-}) \geq \xi_0
$. The opposite inclusion $
  \sigma _{\mathrm{ess}} (H_{\alpha , V_+, V_-})
  \supseteq [\xi_0, \infty)
$ is obtained by means of the Weyl criterion adapted to
sesquilinear forms.
\medskip \\
$\bullet$ \emph{Discrete and embedded eigenvalues.} The point
spectrum is investigated in Section~\ref{sec-discrete}. We show
that the bottom of the spectrum of $H_{\alpha , V_+, V_-}$ starts
below~$\xi_0$ provided that the sum $V_++V_-$ is negative in an
integral sense. Assuming additionally that $V_\pm$ vanish at the
infinity and combining this with the previous result on the
essential spectrum, we therefore obtain a non-trivial property
\begin{equation}\label{DiscSpec}
\sigma_{\mathrm{disc}}(H_{\alpha , V_+, V_-})\neq \emptyset\,.
\end{equation}
The proof is based on finding a suitable test function in the
variational definition of the spectral threshold. We also find
conditions which guarantee the existence of embedded eigenvalues
in the system with mirror symmetry, \ie~$ V_+=V_- $.
\medskip \\
$\bullet$ \emph{Hardy inequalities.} The case of repulsive
singular potentials, \ie~$V_\pm \geq 0$, is studied  in
Section~\ref{sec-Hardy}. In order to quantify how strong the
repulsive character of singular potential is, we derive Hardy-type
inequalities
\begin{equation}\label{eq-hardy}
  H_{\alpha , V_+, V_-} -\xi_0
  \,\geq\, \varrho
\end{equation}
in the form sense, where~$\varrho:\R\to[0,\infty)$ is not
identically zero. The functional inequality~(\ref{eq-hardy}) is
useful in the study of spectral stability of~$H_{\alpha , V_+,
V_-}$; indeed, it determines a class of potentials which can be
added to our system without producing any spectrum below~$\xi_0$.
\medskip \\
$\bullet$  \emph{Resonances.} Finally, in
Section~\ref{sec-resonances} we show that breaking the mirror
symmetry  by introducing a ``perturbant" function $V_p$ on one of
the line, for example
$$
  V_+=V_0+\epsilon V_p
  \,,
  \qquad
  V_-=V_0\,,
$$
leads to resonances. These resonances are localized   near the
original embedded eigenvalues $\nu_k$ appearing when $\epsilon
=0$. Precisely, they are determined by poles of the resolvent
which take the form
$$
z_k =\nu_k + \mu_k (\epsilon)+ i\upsilon _k (\epsilon)
\,,
$$
where $\mu_k (\epsilon) =a_k \epsilon + \mathcal{O}(\epsilon^2) $
with  $a_k$ corresponding to the first order perturbation term and
$\upsilon_k (\epsilon) =b_k \mathcal{O}(\epsilon^2)$ where $b_k
<0$ establishes
 the Fermi golden rule.

\medskip

Let us conclude this introductory section by pointing out some
special notation frequently used throughout the paper. We
abbreviate $L^2 :=L^2(\R^2)$ and $L_\pm^2 :=L^2(\R\times\{\pm
a\})$. We also shortly write $W^{n,2}:=W^{n,2}(\R^2)$ for the
corresponding Sobolev spaces. The inner product and norm in $L^2$
is denoted by $(\cdot, \cdot )$ and $\|\cdot \|$, respectively.
Given a self-adjoint operator~$H$, the symbols $\sigma_{\iota}(H)$
with $\iota \in \{\mathrm{ess},  \mathrm{ac}, \mathrm{sc},
\mathrm{p}, \mathrm{disc}\}$ denote, respectively, the essential,
absolutely continuous, singularly continuous, point and discrete
spectrum of~$H$. We use the symbol~$1$ to denote identity
operators acting in various Hilbert spaces used in the paper.

\section{The Hamiltonian and its resolvent}
\label{sec-Hamiltonian}
%
Let~$V_+$ and~$V_-$ be two real-valued functions from
$L^\infty(\R)$. Given a positive number~$a$, we denote by the same
symbols the functions $V_+ \otimes 1$ and $V_- \otimes 1$ on
$\R\times\{+a\}$ and $\R\times\{-a\}$, respectively. Finally,
let~$\alpha$ be a \emph{positive} constant.

\subsection{The self-adjoint realization of the Hamiltonian}
%
Let us consider the following quadratic form
\begin{align*}
  \mathcal{E}_{\alpha,V_+,V_-}[\psi]
  & := \int_{\R^2} |\nabla\psi|^2
  + \int_{\R\times\{+a\}} (V_+ - \alpha) \, | I_+ \psi|^2
  + \int_{\R\times\{-a\}} (V_- - \alpha) \, |I_- \psi|^2
  \,,
  \\
  \Dom(\mathcal{E}_{\alpha,V_+,V_-})
  & := W^{1,2}
  \,.
\end{align*}
Here~$I_\pm $ are the trace operators
associated with the Sobolev embedding $W^{1,2} \hookrightarrow
L^2_\pm$. The corresponding sesquilinear form
will be denoted by
$\mathcal{E}_{\alpha,V_+,V_-}(\cdot,\cdot)$.

The form~$\mathcal{E}_{\alpha,V_+,V_-}$ is clearly densely defined
and symmetric. Moreover, the boundary integrals can be shown to be
a relatively bounded perturbation of the form
$\mathcal{E}_{0,0,0}$ with the relative bound less than~$1$. This
is a consequence of the boundedness of~$V_\pm$ and the following
result.
\begin{lemma}\label{Lem.Sobolev}
For every $\psi \in W^{1,2}$ and $\epsilon\in(0,1)$, we have
\begin{equation}\label{Sobolev}
  \| I_\pm \psi\|_{L_\pm^2}^2
  \leq \frac{1}{\epsilon} \, \|\psi\|_{L^2}^2
  + \epsilon \, \|\partial_2\psi\|_{L^2}^2
  \,.
\end{equation}
\end{lemma}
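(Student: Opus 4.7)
The plan is to reduce the inequality to a pointwise (in the $x_1$-variable) one-dimensional trace estimate and then integrate. By a standard density argument, it suffices to establish \eqref{Sobolev} for $\psi \in C_0^\infty(\R^2)$ and then extend by continuity, since $C_0^\infty(\R^2)$ is dense in $W^{1,2}$ and both sides of \eqref{Sobolev} are continuous with respect to the $W^{1,2}$-norm.

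Fix $\psi \in C_0^\infty(\R^2)$ and focus on the trace on $\R\times\{+a\}$ (the other case is identical). For each $x_1 \in \R$, I would use the fundamental theorem of calculus in the second variable, writing
\begin{equation*}
  |\psi(x_1,a)|^2
  = -\int_a^{\infty} \partial_2 |\psi(x_1,x_2)|^2 \, \D x_2
  = -2\,\mathrm{Re}\int_a^{\infty} \overline{\psi(x_1,x_2)}\,\partial_2\psi(x_1,x_2) \, \D x_2,
\end{equation*}
which is legitimate because $\psi$ has compact support. Extending the integration to all of $\R$ (the integrand has larger absolute value there) and invoking the elementary Young inequality $2|ab|\leq \epsilon^{-1}|a|^2+\epsilon|b|^2$, I get
\begin{equation*}
  |\psi(x_1,a)|^2
  \,\leq\, \int_\R \!\Big(\tfrac{1}{\epsilon}|\psi(x_1,x_2)|^2 + \epsilon\,|\partial_2\psi(x_1,x_2)|^2\Big) \D x_2.
\end{equation*}

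Integrating this inequality over $x_1\in\R$ and applying Fubini's theorem yields exactly \eqref{Sobolev} for the $+$ sign on smooth compactly supported functions; the $-$ case follows by the symmetric argument starting from $-\int_{-\infty}^{-a}\partial_2|\psi|^2$. Finally, by density, the inequality extends to every $\psi \in W^{1,2}$, with $I_\pm\psi$ understood as the trace of $\psi$ on $\Sigma_\pm$.

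There is no serious obstacle here: the statement is a variant of the classical Sobolev trace inequality with explicit $\epsilon$-dependence, and the only subtle point is keeping track of the multiplicative constants so that the coefficients $1/\epsilon$ and $\epsilon$ come out correctly. The appearance of $\|\partial_2\psi\|_{L^2}^2$ (rather than the full gradient) on the right-hand side is essential for later applications and is automatic from the one-dimensional argument above, since only differentiation in the direction transverse to the line $\Sigma_\pm$ enters.
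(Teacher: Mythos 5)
Your proof is correct and follows essentially the same route as the paper's: the fundamental theorem of calculus in the transverse variable, Young's inequality with parameter $\epsilon$, integration over $x_1$, and extension by density. The only cosmetic difference is that you integrate over $(a,\infty)$ rather than $(-\infty,a)$ for the $+$ trace, which is an equivalent choice.
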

\begin{proof}
For every $\epsilon\in(0,1)$ and $\psi \in C_0^\infty(\R^2)$, we
have the bound
\begin{align*}
  |\psi(x_1,\pm a)|^2
  &= \int_{-\infty}^{\pm a} \partial_2|\psi|^2(x_1,x_2) \, dx_2
  = 2 \, \Re \int_{-\infty}^{\pm a} \overline{\psi(x_1,x_2)} \,
  \partial_2\psi(x_1,x_2) \,  dx_2
  \\
  & \leq \frac{1}{\epsilon} \int_\R |\psi(x_1,x_2)|^2 \, dx_2
  + \epsilon \int_\R |\partial_2\psi(x_1,x_2)|^2 \, dx_2
  \,.
\end{align*}
Integrating over~$x_1$, we therefore get~\eqref{Sobolev} for $\psi
\in C_0^\infty(\R^2)$. By density, the obtained inequality extends
to $W^{1,2}$.
\end{proof}
\begin{remark}[Relation to the generalized Kato class]
Inequality~\eqref{Sobolev} represents a quantification of the
embedding $W^{1,2} \hookrightarrow L_\pm^2$. If the support of the
singular potential has a more complicated geometry we can derive a
generalization of~(\ref{Sobolev}). Consider a Radon measure~$\mu$
on~$\R^2$ with a support on a $C^1$ curve (finite or infinite)
without self-intersections and ``near-self-intersections''
(see \cite[Sec.~4]{BEKS} for precise assumptions).
Such a measure belongs to the generalized Kato class
(\cf~\cite[Thm.~4.1]{BEKS})
and, consequently, for any $a>0$ there exists $b>0$
such that
$$
  \int_{\R^2 }|\psi|^2 \, \mathrm{d}\mu
  \leq b \, \|\psi\|_{L^2}^2
  + a\, \|\nabla \psi\|_{L^2}^2
$$
for every $\psi \in W^{1,2}$.
\end{remark}

Since~$\mathcal{E}_{0,0,0}$ is clearly closed and non-negative (it
is in fact associated with the free Hamiltonian in~$\R^2$), it
follows by the KLMN theorem \cite[Thm.~X.17]{RS2} with help of
Lemma~\ref{Lem.Sobolev} that~$\mathcal{E}_{\alpha,V_+,V_-}$ is
closed and bounded from below. Consequently, there exists a unique
bounded-from-below self-adjoint operator~$H_{\alpha,V_+,V_-}$ in
$L^2$ which is associated with~$\mathcal{E}_{\alpha,V_+,V_-}$.
(Notice that the sign of~$\alpha$ plays no role in the definition
of~$H_{\alpha,V_+,V_-}$.)

Finally, let us note that~$H_{\alpha,V_+,V_-}$ is indeed a natural
realization of the formal expression~(\ref{eq-formal}) with
\eqref{doublelines}--\eqref{eq-doublelines}. As a matter of fact,
our form~$\mathcal{E}_{\alpha,V_+,V_-}$ represents a closed
extension of the form associated with the
expression~(\ref{eq-formal}) initially considered as acting on
smooth functions rapidly decaying at the infinity of~$\R^2$.

\subsection{The transverse Hamiltonian}
Recall the decomposition~\eqref{eq-decomunpert0} for the
unperturbed Hamiltonian~$H_{\alpha,0,0}$. Here the ``transverse''
operator $-\Delta_\alpha^\R$ is associated with the form
\begin{equation*}
  \varepsilon_{\alpha}[\phi]
  := \int_{\R} |\phi'|^2
  - \alpha \, |\phi(+a)|^2
  - \alpha \, |\phi(-a)|^2
  \,, \qquad
  \Dom(\varepsilon_{\alpha})
  := W^{1,2}(\R)
  \,.
\end{equation*}
Note that the boundary values have a good meaning
in view of the embedding $W^{1,2}(\R) \hookrightarrow C^0(\R)$.
It is easy to verify
that $\Dom(-\Delta_\alpha^\R)$ consists of functions
$
  \psi \in W^{1,2}(\R) \cap W^{2,2}(\R\setminus\{-a,+a\})
$
satisfying the interface conditions
\begin{equation}\label{interface}
  \psi '(\pm a+0 )- \psi' (\pm a-0) =- \alpha \psi (\pm a)\,,
\end{equation}
where $\psi'(a\pm 0):=\lim_{\epsilon \to 0^{+}} \psi'(a\pm \epsilon)$,
and that $-\Delta_\alpha^\R\psi=-\psi''$
on $\R\setminus\{-a,+a\}$.
An alternative way of introducing $-\Delta_\alpha^\R$
is via the extension theory~\cite[Chap.~II.2]{AGHH}.

Due to \cite[Thm.~2.1.3]{AGHH}, we have $\sigma
_{\mathrm{ess}}(-\Delta_\alpha^\R)= \sigma
_{\mathrm{ac}}(-\Delta_\alpha^\R)=[0, \infty )$.
The structure of discrete spectrum depends
on the strength of~$\alpha a$.
\begin{lemma}\label{le-disc2points}
Operator $-\Delta_\alpha^\R$ has exactly two negative eigenvalues
if $\alpha a >1$ and exactly one negative eigenvalue if $0<\alpha
a \leq 1$.
\end{lemma}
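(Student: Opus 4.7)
The plan is to reduce the eigenvalue problem to a pair of scalar transcendental equations by exploiting the reflection symmetry $x\mapsto -x$, which clearly commutes with $-\Delta_\alpha^\R$. Accordingly I would split $L^2(\R)$ into the orthogonal direct sum of even and odd functions, each subspace being invariant under $-\Delta_\alpha^\R$; the discrete spectrum is then the disjoint union of the discrete spectra of the two parts and the counts can be performed separately.

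For a candidate eigenvalue $-\kappa^2$ with $\kappa>0$, any $\psi\in\Dom(-\Delta_\alpha^\R)$ with $-\Delta_\alpha^\R \psi = -\kappa^2\psi$ must satisfy $\psi''=\kappa^2\psi$ on each of the three intervals $(-\infty,-a)$, $(-a,a)$, $(a,\infty)$, be continuous on $\R$, decay at infinity (in order to lie in $L^2$), and fulfil the interface condition~\eqref{interface} at $\pm a$. In the even sector I would take the Ansatz $\psi(x)=\cosh(\kappa x)$ on $(-a,a)$ matched by $\cosh(\kappa a)\,e^{-\kappa(|x|-a)}$ outside, and in the odd sector the analogous one with $\sinh$ in place of $\cosh$. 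Substituting into~\eqref{interface} at $x=a$ (the condition at $x=-a$ is automatic by parity) and simplifying, the even sector produces the equation
\begin{equation*}
  2\kappa \;=\; \alpha\bigl(1+e^{-2\kappa a}\bigr),
\end{equation*}
while the odd sector produces
\begin{equation*}
  2\kappa \;=\; \alpha\bigl(1-e^{-2\kappa a}\bigr).
\end{equation*}

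The remaining task is an elementary analysis of these two equations on $(0,\infty)$. For the even equation the right-hand side is strictly decreasing from $2\alpha$ to $\alpha$ while the left-hand side is strictly increasing from $0$ to $\infty$; since $\alpha>0$, intermediate value arguments yield exactly one positive solution, giving a unique eigenvalue in the even sector for every $\alpha>0$. For the odd equation the right-hand side increases from $0$ to $\alpha$ with slope $2\alpha a$ at the origin, so comparing with the left-hand side (linear with slope $2$) and using strict convexity of the exponential, one checks that a positive solution exists if and only if $2\alpha a>2$, i.e.\ $\alpha a>1$, and is then unique. Combining the two counts gives exactly one eigenvalue for $0<\alpha a\le 1$ and exactly two eigenvalues for $\alpha a>1$.

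The only delicate point I expect is the monotonicity/convexity check for the odd equation near $\kappa=0$ to pin down the threshold $\alpha a=1$ sharply (excluding a spurious solution at the critical value, where the only candidate $\kappa=0$ fails the $L^2$-condition); this is a short calculus argument. Alternatively, one can simply invoke \cite[Chap.~II.2]{AGHH}, where exactly this spectral picture for the two-centre one-dimensional point interaction is worked out in detail.
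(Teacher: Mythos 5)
Your proposal is correct, and it reaches the same two transcendental equations the paper analyzes, but via a different organizing principle. The paper's proof does not invoke the parity decomposition; it solves the piecewise ODE directly with a general decaying Ansatz and arrives at the single condition $\frac{\alpha^2}{4}e^{-4\kappa a}=(\kappa-\frac{\alpha}{2})^2$, rewritten as $g_1(\kappa)=g_2(\kappa)$ with $g_1(\kappa)=\frac{\alpha}{2}e^{-2\kappa a}$ and $g_2(\kappa)=|\kappa-\frac{\alpha}{2}|$, and then counts intersections by monotonicity on $(\alpha/2,\infty)$ and by convexity of $g_1$ on $(0,\alpha/2)$. Your approach exploits the reflection symmetry at the outset, splitting $L^2(\R)$ into even and odd sectors, and obtains the two equations $2\kappa=\alpha(1\pm e^{-2\kappa a})$ separately; these are exactly the two branches of the paper's absolute value $|\kappa-\frac{\alpha}{2}|$, with $\kappa>\alpha/2$ being the even sector and $\kappa<\alpha/2$ the odd one. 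The symmetry route is slightly more structural: it explains for free why $\phi_0$ is even and $\phi_1$ is odd (a fact the paper states afterwards as a remark), and it keeps the two counting problems disentangled. The paper's route is marginally shorter in that it avoids writing down the parity Ansatz and handles both sectors at once in the factored equation. Your closing remark about $\kappa=0$ failing to give an $L^2$ eigenfunction at the threshold $\alpha a=1$ is the right way to justify why that boundary case contributes no eigenvalue, and the convexity argument you sketch for the odd equation (strict concavity of $\kappa\mapsto\alpha(1-e^{-2\kappa a})$ against the line $2\kappa$) does pin down the threshold sharply; both points are sound and adequately cover the "delicate point" you flag.
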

\begin{proof}
Let~$\kappa>0$.
Solving the eigenvalue problem $-\psi''=-\kappa^2\psi$
on $\R\setminus\{-a,+a\}$
in terms of exponential functions decaying at infinity
and using the continuity of~$\psi$ and~\eqref{interface},
it is straightforward to see that the algebraic equation
\begin{equation}\label{eq-discev}
\frac{\alpha ^2}{4}\,\mathrm{e}^{-4\kappa a}= \left(\kappa
-\frac{\alpha }{2}\right) ^2
\end{equation}
represents a sufficient and necessary condition
for $-\kappa ^2 \in \sigma_{\mathrm{p}}(-\Delta_\alpha^\R )$.
(Alternatively, one could use directly \cite[Eq.~(2.1.33)]{AGHH}.)
Equation~\eqref{eq-discev} is equivalent to
$g_1(\kappa ) -g_2(\kappa)=0$ with
\begin{equation}\label{eq-g}
g_1(\kappa ):= \frac{\alpha }{2} \e ^{-2\kappa a }\,\quad
\mathrm{and}\quad g_2(\kappa ):= \left|\kappa -\frac{\alpha
}{2}\right|\,.
\end{equation}
Since the exponential function~$g_1$ is positive monotonously decreasing
and~$g_2$ is monotonously increasing for
$\kappa \in (\alpha/2 \,,\infty )$
with $g_2 (\alpha/2)=0$,
there exists exactly one solution $\kappa_0>\alpha/2$
of~(\ref{eq-discev}) for any $\alpha >0$.
Since~$g_1$ is strictly convex, $g_1 (0)=g_2 (0)$
and the graph of~$g_2$ is a straight line
for $\kappa \in (0,\alpha/2)$ with $g_2'(0)=-1$,
the existence of another (at most one) solution $\kappa_1 \in (0,\alpha/2)$
is determined by the derivative of~$g_1$ at~$0$.
Obviously, $g_1'(0) \geq -1$ if, and only if, $\alpha a \leq 1$.
We set $\xi_0:=-\kappa_0^2$ and $\xi_1:=-\kappa_1^2$
(if the latter exists).
\end{proof}
\begin{figure}[h!]
\begin{center}
\includegraphics[width=0.8\textwidth]{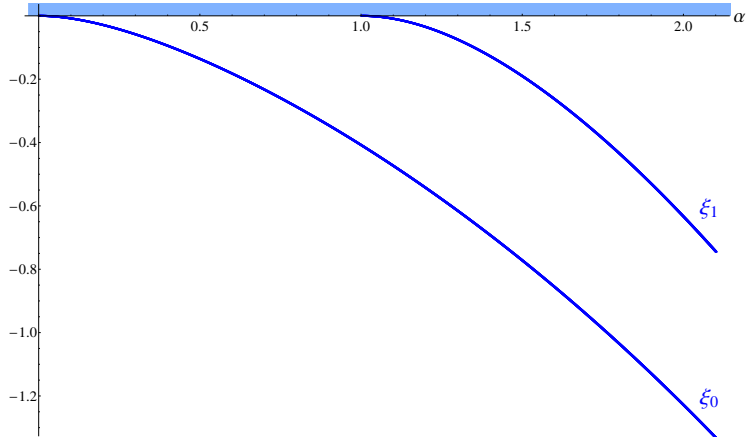}
\end{center}
\caption{Emergence of the discrete eigenvalues $\xi_0$, $\xi_1$
from the essential spectrum 
as $\alpha$ increases for $a=1$.}
\label{Fig.two}
\end{figure}

We refer to Figure~\ref{Fig.two} for the dependence of the
discrete spectrum on~$\alpha a$. The eigenfunctions corresponding
to~$\xi_0$ and~$\xi_1$ will be denoted by~$\phi_0$ and~$\phi_1$,
respectively. For notational purposes, it will become convenient
to introduce the index set
\begin{equation}\label{eq-defN}
 \mathcal{N}:=
 \begin{cases}
   \{0 \}
    & \mbox{if} \quad \alpha a \leq 1 \,,
    \\
    \{0\,,1\}
    & \mbox{if} \quad \alpha a >1 \,.
    \end{cases}
\end{equation}
Due to the symmetry of the system,
$\phi_0$~is even and $\phi_1$~is odd.
Moreover, $\phi_0$~can be chosen positive,
\cf~\cite[Thm.~2.1.3]{AGHH}.

\subsection{A lower-bound Hamiltonian}
%
In this subsection we derive an auxiliary result we shall use
several times later on. It is based on an idea used in a similar
context in~\cite{FK3}.

Taking into account the structure
of~$\mathcal{E}_{\alpha,V_+,V_-}$, let us define
\begin{equation}\label{lambda.tilde}
  \tilde\lambda(v_+,v_-) :=
  \inf_{\phi\in W^{1,2}(\R)\setminus\{0\}}
  \frac{\varepsilon_\alpha[\phi]
  + v_+ |\phi(+a)|^2 + v_- |\phi(-a)|^2}
  {\|\phi\|_{L^2(\R)}^2}
  - \xi_0
\end{equation}
for any real constants~$v_+$ and~$v_-$. The number
$\tilde\lambda(v_+,v_-)$ is the lowest eigenvalue of the
operator~\eqref{eq-formal} on $L^2(\R)$ shifted by~$\xi_0$,
subject to two point interactions of strength $v_+-\alpha$ and
$v_--\alpha$ separated by the distance~$2a$.

It is clear that~$\tilde{\lambda}(v_+,v_-)$ is a continuous and
monotonous non-decreasing function of both~$v_+$ and~$v_-$. The
symmetry relation
$\tilde{\lambda}(v_+,v_-)=\tilde{\lambda}(v_-,v_+)$ holds true.
Assume that both $v_\pm $ are non-negative. Then by the
variational definition of~$\xi_0$, we have
$\tilde{\lambda}(v_+,v_-) \geq 0$
and $\tilde{\lambda}(0,0)=0$.
For our purposes, it is important to
point out that $\tilde{\lambda}(v_+,v_-)$ is positive whenever at
least one of the arguments is. Indeed, if
$\tilde{\lambda}(v_+,v_-)=0$ and $v_+>0$ or $v_->0$, then we get
from~\eqref{lambda.tilde} that the minimum is achieved by the
eigenfunction~$\phi_0$ of $-\Delta_\alpha^\R$ corresponding
to~$\xi_0$ and that $\phi_0(a)=0$ or $\phi_0(-a)=0$. Since
$\phi_0$ is positive the latter leads to the contradiction.

If~$V_+$ and~$V_-$ are real-valued functions,
then~$\tilde{\lambda}$ gives rise to a function
$\lambda:\R^2\to\R$ via setting
\begin{equation}\label{lambda}
  \lambda(x) := \tilde\lambda\big(V_+(x_1),V_-(x_1)\big)
  \,.
\end{equation}
It follows from the properties of~$\tilde{\lambda}$ that
if~$V_\pm$ are non-negative and~$V_-$ or~$V_+$ is non-trivial
(\emph{i.e.}, non-zero on a measurable set of positive Lebesgue
measure), then~$\lambda$ is a non-trivial non-negative function.

In any case, using Fubini's theorem, we get a fundamental lower
bound for our form~$\mathcal{E}_{\alpha,V_+,V_-}$.
\begin{lemma}\label{Lem.lower}
For every $\psi \in W^{1,2}$, we have
\begin{equation}\label{pre.Hardy}
  \mathcal{E}_{\alpha,V_+,V_-}[\psi] - \xi_0 \, \|\psi\|_{L^2 }^2
  \geq \int_{\R^2} |\partial_1\psi|^2 + \int_{\R^2} \lambda \, |\psi|^2
  \,.
\end{equation}
\end{lemma}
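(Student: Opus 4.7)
The plan is to reduce the two-dimensional inequality to a fibered family of one-dimensional variational inequalities, each controlled by $\tilde\lambda$, and then recombine via Fubini's theorem.

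First, I would decompose $|\nabla\psi|^2 = |\partial_1\psi|^2 + |\partial_2\psi|^2$ and rewrite
\[
  \mathcal{E}_{\alpha,V_+,V_-}[\psi] - \int_{\R^2}|\partial_1\psi|^2 \,=\, \int_{\R} F(x_1)\, dx_1,
\]
where
\[
  F(x_1) := \int_{\R}|\partial_2\psi(x_1,x_2)|^2\,dx_2 + (V_+(x_1)-\alpha)|I_+\psi(x_1)|^2 + (V_-(x_1)-\alpha)|I_-\psi(x_1)|^2.
\]
The Fubini-type characterization of $W^{1,2}(\R^2)$ ensures that for a.e.\ $x_1\in\R$ the transverse slice $\psi_{x_1}:=\psi(x_1,\cdot)$ belongs to $W^{1,2}(\R)$, so that $\psi_{x_1}$ has a continuous representative by the one-dimensional Sobolev embedding $W^{1,2}(\R)\hookrightarrow C^0(\R)$, and this representative's values at $\pm a$ coincide with the traces $I_\pm\psi(x_1)$. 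Therefore, for a.e.\ $x_1$,
\[
  F(x_1) = \varepsilon_\alpha[\psi_{x_1}] + V_+(x_1)|\psi_{x_1}(+a)|^2 + V_-(x_1)|\psi_{x_1}(-a)|^2.
\]

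Second, applying the variational definition \eqref{lambda.tilde} pointwise in $x_1$ gives
\[
  F(x_1) \,\geq\, \bigl(\tilde\lambda(V_+(x_1),V_-(x_1)) + \xi_0\bigr)\,\|\psi_{x_1}\|_{L^2(\R)}^2 \,=\, \xi_0\,\|\psi_{x_1}\|_{L^2(\R)}^2 + \int_{\R}\lambda(x_1,x_2)|\psi(x_1,x_2)|^2\,dx_2,
\]
where in the last equality I used that by definition \eqref{lambda} the function $\lambda$ depends on $x_1$ only. Integrating in $x_1$, adding $\int_{\R^2}|\partial_1\psi|^2$, and subtracting $\xi_0\|\psi\|_{L^2}^2$ yields \eqref{pre.Hardy}.

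The main (and essentially only) obstacle is the rigorous identification, for a.e.\ $x_1$, of the two-dimensional trace $I_\pm\psi(x_1)$ with the pointwise value $\psi_{x_1}(\pm a)$ of the continuous representative of the one-dimensional slice; this is a standard measurable-slicing argument for Sobolev functions, provable by first establishing it on $C_0^\infty(\R^2)$ and then passing to the limit via the continuity of both traces in the $W^{1,2}$-norm, jointly with Lemma~\ref{Lem.Sobolev}. Everything else is Fubini's theorem combined with the already established variational properties of $\tilde\lambda$.
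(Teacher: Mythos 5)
Your proof is correct and follows exactly the route the paper indicates (the text preceding the lemma states only ``using Fubini's theorem,'' with no further details given). Slicing into $x_1$-fibers, identifying the trace $I_\pm\psi(x_1)$ with the pointwise value $\psi_{x_1}(\pm a)$ of the continuous one-dimensional representative, applying the variational definition \eqref{lambda.tilde} fiberwise, and reassembling via Fubini is precisely the intended argument.
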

The above result shows that $H_{\alpha,V_+,V_-}-\xi_0$ is bounded
from below by the one-dimensional Schr\"o\-dinger operator
$$
  (-\Delta^\R + \lambda) \otimes 1
  \qquad\mbox{on}\qquad
  L^2(\R) \otimes L^2(\R)
  \,.
$$

\subsection{The Krein-like resolvent formula}\label{sec-resolvent}
%
The first auxiliary step is to reconstruct the resolvent of the
unperturbed Hamiltonian, \ie\ $R_\alpha (z ):=(H_{\alpha, 0, 0}
-z)^{-1}$ for $z\in \C \setminus [\xi_0 , \infty )$. Once we have
$R_\alpha (z)$ we introduce potentials $V_\pm $ and build up the
Krein-like resolvent of $H_{\alpha , V_{+}, V_-}$ as a
perturbation of $R_\alpha (\cdot)$. For this aim we will follow
the treatment derived by Posilicano~\cite{Po}.

\subsubsection{The resolvent of the unperturbed Hamiltonian}
As above, $\xi_j$, with $j\in \mathcal{N}$, stand for the discrete
eigenvalues of $-\Delta ^\R _\alpha $ and~$\phi_j$ denote the
corresponding eigenfunctions. Recall that the essential spectrum
of $-\Delta^\R _\alpha $ is purely absolutely continuous. Let
$E_\alpha ^{\R }(\cdot )$ stand for the spectral resolution of
$-\Delta ^\R _\alpha $ corresponding to the continuous spectrum
and $P_{\alpha, j}$, with $j\in \mathcal{N}$,  denote the
eigen-projectors $P_{\alpha ,j} =\phi_j(\phi_j, \cdot )$.
Analogously, $E^\R (\cdot )$ denotes the spectral resolution of
$-\Delta^{\R}$. Using~(\ref{eq-decomunpert0}), one gets
$$
  R_\alpha (z )=\sum_{j\in \mathcal{N}}  \int_{\R_+} (\beta +\xi_j  -z
)^{-1}\mathrm{d}E^\R (\beta ) \otimes P_{\alpha, j} +
\int_{\R^2_{+}}(\beta +\beta ' -z )^{-1}\mathrm{d}E^\R (\beta )
\otimes \mathrm{d}E ^\R_\alpha (\beta ')\,,
$$
where $z\in \C \setminus [\xi_0 \,, \infty)$. This implies the
decomposition
$$
R_\alpha (z )=R_\alpha ^d (z )+R_\alpha ^c (z)\,,
$$
where $R_\alpha^\iota (z )$, with $\iota\in\{d,c\}$,
act on a separated variable function
$f(x)=f_1 (x_1) f_2 (x_2)$ as
\begin{align*} \nonumber 
R_\alpha ^d (z)f (x) &= \sum_{j\in \mathcal{N}}
\frac{1}{\sqrt{2\pi}} \int_{\R}\mathrm{d}p_1 \,
 \frac{\widehat{f}_1 (p_1) \e ^{ip_1 x_1 }}{p_1^2 +\xi_j -z}
\phi_j (x_2)  ( \phi_j , f_2)_{L^2 (\R)}\,,
\\ 
R_\alpha ^c (z)f  &= \int_{\R^2_+}\frac{1}{\beta +\beta' -z
}\mathrm{d} E^{\R} (\beta )f_1\otimes \mathrm{d}E_\alpha ^{\R
}(\beta ')f_2\,,
\end{align*}
with $\widehat{f_1}$~denoting the Fourier transform of~$f_1$. Set
$\tau _j^2 (z):=z- \xi_j $. In the following we assume that $z$ is
taken from the first  sheet of the domain of function $z\mapsto
\tau_j (z)$, \ie\ $\Im \tau_j (z)>0$.

Using the standard representation of the Green function of the
one-di\-men\-sion\-al Laplace operator
$$
(-\Delta^{\R}-k^2)^{-1}(x_1,y_1) =  \frac{1}{2\pi}\int_\R \frac{\e
^{ip (x_1-y_1)}}{p^2-k^2} \, \mathrm{d}p =\frac{i}{2} \frac{\e^{ik
|x_1-y_1|}}{k}\,
$$
we conclude that $R_\alpha ^d (z):= \sum _{j\in
\mathcal{N}} R_\alpha ^{j,d} (z)$ where $R_\alpha
^{j,d} (z)$ are integral operators with the  kernels
%
\begin{align} \label{eq-Rd}
G_\alpha ^{j,d} (z; x,y )= & 
\frac{i}{2}  \frac{\e ^{i \tau _j(z) |x_1 -y_1|}}{\tau_j (z ) }
\phi_j (x_2) \overline{\phi_j (y_2)}\,.
\end{align}
%
%
Moreover
\begin{equation}\label{eq-Rc}
  R^c _\alpha (z)f = \frac{i}{2} \int_{\R^2} \int_\R
 \frac{\e^{i\tau (p, z)|x_1-y_1|}}{\tau (p, z)}
\psi (p\,, x_2)\overline{\psi (p\,, y_2)}f_1(y_1)f_2(y_2)\,
\mathrm{d}y_1\mathrm{d}y_2\mathrm{d}p\,,
\end{equation}
where $\tau ^2(p, z)=z-p^2$, $\Im \tau (p, z)>0$ and $\psi
(p\,,x_2)$ stand for the generalized eigenfunctions of $-\Delta
^\R _\alpha $ discussed in \cite[Chap.~II.2.4]{AGHH}. To be fully
specific $\psi (p\,,x_2)$  can be obtained from Eq.~(2.4.1) of
\cite[Chap.~II]{AGHH} multiplying it by the factor $\frac{1}{2\pi
}$, see also \cite[Appendix~E]{AGHH}, Eq.~(E.5).

The resolvent of $H_{\alpha, 0, 0}$ can be written also in a
Krein-like form. We start with the resolvent of the free system $
R_{0}(z)=(H_{0,0,0} -z)^{-1} $, $z\in \C\setminus [0, \infty )$.
By means of the embeddings $ I_{\pm }\,:\, W^{1,2 }
\hookrightarrow L_\pm^{2}$ and  theirs adjoints $ I^\ast _{\pm
}\,:\, L_\pm^{2} \hookrightarrow W^{-1,2 } $, we define
$$
\hat{R}_{0 , \pm}(z) := I_{\pm} R_0 (z)\,: L^2 \to L_\pm^2
\,,\quad \check{R}_{0 , \pm}(z) :=  R_0 (z) I^\ast _{\pm}\,:
L_\pm^2 \to  L^2.
$$
 Finally, the ``bilateral'' embeddings take
the forms
$$
\mathrm{R}_{0 , ij  }(z):=I _{i} R_0 (z) I^\ast _{j} \,:\, L^2
(\R\times \{j a \}) \to L^2 (\R\times \{i a \}) \,, \qquad i,j \in
\{+ \,,-\} \,.
$$
Let $\Gamma _{0,\alpha }(z)$ denote the operator-valued matrix
acting in $L^2_+\oplus L^2_-$ and taking the form
$$
  \Gamma _{0,\alpha }(z ) :=
\begin{pmatrix}
  -\alpha^{-1}  \,,
  & 0
  \\
  0  \,,
  & -\alpha^{-1}
\end{pmatrix}
+
\begin{pmatrix}
 \mathrm{R}_{0 ,++}(z)  \,,
  & \mathrm{R}_{0 , +-}(z)
  \\
  \mathrm{R}_{0 ,-+}(z)  \,,
  & \mathrm{R}_{0 , --}(z)
\end{pmatrix}\,.
$$

Note that the continuity of $I_i \,:\,W^{1,2} \hookrightarrow
L^2_{i}$, $i \in\{ +\,,-\}$ implies the continuity of the adjoint
embedding $ I_i^\ast \, :\,L^2_{i}\hookrightarrow W^{-1,2}$.
Therefore
\begin{equation}\label{eq-aux1}
  \mathrm{Ran} \, \check{R}_{0,i } (z) \subset W^{1,2}\,.
\end{equation}
Moreover, since $\mathrm{Ran}\,I_i ^\ast  \cap L^2 =\{0\}$, we
obtain
\begin{equation}\label{eq-aux2}
\mathrm{Ran} \, \check{R}_{0,i } (z) \cap W^{2,2} =\{0\}\,.
\end{equation}
\begin{theorem} \label{th-resolvent0}
Let $z\in \C \setminus [\xi_0 , \infty )$ be such that the
operator $\Gamma _{0,\alpha }(z)$ is invertible with bounded
inverse. Then the resolvent $R_\alpha (z)$ is given by
\begin{equation}\label{eq-resolupert-ext}
R_\alpha(z)
  = R_{0 } (z)  - \sum_{i,j \in\{+, -\}}
  \check{R}_{0,i } (z)
\Gamma _{0,\alpha }(z)_{ij}^{-1} \hat{R}_{0, j} (z )\,,
\end{equation}
where $\Gamma _{0,\alpha }(z)_{ij}^{-1}$ are the matrix elements
of the inverse of $\Gamma_ {0,\alpha }(z)$.
\end{theorem}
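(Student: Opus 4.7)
My plan is to follow Posilicano's abstract recipe~\cite{Po} and verify that the operator defined by the right-hand side of~\eqref{eq-resolupert-ext}, call it $\widetilde{R}(z)$, coincides with $(H_{\alpha,0,0}-z)^{-1}$. Since $z\in\C\setminus[\xi_0,\infty)$ lies in the resolvent set of $H_{\alpha,0,0}$ by~\eqref{spectrum0}, this will be achieved as soon as we show that for every $f\in L^2$ the vector $\widetilde{R}(z) f$ lies in the form domain $W^{1,2}=\Dom(\mathcal{E}_{\alpha,0,0})$ and satisfies
\[
\mathcal{E}_{\alpha,0,0}(\varphi,\widetilde{R}(z) f) - z\,(\varphi,\widetilde{R}(z) f) = (\varphi,f)
\]
for every $\varphi \in W^{1,2}$. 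The mapping property $\widetilde{R}(z)\colon L^2\to W^{1,2}$ is immediate from $R_0(z)\colon L^2\to W^{2,2}\subset W^{1,2}$ together with the range inclusion~\eqref{eq-aux1}.

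The form identity is obtained by splitting $\widetilde{R}(z) f$ into the regular piece $R_0(z) f$ and the singular correction $-\sum_i \check{R}_{0,i}(z)\,\eta_i$, where I abbreviate $\eta_i := \sum_j \Gamma_{0,\alpha}(z)^{-1}_{ij}\,\hat{R}_{0,j}(z) f$. For the regular piece, the standard resolvent identity $(H_{0,0,0}-z) R_0(z) f = f$ in $L^2$ yields
\[
\mathcal{E}_{\alpha,0,0}(\varphi, R_0(z) f) - z\,(\varphi, R_0(z) f) = (\varphi, f) - \alpha \sum_{j\in\{+,-\}} (I_j\varphi, \hat{R}_{0,j}(z) f)_{L^2_j}.
\]
For the singular piece I would interpret $\check{R}_{0,i}(z) g = R_0(z) I^*_i g$ as the unique $u \in W^{1,2}$ solving $(H_{0,0,0}-z)u = I^*_i g$ in the duality between $W^{1,2}$ and $W^{-1,2}$, so that
\[
\mathcal{E}_{0,0,0}(\varphi, \check{R}_{0,i}(z) g) - z\,(\varphi, \check{R}_{0,i}(z) g) = (I_i\varphi, g)_{L^2_i};
\]
adding the boundary terms of $\mathcal{E}_{\alpha,0,0}$ and using $I_k\check{R}_{0,i}(z)g = \mathrm{R}_{0,ki}(z) g$ then gives
\[
\mathcal{E}_{\alpha,0,0}(\varphi, \check{R}_{0,i}(z) g) - z\,(\varphi, \check{R}_{0,i}(z) g) = (I_i\varphi, g)_{L^2_i} - \alpha \sum_{k\in\{+,-\}}(I_k\varphi, \mathrm{R}_{0,ki}(z) g)_{L^2_k}.
\]
Assembling the two contributions, the required identity reduces to the coupled condition
\[
\alpha\,\hat{R}_{0,k}(z) f + \eta_k - \alpha \sum_{i\in\{+,-\}} \mathrm{R}_{0,ki}(z)\,\eta_i = 0, \qquad k\in\{+,-\},
\]
which upon dividing by $-\alpha$ is exactly the matrix equation $\Gamma_{0,\alpha}(z)\,\eta = \bigl(\hat{R}_{0,+}(z) f,\hat{R}_{0,-}(z) f\bigr)^{\top}$ that holds by construction of $\eta$. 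Density of the trace image of $W^{1,2}$ in $L^2_\pm$ ensures that the pointwise vanishing of the coefficient of $(I_k\varphi,\cdot)_{L^2_k}$ is equivalent to the full form identity.

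The main obstacle I foresee is the rigorous interpretation of the relation $(H_{0,0,0}-z)\check{R}_{0,i}(z) g = I^*_i g$ used for the singular piece: the vector $\check{R}_{0,i}(z) g$ belongs to $W^{1,2}$ but generically \emph{not} to $W^{2,2}$, as recorded in~\eqref{eq-aux2}, so the equation must be read through the continuous extension $(H_{0,0,0}-z)\colon W^{1,2}\to W^{-1,2}$ rather than in $L^2$. This is precisely the abstract manoeuvre underlying Posilicano's construction; once it is in place, the rest of the argument is the linear-algebraic bookkeeping in the two-dimensional auxiliary space $L^2_+\oplus L^2_-$ performed above.
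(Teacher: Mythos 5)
Your proposal is correct and follows the same approach as the paper's proof: one checks that the operator defined by the right-hand side of~\eqref{eq-resolupert-ext} maps $L^2$ into $W^{1,2}$ and satisfies the form identity $\mathcal{E}_{\alpha,0,0}(\varphi,\widetilde{R}(z)f)-z(\varphi,\widetilde{R}(z)f)=(\varphi,f)$, the key cancellation being the matrix identity $\Gamma_{0,\alpha}(z)\Gamma_{0,\alpha}(z)^{-1}=1$ in $L^2_+\oplus L^2_-$. Your regular/singular splitting of $\widetilde{R}(z)f$ makes the bookkeeping more transparent, and appealing to $z\in\rho(H_{\alpha,0,0})$ via~\eqref{spectrum0} lets you dispense with the paper's separate injectivity argument for the candidate operator, which is a legitimate streamlining of the same underlying idea.
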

\begin{proof}
In view of (\ref{eq-aux1}), operator $R_\alpha (z)$ defined by
(\ref{eq-resolupert-ext}) satisfies $\mathrm{Ran}\, R_{\alpha }
(z)\subset W^{1,2}$.
Assume $\psi \in W^{1,2}$ and $\phi \in L^2$. Using  $
\mathcal{E}_{0, 0, 0}( \psi, R_{0}(z)\phi ) -z ( \psi, R_{0 }(z)
\phi ) =(\psi, \phi)$, we get
\begin{eqnarray} \nonumber
\lefteqn{\mathcal{E}_{\alpha, 0, 0} \big( \psi, R_{\alpha }(z)\phi
\big) -z \big( \psi, R_{\alpha }(z) \phi \big)
 = (\psi , \phi )}
\\
\nonumber && - \sum_{i \in \{+, -\}}\alpha  \big(\psi , \hat{R}_{0
, i }(z)\phi\big)_{L^2 _i}- \sum_{i, j\in \{+, -\}} \big(\psi,
\Gamma _{0,\alpha }(z)_{ij}^{-1} \hat{R}_{0  ,j} (z ) \phi
\big)_{L^2 _i}
\\
\nonumber && +\sum_{i ,j , k\in \{+, -\}} \alpha \big(\psi ,
\mathrm{R}_{0, ij }(z) \Gamma _{0,\alpha } (z)_{jk}^{-1}
\hat{R}_{0  ,k} (z ) \phi \big)_{L^2 _i}\,,
\end{eqnarray}
where we use the embedding $W^{1,2}\hookrightarrow L_i^2$ for
$\psi $ in all expressions with scalar product in $L^2_i$.
Applying the identity
$$
  \big(\psi ,  \hat{R}_{0 , i
}(z)\phi\big)_{L^2 _i}= \sum_{j , k\in \{+, -\}} \big(\psi ,
\Gamma _{0, \alpha }(z)_{ik}\Gamma _{0, \alpha }(z)_{kj} ^{-1}
\hat{R}_{0, j }(z)\phi\big)_{L^2 _i}
$$
and using the explicit form of $\Gamma _{0,\alpha } (z)_{ij} $,
one obtains
\begin{equation}\label{eq-resolcal}
\mathcal{E}_{\alpha,0,0} \big( \psi, R_{\alpha }(z)\phi \big) -z
\big( \psi, R_{\alpha }(z) \phi \big) =(\psi, \phi)\,.
\end{equation}

Moreover, note that $R_{\alpha}(z)$ is invertible. Indeed,
employing (\ref{eq-resolupert-ext}) and
 (\ref{eq-aux2}) we conclude that
any $f\in \mathrm{Ran}\, R_{\alpha } (z)$ has a form of direct sum
$f=f_1 \dot{+} f_2$ where $f_1\in \mathrm{Ran}\, R_{0}(z)$ and
$f_2 \in W^{1,2} \setminus W^{2,2}$. This implies $\mathrm{Ker}\,
R_{\alpha } =\{0\}$ and together with (\ref{eq-resolcal})
completes the proof.
\end{proof}

\subsubsection{The Krein-like formula for the resolvent
of $H_{\alpha, V_+, V_-}$}
Relying on  (\ref{eq-resolupert-ext}) and (\ref{eq-aux1}), we can
determine the analogous embeddings operators as in the previous
section but now instead of $R_0 (z)$ we consider $R_\alpha (z)$.
Namely, define $ \hat{R}_{\alpha , \pm}(z):= I_\pm R_{\alpha }(z)
\,: L^2 \to L_\pm^2 \, $, $\check{R}_{\alpha  , \pm}(z):=
R_{\alpha }(z) I^\ast _\pm  \,:\,L_\pm^2\to L^2 $ and $
\mathrm{R}_{\alpha , ij }(z):=I_i R_{\alpha }(z) I^\ast _j
\,:\,
L^2 _j  \to L^2_i$ where $i,j \in \{+ \,,-\}$.

For any bounded function~$V$, let us set $V^{1/2}:=\sgn(V)
|V|^{1/2}$. Define the operator-valued matrix
$$
  B(z ) :=
\begin{pmatrix}
  |V_+|^{1/2}\mathrm{R}_{\alpha ,++}(z) V_+ ^{1/2} \,,
  & |V_+|^{1/2}\mathrm{R}_{\alpha , +-}(z) V_- ^{1/2}
  \\
  |V_-|^{1/2}\mathrm{R}_{\alpha ,-+}(z) V_+ ^{1/2} \,,
  & |V_-|^{1/2}\mathrm{R}_{\alpha , --}(z) V_- ^{1/2}
\end{pmatrix}
$$
acting on $L_+^2\oplus L_-^2$.
%
\begin{theorem} \label{th-resolvent}
Let $z\in \C\setminus \sigma (H_{\alpha , V_+, V_-})$ be such that
the operator $1+B(z)$ is invertible with bounded inverse. Then
\begin{equation}\label{eq-resolvent}
R_{\alpha , V_+,V_-}(z)=R_\alpha (z ) -\sum_{i,j \in \{+,-\}}
\check{R}_{\alpha, i} (z) V_i^{1/2}[1+B(z)]_{ij}^{-1}|V_j|^{1/2}
\hat{R}_{\alpha  ,j} (z )\,
\end{equation}
determines the resolvent of $H_{\alpha ,V_+,V_-}$; the notions
$[1+B(z)]_{ij}^{-1}$ stand for the operator valued matrix elements
of $[1+B(z)]^{-1}$.
\end{theorem}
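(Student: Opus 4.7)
The plan is to mirror the proof of Theorem~\ref{th-resolvent0}. Denote the right-hand side of \eqref{eq-resolvent} by $T(z)$. As a first step, I would verify that $\mathrm{Ran}\, T(z) \subset W^{1,2}$; this is the analogue of \eqref{eq-aux1} for $\check{R}_{\alpha,i}(z) = R_{\alpha}(z) I_i^{\ast}$, and follows from the continuity of $I_i^{\ast} : L_i^2 \hookrightarrow W^{-1,2}$ together with the fact that $R_{\alpha}(z)$ extends to a bounded map $W^{-1,2} \to W^{1,2}$ (which is visible from the explicit formula \eqref{eq-resolupert-ext}).

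The core task is to establish the form identity
$$
  \mathcal{E}_{\alpha,V_+,V_-}\bigl(\psi, T(z)\phi\bigr)
  - z\bigl(\psi, T(z)\phi\bigr)
  = (\psi,\phi)
$$
for every $\psi \in W^{1,2}$ and $\phi \in L^2$, which identifies $T(z)$ as $R_{\alpha,V_+,V_-}(z)$ via the Riesz representation theorem applied to the closed form $\mathcal{E}_{\alpha,V_+,V_-}$. I would split the form as $\mathcal{E}_{\alpha,V_+,V_-} = \mathcal{E}_{\alpha,0,0} + \sum_{i\in\{+,-\}} (I_i\,\cdot\,, V_i I_i\,\cdot\,)_{L_i^2}$. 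For the first piece applied to the unperturbed part $R_{\alpha}(z)\phi$ of $T(z)\phi$ I would use \eqref{eq-resolcal} directly; for the singular pieces $\check{R}_{\alpha,i}(z) g_i$ with $g_i \in L_i^2$ I would use the duality-extended identity
$$
  \mathcal{E}_{\alpha,0,0}\bigl(\psi, \check{R}_{\alpha,i}(z) g_i\bigr)
  - z\bigl(\psi, \check{R}_{\alpha,i}(z) g_i\bigr)
  = (I_i \psi, g_i)_{L_i^2},
$$
which is a direct consequence of $\check{R}_{\alpha,i}(z) = R_{\alpha}(z) I_i^{\ast}$ and the duality pairing between $W^{1,2}$ and $W^{-1,2}$.

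The remaining work is the algebraic verification that the boundary contribution $\sum_i (I_i \psi, V_i I_i T(z)\phi)_{L_i^2}$ exactly cancels the cross-terms generated above. Using the factorization $V_i = V_i^{1/2} |V_i|^{1/2}$ and recognizing $|V_i|^{1/2}\mathrm{R}_{\alpha,ik}(z) V_k^{1/2}$ as the matrix entry $B(z)_{ik}$, I would apply the telescoping identity $1 - B(z)[1+B(z)]^{-1} = [1+B(z)]^{-1}$ to obtain
$$
  V_i I_i T(z)\phi
  = V_i^{1/2} \sum_{l\in\{+,-\}} [1+B(z)]_{il}^{-1} |V_l|^{1/2} \hat{R}_{\alpha,l}(z)\phi.
$$
Since $V_i^{1/2}$ is a real-valued multiplication operator, the pairing transforms as $(I_i\psi, V_i^{1/2}\,\cdot\,)_{L_i^2} = (V_i^{1/2} I_i \psi, \,\cdot\,)_{L_i^2}$, and the boundary sum then matches term by term the cross-sum from the previous step, yielding the desired cancellation. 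I expect this bookkeeping --- in particular keeping the asymmetric placement of $V^{1/2}$ versus $|V|^{1/2}$ straight in the matrix algebra --- to be the main technical obstacle.

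Finally, invertibility of $T(z)$ follows by the same scheme as in Theorem~\ref{th-resolvent0}: the explicit expression \eqref{eq-resolvent} together with an analogue of \eqref{eq-aux2} for $\check{R}_{\alpha,i}(z)$ shows that any $f \in \mathrm{Ran}\, T(z)$ decomposes as $f_1 \dot{+} f_2$ with $f_1 \in \mathrm{Ran}\, R_{\alpha}(z)$ and $f_2$ lying in a disjoint regularity class inherited from the singular embeddings, so $T(z)\phi = 0$ forces $\phi = 0$ and the form identity upgrades to the resolvent identity.
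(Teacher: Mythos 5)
Your proposal is correct and follows essentially the same route as the paper's proof: verify $\mathrm{Ran}\,T(z)\subset W^{1,2}$, establish the form identity $\mathcal{E}_{\alpha,V_+,V_-}(\psi,T(z)\phi)-z(\psi,T(z)\phi)=(\psi,\phi)$ by splitting into the unperturbed form plus the boundary terms and cancelling algebraically via the matrix $1+B(z)$, then conclude injectivity by the regularity-class argument from Theorem~\ref{th-resolvent0}. The only cosmetic difference is that you collapse the cancellation through $V_iI_iT(z)\phi=V_i^{1/2}\sum_l[1+B(z)]_{il}^{-1}|V_l|^{1/2}\hat{R}_{\alpha,l}(z)\phi$ using $1-B(1+B)^{-1}=(1+B)^{-1}$, while the paper instead inserts $\Upsilon^{-1}\Upsilon$ into the term $\sum_i(\psi,V_i\hat{R}_{\alpha,i}(z)\phi)_{L^2_i}$; these are the same matrix identity read in opposite directions.
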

\begin{proof} Combining (\ref{eq-resolupert-ext}) and
(\ref{eq-aux1}) and using the definition of $R_{\alpha ,
V_+,V_-}(z)$ given by (\ref{eq-resolvent}) we have $\mathrm{Ran}\,
\check{R}_{\alpha , i} (z)\subset W^{1,2}$ and consequently
$\mathrm{Ran}\, R_{\alpha , V_+, V_-} (z)\subset W^{1,2}$.

Denote   $\Upsilon(z)_{ij}:= [1+B(z)]_{ij}^{-1}$ and assume $\psi
\in W^{1,2}$ and $\phi \in L^2$. Employing the identity $
\mathcal{E}_{\alpha, 0, 0}( \psi, R_{\alpha }(z)\phi ) -z ( \psi,
R_{\alpha }(z) \phi ) =(\psi, \phi)$ we get
\begin{eqnarray} \nonumber
\lefteqn{\mathcal{E}_{\alpha, V_+, V_-} \big( \psi, R_{\alpha ,
V_+,V_-}(z)\phi \big) -z \big( \psi, R_{\alpha , V_+,V_-}(z) \phi
\big)
 = (\psi , \phi )}
\\
\nonumber && + \sum_{i \in \{+, -\}} \big(\psi , V_i
\hat{R}_{\alpha , i }(z)\phi\big)_{L^2 _i}- \sum_{i, j\in \{+,
-\}} \big(\psi,
 V_i^{1/2}\Upsilon (z)_{ij}|V_j|^{1/2}
\hat{R}_{\alpha  ,j} (z ) \phi \big)_{L^2 _i}
\\
\nonumber &&- \sum_{i ,j , k\in \{+, -\}} \big(\psi , V_i
\mathrm{R}_{\alpha, ij }(z) V_j ^{1/2} \Upsilon (z)_{jk}
|V_k|^{1/2} \hat{R}_{\alpha  ,k} (z ) \phi \big)_{L^2 _i}\,.
\end{eqnarray}
Furthermore, implementing
$$
  \big(\psi , V_i \hat{R}_{\alpha , i
}(z)\phi\big)_{L^2 _i}= \sum_{j , k\in \{+, -\}} \big(\psi , V_i
^{1/2} \Upsilon(z)_{ik}^{-1}\Upsilon(z)_{kj} |V_j |^{1/2}
\hat{R}_{\alpha , j }(z)\phi\big)_{L^2 _i}
$$
and using the explicit form of $\Upsilon (z)_{ij} $, one obtains
\begin{equation*}
\mathcal{E}_{\alpha, V_+, V_-} \big( \psi, R_{\alpha ,
V_+,V_-}(z)\phi \big) -z \big( \psi, R_{\alpha , V_+,V_-}(z) \phi
\big) =(\psi, \phi)\,.
\end{equation*}
Repeating the same argument as in the proof
Theorem~\ref{th-resolvent0}, we conclude that $R_{\alpha, V_+,
V_-}(z)$ is invertible.
%
\end{proof}

The resolvent formula derived in the above theorem can be written
in a short, more familiar form. For this aim it is convenient to
write $\mathrm{V}^{1/2}=V^{1/2} _+ \oplus V^{1/2} _-$ (analogously
for $|\mathrm{V}|^{1/2}$) and introduce $\hat{R}_\alpha (z)\,:\,
L^2 \to L^2_+ \oplus L^2_- $ defined by $\hat{R}_\alpha (z)\psi
=\hat{R}_{\alpha , +}(z) \psi \oplus \hat{R}_{\alpha , -}(z)\psi $
and $\check{R}_\alpha (z)\,:\, L^2_+ \oplus L^2_- \to L^2 $
defined by $\check{R}_\alpha (z)(f_+ \oplus f_- )
=\check{R}_{\alpha , +}(z) f_+  + \check{R}_{\alpha , -}(z)f_-$.
With these notations, identity~(\ref{eq-resolvent}) reads
\begin{equation}\label{eq-resolI}
R_{\alpha , V_+,V_-}(z)=R_\alpha (z ) - \check{R}_{\alpha} (z)
\mathrm{V}^{1/2}[1+B(z)]^{-1}|\mathrm{V}|^{1/2} \hat{R}_{\alpha }
(z )\,.
\end{equation}

Note that (\ref{eq-resolI}) states a Krein-like  resolvent
studied by Posilicano, \cite{Po, Po04}. (To be fully specific we
identify the embedding $\eta $ introduced in \cite{Po} with
 $W^{1,2} \hookrightarrow  L^2 (\R\times \{+a \}, |V_+|^{1/2}dx) \oplus L^2
(\R\times \{-a\}, |V_-|^{1/2}dx) $. Furthermore, redefining all
the embeddings introduced above by means of $\eta$, we get the
resolvent formula derived in \cite[Thm.~2.1]{Po}.
Applying the results of \cite{Po04},
$$
z\in \rho (H_\alpha )\cap \rho (H_{\alpha, V_+,
V_-})\Leftrightarrow z\in \rho (1+B(z))
$$
and
\begin{equation}\label{eq-spectresol}
z\in \sigma _{\mathrm{p}}(H_{\alpha, V_+, V_-}) \Leftrightarrow
\mathrm{Ker}\, (1+B(z))\neq \{0\}\,.
\end{equation}

\subsubsection{The system with mirror symmetry}\label{Sec.mirror}
Let us consider a system with the mirror symmetry, \ie
\begin{equation}\label{mirror}
V_+ = V_0 = V_- \,,
\end{equation}
where $V_0:\R\to\R$ is a given function. For this case we use a
special notation, $H_{\alpha , V_0}:= H_{\alpha , V_0, V_0}$.
Using~(\ref{eq-resolvent}), we can reconstruct the resolvent
$R_{\alpha ,V_0}(z):=(H_{\alpha , V_0}-z)^{-1}$.

Now, we introduce a ``slight'' perturbation of the symmetry taking
\begin{eqnarray} \label{eq-broken}
&&V_+ =V+V_\epsilon\,, \qquad \mbox{where} \qquad V_\epsilon
:=\epsilon V_p \,, \quad \epsilon> 0 \,,
\\ \nonumber  &&
V_- =V \,,
\end{eqnarray}
where the ``perturbant'' $V_p$ is a function from $L^\infty (\R)$.
In the following we abbreviate $H_{\alpha ;
\epsilon}:=H_{\alpha,V+ V_\epsilon ,V }$.

Considering the system governed by $H_{\alpha , V_0}$ as a
starting point, we construct the resolvent $R_{\alpha ; \epsilon}$
of $H_{\alpha ; \epsilon}$
\begin{equation}\label{eq-resolventperturbed}
R_{\alpha ; \epsilon }(z)=R_{\alpha , V_0}(z ) -
\check{R}_{\alpha, +} (z) V_\epsilon ^{1/2} \Gamma (z) ^{-1}
|V_\epsilon |^{1/2} \hat{R}_{\alpha ,+} (z )\,
\end{equation}
where
\begin{equation}\label{eq-gamma}
  \Gamma(z):=1+|V_\epsilon
|^{1/2}\mathrm{R}_{\alpha , V_0 }(z) V_\epsilon ^{1/2}\,
\end{equation}
and  $\mathrm{R}_{\alpha , V_0 }$ acts as $R_{\alpha , V_0 ,V_0 }
$ but maps from $L_+^2$ to $L_+^2$. The proof of
(\ref{eq-resolventperturbed}) can be obtained repeating the
arguments of the proof of Theorem~\ref{th-resolvent}.
%
%

\section{The essential spectrum}\label{sec-essential}

The spectrum of the unperturbed Hamiltonian $H_{\alpha, 0,0 }$ is
given by (\ref{eq-decomunpert0}). In the following we show that
the essential spectrum~\eqref{spectrum0} is stable under
perturbations~$V_\pm$ which \emph{vanish at infinity} in the following
sense:
\begin{equation}\label{vanish}
  \lim_{L\to\infty}
  \esssup_{\R\setminus[-L,L]} |V_\pm| = 0
  \,.
\end{equation}
\begin{theorem}[Essential spectrum]\label{Thm.ess}
Assume~\eqref{vanish}. Then
$$
  \sigma_\mathrm{ess}(H_{\alpha,V_+,V_-})
  = [\xi_0,\infty)
  \,.
$$
\end{theorem}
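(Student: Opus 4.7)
The proof splits naturally into the two inclusions foreshadowed in the introduction: the lower bound $\inf\sigma_{\mathrm{ess}}(H_{\alpha,V_+,V_-}) \geq \xi_0$ by Neumann bracketing, and the upper bound $[\xi_0,\infty) \subseteq \sigma_{\mathrm{ess}}(H_{\alpha,V_+,V_-})$ by a form-level Weyl sequence built from the transverse ground state $\phi_0$.

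For the lower bound, I would fix $\eta>0$ arbitrarily small and use \eqref{vanish} to pick $L>0$ with $\esssup_{|x_1|>L}|V_\pm|<\eta$. Imposing Neumann conditions along the two vertical lines $\{x_1=\pm L\}$ decouples the form $\mathcal{E}_{\alpha,V_+,V_-}$ as an orthogonal sum of three forms supported on the left half-plane, the strip $\{|x_1|<L\}$, and the right half-plane, yielding
$$
  H_{\alpha,V_+,V_-} \geq H_L^- \oplus H_L^0 \oplus H_L^+.
$$
On each outer half-plane the singular coupling $-\alpha$ is unaffected while $|V_\pm|<\eta$; a half-plane analogue of Lemma~\ref{Lem.Sobolev} absorbs the trace perturbation into the kinetic energy to give $H_L^\pm \geq \xi_0 - C\sqrt{\eta}$, since the unperturbed reference operator factorizes as the Neumann Laplacian on the half-line (spectrum $[0,\infty)$) tensored with $-\Delta_\alpha^\R$ (spectrum $[\xi_0,\infty)$). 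On the strip $H_L^0$, the perturbation $V_\pm$ is carried by the bounded segments $(-L,L)\times\{\pm a\}$ and acts as a \emph{compact} form perturbation of the unperturbed strip operator, by the Rellich embedding and the trace inequality; hence $\sigma_{\mathrm{ess}}(H_L^0) = \sigma_{\mathrm{ess}}(H_{L,0,0}^0) = [0,\infty)$. The minimax principle applied to the direct-sum lower bound therefore yields $\inf\sigma_{\mathrm{ess}}(H_{\alpha,V_+,V_-}) \geq \xi_0 - C\sqrt{\eta}$, and letting $\eta\to 0$ closes this direction.

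For the upper bound, given $\mu = \xi_0 + \kappa^2 \geq \xi_0$ with $\kappa \geq 0$, I would pick $\chi \in C_0^\infty(\R)$ with $\|\chi\|_{L^2(\R)}=1$ and form the translating-modulated ansatz
$$
  \psi_n(x_1,x_2) := \frac{1}{\sqrt{n}} \,
  \chi\!\left(\frac{x_1 - n^2}{n}\right) \e^{i\kappa x_1} \phi_0(x_2).
$$
Then $\|\psi_n\|_{L^2}$ is independent of $n$ and $\psi_n \rightharpoonup 0$ weakly because the support drifts to $+\infty$ along $x_1$. Separation of variables and $\varepsilon_\alpha[\phi_0] = \xi_0\|\phi_0\|^2$ give
$$
  \mathcal{E}_{\alpha,0,0}(\psi_n,\phi) - \mu(\psi_n,\phi) = O(n^{-1}) \, \|\phi\|_{W^{1,2}}
$$
uniformly in $\phi \in W^{1,2}$, with the $O(n^{-1})$ coming solely from the derivative of the dilated cutoff. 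Since $\supp\psi_n \subset \{x_1 > n^2 - Cn\}$ eventually, hypothesis \eqref{vanish} together with Lemma~\ref{Lem.Sobolev} applied to $\phi$ produces
$$
  \bigl|(V_\pm I_\pm \psi_n,\, I_\pm\phi)_{L^2_\pm}\bigr|
  \leq \esssup_{|x_1|>n^2-Cn}|V_\pm| \cdot \|I_\pm\psi_n\|_{L^2_\pm}\|I_\pm\phi\|_{L^2_\pm}
  = o(1)\|\phi\|_{W^{1,2}}.
$$
Hence $(\psi_n)$ is a form-level singular sequence for $H_{\alpha,V_+,V_-}$ at energy $\mu$, placing $\mu$ in $\sigma_{\mathrm{ess}}$.

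The main obstacle is the half-plane portion of the Neumann bracketing: one has to formulate a half-space version of the trace estimate \eqref{Sobolev} and carry out a sharp absorption so that the loss in the spectral threshold is controlled as $O(\sqrt{\eta})$ rather than by some $\eta$-independent constant depending on $\|V_\pm\|_\infty$. The strip case is easy because its essential spectrum already sits above $0 > \xi_0$ and compact perturbations do not move it. The Weyl-sequence half is routine once the right translating-modulating ansatz with $\phi_0$ as transverse profile is in hand.
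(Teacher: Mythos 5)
Your overall strategy---a Neumann bracketing for the lower bound and a form-level Weyl singular sequence built from $\phi_0$ for the upper bound---is exactly the one the paper uses, and your Weyl-sequence half is essentially identical (same translating, dilating, $e^{i\kappa x_1}$-modulated cutoff multiplied by $\phi_0$, with the same $O(n^{-1})$ bookkeeping and the vanishing-at-infinity hypothesis killing the boundary terms). Where you genuinely diverge is in the bracketing geometry. You cut $\R^2$ by the two vertical lines $\{x_1=\pm L\}$ only, producing a full vertical strip $(-L,L)\times\R$ plus two half-planes; the strip still contains the singular set, so you must invoke relative form-compactness of the trace on the bounded segments $(-L,L)\times\{\pm a\}$ (Rellich plus compactness of $H^{1/2}\hookrightarrow L^2$ on a bounded curve) to identify $\sigma_\mathrm{ess}(H_L^0)=[0,\infty)$. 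The paper instead adds two further Neumann segments $(-L,L)\times\{\pm L\}$, so that the middle region decomposes into a bounded box $(-L,L)^2$ (purely discrete spectrum, contributes nothing to the essential spectrum) and two half-strips $(-L,L)\times(\pm L,\pm\infty)$ that avoid $\Sigma$ entirely and have spectrum $[0,\infty)$ by inspection; this eliminates the compactness step at the cost of one more cut. The paper also handles the half-planes through the sliced functional $\lambda$ of Lemma~\ref{Lem.lower} (whose essential infimum tends to $0$ as $L\to\infty$ by continuity of $\tilde\lambda$) rather than a direct $\sqrt{\eta}$-absorption into the kinetic term via Lemma~\ref{Lem.Sobolev}; both routes yield a vanishing loss, so this is a stylistic rather than substantive difference. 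Your compactness argument on the unbounded strip is correct but worth spelling out with the localized-trace reasoning you sketch; the paper's extra cuts buy a cleaner proof in which every bracketed piece's essential spectrum is read off without any compact-perturbation argument.
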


We prove the theorem as a consequence of two steps.

\subsection{A lower bound for  the essential spectrum threshold}

We show that the threshold of the essential spectrum does not
descend below the energy~$\xi_0$.
\begin{lemma}
Assume~\eqref{vanish}. Then
$$
  \inf\sigma_\mathrm{ess}(H_{\alpha,V_+,V_-})
  \geq \xi_0
  \,.
$$
\end{lemma}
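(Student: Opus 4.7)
The plan is to realise the Neumann bracketing strategy hinted at in the introduction. Fix $\eta>0$; by the vanishing assumption~\eqref{vanish} choose $L>0$ large enough so that $\esssup_{|x_1|>L}|V_\pm|<\epsilon$, where $\epsilon=\epsilon(\alpha,\eta)>0$ will be fixed later. Decompose $\R^2$ along the lines $x_1=\pm L$ into the strip $\Omega_{\mathrm{int}}:=(-L,L)\times\R$ and the two half-planes $\Omega_\pm:=\{\pm x_1>L\}\times\R$, and enlarge the form domain from $W^{1,2}(\R^2)$ to $W^{1,2}(\Omega_{\mathrm{int}})\oplus W^{1,2}(\Omega_+)\oplus W^{1,2}(\Omega_-)$ (equivalently, impose Neumann conditions at $x_1=\pm L$), keeping the same formula. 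This produces an orthogonal-sum operator $H^N_L=H^{\mathrm{int}}_L\oplus H^+_L\oplus H^-_L$ satisfying $H^N_L\leq H_{\alpha,V_+,V_-}$ in the form sense, and the minimax principle reduces the claim to showing $\inf\sigma_{\mathrm{ess}}(H^*_L)\geq \xi_0-\eta$ for each $*\in\{\mathrm{int},+,-\}$.

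For the half-plane pieces $H^\pm_L$, the decisive feature is the smallness of $V_\pm$ on $\Omega_\pm$. Writing $\mathcal{E}^\pm_L[\psi]=\mathcal{E}^{\pm,0}_L[\psi]+\int V_+|I_+\psi|^2+\int V_-|I_-\psi|^2$ with $\mathcal{E}^{\pm,0}_L$ the unperturbed counterpart, I would apply Lemma~\ref{Lem.Sobolev} (which extends verbatim to the half-plane) first with a small parameter to dominate the defect terms by $C\epsilon(\|\psi\|^2+\|\partial_2\psi\|^2)$, and then a second time with parameter $\delta=1/(4\alpha)$ to bound $\|\partial_2\psi\|^2$ in terms of $\mathcal{E}^{\pm,0}_L[\psi]+\mathrm{const}\cdot\|\psi\|^2$. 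Combining yields an estimate of the shape
\[
  \mathcal{E}^\pm_L[\psi] \geq (1-C_1\epsilon)\,\mathcal{E}^{\pm,0}_L[\psi] - C_2\epsilon\,\|\psi\|^2,
\]
with $C_1,C_2$ depending only on $\alpha$. The unperturbed operator $H^{\pm,0}_L$ separates as $-\Delta^N_{(L,\infty)}\otimes 1+1\otimes(-\Delta^\R_\alpha)$ with spectrum $[\xi_0,\infty)$, so $\mathcal{E}^{\pm,0}_L[\psi]\geq \xi_0\|\psi\|^2$; taking $\epsilon$ sufficiently small then delivers $\mathcal{E}^\pm_L[\psi]\geq (\xi_0-\eta)\|\psi\|^2$.

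For the interior piece, the unperturbed operator $H^{\mathrm{int},0}_L=-\Delta^N_{(-L,L)}\otimes 1+1\otimes(-\Delta^\R_\alpha)$ has essential spectrum $[0,\infty)$, since the first factor has compact resolvent on the bounded interval. The perturbation by $V_\pm$, supported on the bounded segments $(-L,L)\times\{\pm a\}$, is relatively form-compact: a Rellich-type argument employing a cut-off in $x_2$ shows that $\psi\mapsto V_\pm I_\pm\psi$ is a compact map from $W^{1,2}(\Omega_{\mathrm{int}})$ to $L^2$ of the relevant segment. Hence $\sigma_{\mathrm{ess}}(H^{\mathrm{int}}_L)=[0,\infty)$ and $\inf\sigma_{\mathrm{ess}}(H^{\mathrm{int}}_L)=0>\xi_0$. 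Combining the three pieces and letting $\eta\to 0$ proves the lemma. The main obstacle will be the iterated use of Lemma~\ref{Lem.Sobolev} on the half-planes, where the negativity of $\xi_0$ forces one to track the interaction between the multiplicative factor $(1-C_1\epsilon)$ and the lower bound $\xi_0\|\psi\|^2$; verifying the relative form-compactness on $\Omega_{\mathrm{int}}$ is more routine but also requires care because that domain is unbounded in $x_2$.
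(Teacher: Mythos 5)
Your proof is sound and reaches the same conclusion by a genuinely different decomposition. The paper's Neumann bracketing cuts along both $x_1=\pm L$ \emph{and} $x_2=\pm L$, producing a five-piece partition whose central piece is the bounded square $\Omega_0=(-L,L)^2$; that piece contributes nothing to the essential spectrum simply because an operator on a bounded domain has compact resolvent. You cut only along $x_1=\pm L$, so your central piece is the unbounded strip $(-L,L)\times\R$, and to show its essential spectrum still starts at $0$ you must invoke relative form-compactness of the line perturbation (a cut-off in $x_2$ reducing to a compact-trace estimate on a bounded rectangle). This works, but it is the price of the simpler geometry: the paper's extra cuts are precisely what let it bypass any compactness argument. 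On the two half-planes, the paper simply recycles Lemma~\ref{Lem.lower}, which already packages the bound $\mathcal{E}_k^N[\psi] - \xi_0\|\psi\|^2 \geq \essinf_{\Omega_k}\lambda\,\|\psi\|^2$ via the fibered quantity $\lambda$ of~\eqref{lambda}, whereas you rederive the half-plane lower bound directly by an iterated use of Lemma~\ref{Lem.Sobolev}. Both routes are valid, and you correctly flag the one point that genuinely needs care: since $\xi_0<0$ and $0<1-C_1\epsilon<1$, multiplying the inequality $\mathcal{E}^{\pm,0}_L[\psi]\geq\xi_0\|\psi\|^2$ by $(1-C_1\epsilon)$ only improves the lower bound, so the estimate closes with an error $O(\epsilon)\|\psi\|^2$, which is absorbed by sending $\epsilon\to 0$. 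To tighten your write-up I would state explicitly that the compactness of $W^{1,2}((-L,L)\times(a-1,a+1))\to L^2((-L,L)\times\{a\})$ follows from Rellich on the bounded rectangle plus the boundedness of the trace on fractional Sobolev spaces of order $>1/2$, and note that you could have invoked the paper's Lemma~\ref{Lem.lower} on the half-planes to shorten the second step; but neither point is a gap.
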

\begin{proof}
Given a number $L>a$, let~$H^N$ denote the operator
$H_{\alpha,V_+,V_-}$ with a supplementary Neumann condition
imposed on the lines $\{\pm L\} \times \R$ and segments $(-L,L)
\times \{\pm L\}$. It is conventionally introduced as the operator
associated with the quadratic form~$\mathcal{E}^N$ which acts in
the same way as $\mathcal{E}_{\alpha,V_+,V_-}$ but has a larger
domain $
  \Dom(\mathcal{E}^N) := \bigoplus_{k=0}^4 W^{1,2}(\Omega_k)
$, where~$\Omega_k$ are the connected components of~$\R^2$ divided
by the curves where the Neumann condition is imposed:
\begin{equation*}
  \Omega_0 := (-L,L)^2
  \,, \quad
\begin{aligned}
  \Omega_1 &:= (-L,L) \times (-\infty,-L)
  \,, \quad
  & \Omega_3 &:= (-\infty,-L) \times \R
  \,, \\
  \Omega_2 &:= (-L,L) \times (L,\infty)
  \,, \quad
  &\Omega_4 &:= (L,\infty) \times \R
  \,.
\end{aligned}
\end{equation*}
We have the decomposition $H^N = \bigoplus_{k=0}^4 H_k^N$,
where~$H_k^N$ are the operators associated on $L^2(\Omega_k)$ with
the quadratic forms
\begin{align*}
  \mathcal{E}_k^N[\psi]
  & := \int_{\Omega_k} |\nabla\psi|^2
  + \int_{(\R\times\{+a\})\cap\Omega_k} (V_+ - \alpha) |I_+ \psi|^2
  + \int_{(\R\times\{-a\})\cap\Omega_k} (V_- - \alpha) |I_- \psi|^2
  ,
  \\
  \Dom(\mathcal{E}_k^N)
  & := W^{1,2}(\Omega_k )
  \,.
\end{align*}

Since $H_{\alpha,V_+,V_-} \geq H^N$ and the spectrum of~$H_0^N$ is
purely discrete, the minimax principle gives the estimate
$$
  \inf\sigma_\mathrm{ess}(H_{\alpha,V_+,V_-})
  \geq \min_{k\in\{1,\dots,4\}}
  \left\{ \inf\sigma_\mathrm{ess}(H_k^N) \right\}
  \geq \min_{k\in\{1,\dots,4\}}
  \left\{ \inf\sigma(H_k^N) \right\}
  \,.
$$
It is easy to see that the spectra of~$H_1^N$ and~$H_2^N$ coincide
with the non-negative semi-axis $[0,\infty)$. Hence, it remains to
analyze the bottoms of the spectra of~$H_3^N$ and~$H_4^N$. Using
the analogous arguments as leading to Lemma~\ref{Lem.lower}, we
have the lower bound ($k\in\{3,4\}$)
$$
  \mathcal{E}_k^N[\psi] - \xi_0 \, \|\psi\|_{L^2(\Omega_k)}^2
  \,\geq\, \int_{\Omega_k} |\partial_1\psi|^2
  + \int_{\Omega_k} \lambda \, |\psi|^2
  \,\geq\, \essinf_{\Omega_k}\lambda \ \|\psi\|_{L^2(\Omega_k)}^2
  \,.
$$
Consequently,
$$
  \inf\sigma_\mathrm{ess}(H_{\alpha,V_+,V_-})
  \geq
  \min\Big\{ 0, \,
  \xi_0 + \min
  \big\{ \essinf_{\Omega_3}\lambda, \essinf_{\Omega_4}\lambda \big\}
  \Big\}
  \,.
$$
Now, since~$V_\pm$ vanish at infinity, the same is true for the
function~$\lambda$. Hence, for every~$\epsilon$, there exists~$L$
such that for a.e.\ $|x_1|> L$ ($x_2 \in \R$), we have
$
  |\lambda(x)| < \epsilon
$.
Consequently, if $\epsilon < |\xi_0|$,
$$
  \inf\sigma_\mathrm{ess}(H_{\alpha,V_+,V_-})
  \geq \xi_0-\epsilon
  \,.
$$
The claim then follows by the fact that~$\epsilon$ can be chosen
arbitrarily small.
\end{proof}

\subsection{The opposite inclusion}
%
%
\begin{lemma}
Assume~\eqref{vanish}. Then
$$
  \sigma_\mathrm{ess}(H_{\alpha,V_+,V_-})
  \supseteq [\xi_0,\infty)
  \,.
$$
\end{lemma}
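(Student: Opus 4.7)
The plan is to invoke the form version of Weyl's criterion: $\lambda\in\sigma_{\mathrm{ess}}(H_{\alpha,V_+,V_-})$ as soon as one can exhibit a sequence $\{\psi_n\}\subset W^{1,2}$ with $\liminf\|\psi_n\|_{L^2}>0$, $\psi_n\rightharpoonup 0$ in $L^2$, and
\begin{equation*}
  \sup_{\varphi\in W^{1,2}\setminus\{0\}}
  \frac{\big|\mathcal{E}_{\alpha,V_+,V_-}(\psi_n,\varphi)-\lambda(\psi_n,\varphi)\big|}
       {\|\varphi\|_{W^{1,2}}}
  \longrightarrow 0.
\end{equation*}
For $\lambda\in[\xi_0,\infty)$ I would set $p:=\sqrt{\lambda-\xi_0}$, fix $\chi\in C_0^\infty(\R)$ with $\mathrm{supp}\,\chi\subset(1,2)$ and $\|\chi\|_{L^2(\R)}=1$, and introduce the separable trial functions
\begin{equation*}
  \chi_n(x_1):=n^{-1/2}\chi(x_1/n),
  \qquad
  \psi_n(x):=\chi_n(x_1)\,\e^{ipx_1}\,\phi_0(x_2).
\end{equation*}
Then $\psi_n\in W^{1,2}$, $\|\psi_n\|_{L^2}=\|\phi_0\|_{L^2(\R)}$, $\mathrm{supp}\,\psi_n\subset(n,2n)\times\R$ so $\psi_n\rightharpoonup 0$, and $\|\chi_n'\|_{L^2(\R)}=O(1/n)$.

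The technical core is to evaluate $\mathcal{E}_{\alpha,0,0}(\psi_n,\varphi)-\lambda(\psi_n,\varphi)$ using the eigenequation satisfied by $\phi_0$ in the form sense. Applying Fubini in $x_2$, the transverse contribution
\begin{equation*}
  \int_\R\phi_0'(x_2)\,\overline{\partial_2\varphi(x_1,x_2)}\,\D x_2
  -\alpha\phi_0(a)\overline{\varphi(x_1,a)}
  -\alpha\phi_0(-a)\overline{\varphi(x_1,-a)}
\end{equation*}
equals $\xi_0\!\int_\R\phi_0\,\overline{\varphi(x_1,\cdot)}$ for a.e.\ $x_1$ by $\varepsilon_\alpha(\phi_0,\cdot)=\xi_0(\phi_0,\cdot)$. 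This absorbs the boundary terms and reduces the difference to an $x_1$-integral involving only the longitudinal derivative $u_n':=(\chi_n\e^{ipx_1})'$. A single integration by parts in $x_1$ cancels the contribution proportional to $p\chi_n\e^{ipx_1}$ against the $-p^2(\psi_n,\varphi)$-term, leaving
\begin{equation*}
  \mathcal{E}_{\alpha,0,0}(\psi_n,\varphi)-\lambda(\psi_n,\varphi)
  = \int_{\R^2}\chi_n'\,\e^{ipx_1}\phi_0\,\overline{\partial_1\varphi}
  - ip\int_{\R^2}\chi_n'\,\e^{ipx_1}\phi_0\,\overline{\varphi},
\end{equation*}
which by Cauchy--Schwarz is bounded by $C\|\chi_n'\|_{L^2(\R)}\|\varphi\|_{W^{1,2}}$ and hence tends to zero.

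It remains to incorporate the potentials $V_\pm$. The discrepancy between $\mathcal{E}_{\alpha,V_+,V_-}$ and $\mathcal{E}_{\alpha,0,0}$ applied to $(\psi_n,\varphi)$ reads
\begin{equation*}
  \phi_0(a)\int_\R V_+(x_1)\chi_n(x_1)\e^{ipx_1}\,\overline{\varphi(x_1,a)}\,\D x_1
  + \phi_0(-a)\int_\R V_-\chi_n\e^{ipx_1}\,\overline{\varphi(\cdot,-a)}.
\end{equation*}
Since $\mathrm{supp}\,\chi_n\subset(n,2n)$, $\|\chi_n\|_{L^2}=1$, and the trace estimate of Lemma~\ref{Lem.Sobolev} gives $\|\varphi(\cdot,\pm a)\|_{L^2(\R)}\leq C\|\varphi\|_{W^{1,2}}$, this is controlled by
$C\big(\|V_+\|_{L^\infty(n,\infty)}+\|V_-\|_{L^\infty(n,\infty)}\big)\|\varphi\|_{W^{1,2}}$, which vanishes as $n\to\infty$ by the hypothesis~\eqref{vanish}. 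Combining the two bounds delivers the required uniform smallness, and the form-Weyl criterion yields $\lambda\in\sigma_{\mathrm{ess}}(H_{\alpha,V_+,V_-})$.

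The main obstacle I anticipate is the bookkeeping in the Fubini/absorption step: one must verify carefully that the transverse boundary terms arising from $\mathcal{E}_{\alpha,0,0}$ are precisely those canceled by the interface conditions packaged into $\varepsilon_\alpha(\phi_0,\cdot)=\xi_0(\phi_0,\cdot)$. Once this cancellation is recorded, the rest of the argument is a standard construction of a longitudinally spreading Weyl sequence, and the vanishing-at-infinity assumption on $V_\pm$ handles the singular perturbation terms without any further difficulty.
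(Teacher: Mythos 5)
Your proposal is correct and follows essentially the same route as the paper's own proof: both apply the form-version of Weyl's criterion to a longitudinally spreading trial sequence of the form (escaping bump in $x_1$) $\times$ (plane wave $\e^{ipx_1}$) $\times$ (transverse ground state $\phi_0(x_2)$), cancel the transverse part via the eigenequation for $\phi_0$, bound the residual derivative terms by the $O(1/n)$ decay of the bump's derivatives, and control the singular potential terms using the vanishing of $V_\pm$ at infinity over the escaping support. The only differences are cosmetic: the paper scales/shifts its bump so that $\supp\varphi_n\subset(n^2-n,n^2+n)$, whereas you use $\supp\chi_n\subset(n,2n)$ (either escapes to infinity), and the paper measures the defect in the dual norm $\|\cdot\|_{-1}$ built from $\|\cdot\|_{+1}$, which it notes is equivalent to the $W^{1,2}$ norm you use directly.
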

\begin{proof}
Our proof is based on the Weyl criterion adapted to quadratic
forms in~\cite{DDI} and applied to quantum waveguides
in~\cite{KKriz}. By this general characterization of essential
spectrum and since the set $[\xi_0,\infty)$ has no isolated
points, it is enough to find for every $\xi \in [\xi_0,\infty)$ a
sequence $
  \{\psi_n\}_{n=1}^\infty \subseteq \Dom(\mathcal{E}_{\alpha,V_+,V_-})
$ such that
\begin{itemize}
\item[(i)]
$\forall n\in\N\setminus\{0\}$, \quad $\|\psi_n\|_{L^2}=1$,
\item[(ii)]
$\big\|(H_{\alpha,V_+,V_-}-\xi)\psi_n\big\|_{-1}
\xrightarrow[n\to\infty]{} 0$.
\end{itemize}
The symbol~$\|\cdot\|_{-1}$ denotes the norm in
$\Dom(\mathcal{E}_{\alpha,V_+,V_-})^*$, the latter being
considered as the dual of the space
$\Dom(\mathcal{E}_{\alpha,V_+,V_-})$ equipped with the norm
$$
  \|\psi\|_{+1} :=
  \sqrt{\mathcal{E}_{\alpha,V_+,V_-}[\psi]+(1+C_0)\|\psi\|_{L^2}^2}
  \,,
$$
where~$C_0$ denotes any positive constant such that
$H_{\alpha,V_+,V_-}+C_0$ is a non-negative operator. We choose the
constant~$C_0$ sufficiently large, so that $\|\cdot\|_{+1}$ is
equivalent to the usual norm in $W^{1,2}$. This is possible in
view of the boundedness of~$V_\pm$ and Lemma~\ref{Lem.Sobolev}.

Let $n\in\N\setminus\{0\}$. Given $k\in\R$, we set
$\xi:=\xi_0+k^2$. Recall that the function $\phi_0$ denotes the
ground state of $-\Delta_\alpha^\R$ corresponding to $\xi_0$.
Since the interactions~$V_\pm$ vanish at infinity, a good
candidates for the sequence $\psi_n$ seem to be plane waves in the
$x_1$-direction ``localized at infinity'' and modulated by
$\phi_0$ in the $x_2$-direction. Precisely
$$
  \psi_n(x) := \varphi_n(x_1) \, \phi_0(x_2) \, e^{ikx_1}
  \,,
$$
where $\varphi_n(x_1):=n^{-1/2} \varphi(x_1/n-n)$ with~$\varphi$
being a non-zero $C^\infty$-smooth function with compact support
in the interval $(-1,1)$. Note that $\supp\varphi_n \subset
(n^2-n,n^2+n)$. Clearly, $\psi_n \in
W^{1,2}=\Dom(\mathcal{E}_{\alpha,V_+,V_-})$. To satisfy~(i), we
assume that both~$\phi_0$ and~$\varphi$ are normalized to~$1$ in
$L^2(\R)$. It remains to verify condition~(ii).

By the definition of dual norm, we have
$$
  \big\|(H_{\alpha,V_+,V_-}-\xi)\psi_n\big\|_{-1}
  = \sup_{\eta \in W^{1,2} \setminus\{0\}}
  \frac{\big|\mathcal{E}_{\alpha,V_+,V_-}(\eta,\psi_n)
  - \xi (\eta,\psi_n)_{L^2}\big|}
  {\|\eta\|_{+1}}
  \,.
$$
An explicit computation using an integration parts yields
\begin{multline*}
  \mathcal{E}_{\alpha,V_+,V_-}(\eta,\psi_n)-\xi(\eta,\psi_n)_{L^2}
  \\
  = \Big(\eta,
  [-\ddot\varphi_n-2ik\dot\varphi_n] \;\! \phi_0 \;\! e^{ikx_1}
  \Big)_{L^2}
  + \int_{\R\times\{a\}} V_+ \,  I_+ (\overline{\eta} \, \psi_n)
  + \int_{\R\times\{-a\}} V_- \, I_- (\overline{\eta} \, \psi_n)
  \,.
\end{multline*}
Using the Schwarz inequality and the normalization of~$\phi_0$, we
get the estimates
\begin{align*}
  \Big|\Big(\eta,
  [-\ddot\varphi_n-2ik\dot\varphi_n] \;\! \phi_0 \;\! e^{ikx_1}
  \Big)_{L^2}\Big|
  &\leq \|\eta\|_{L^2}
  \left(
  \|\ddot\varphi_n\|_{L^2(\R)}+2|k|\|\dot\varphi_n\|_{L^2(\R)}
  \right)
  \,,
  \\
  \Big| \int_{\R\times\{\pm a\}} V_\pm \, I_\pm (\overline{\eta} \, \psi_n )\Big|
  &\leq \|I_\pm \eta\|_{L^2_\pm } \, \|V_\pm\varphi_n\|_{L^2 (\R) } \,
  |\phi_0(\pm a)|
  \,.
\end{align*}
By the choice of~$C_0$ and Lemma~\ref{Lem.Sobolev}, both
$\|\eta\|_{L^2}$ and $\|I_\pm \eta\|_{L^2_\pm}$  can be bounded by
a constant times $\|\eta\|_{+1}$. Hence, there is a constant~$C$,
depending on~$a$, $\alpha$, $k$, $\|V_+\|_\infty$ and
$\|V_-\|_\infty$, such that
\begin{multline*}
  \big\|(H_{\alpha,V_+,V_-}-\xi)\psi_n\big\|_{-1}
  \\
  \leq C \left(
  \|\ddot\varphi_n\|_{L^2(\R)} + \|\dot\varphi_n\|_{L^2(\R)}
  + \|V_+\varphi_n\|_{L^2(\R)} + \|V_-\varphi_n\|_{L^2(\R)}
  \right)
  \,.
\end{multline*}
The first two norms on the right hand side tends to zero as
$n\to\infty$ because
$$
  \|\dot\varphi_n\|_{L^2(\R)}
  = n^{-1} \, \|\dot\varphi\|_{L^2(\R)}
  \,, \qquad
  \|\ddot\varphi_n\|_{L^2(\R)}
  = n^{-2} \, \|\ddot\varphi\|_{L^2(\R)}
  \,.
$$
The remaining terms tend to zero because of the estimate
\begin{equation*}
  \|V_\pm\varphi_n\|_{L^2(\R)}
  \leq
  \esssup_{\supp\varphi_n}|V_\pm|
  \,,
\end{equation*}
in which we have employed the normalization of~$\varphi_n$, and
the fact that $\inf\supp\varphi_n$ tends to infinity as
$n\to\infty$.
\end{proof}
%

\section{The point spectrum} \label{sec-discrete}
%
%
In this section we study the existence of eigenvalues
corresponding to bound states. We will be interested in the
discrete spectrum as well as embedded eigenvalues.

\subsection{The discrete spectrum}
%
In the following we establish a sufficient condition which
guarantees the existence of discrete eigenvalues outside the
essential spectrum.
\begin{theorem}[Discrete spectrum]\label{Thm.disc}
Let $V_- + V_+ \in L^1(\R)$. Assume that
$$
  \int_\R (V_- + V_+)(x) \, \D x < 0
  \,.
$$
Then
$$
  \inf\sigma(H_{\alpha,V_+,V_-}) < \xi_0
  \,.
$$
Consequently, if in addition~$V_\pm$ vanish at
infinity~\eqref{vanish}, then~$H_{\alpha,V_+,V_-}$ possesses at
least one isolated eigenvalue of finite multiplicity
below~$\xi_0$, \ie~\eqref{DiscSpec} holds.
\end{theorem}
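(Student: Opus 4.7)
The plan is to apply the variational characterization of $\inf\sigma(H_{\alpha,V_+,V_-})$ with a test function chosen by separation of variables. Motivated by the product structure of the unperturbed Hamiltonian in~\eqref{eq-decomunpert0}, I would take
$$
  \psi_\sigma(x_1,x_2) := \varphi_\sigma(x_1) \, \phi_0(x_2)
  \,,
$$
where $\phi_0$ is the normalized ground state of $-\Delta_\alpha^\R$ (even and strictly positive, \cf\ Lemma~\ref{le-disc2points} and the remark after~\eqref{eq-defN}), and $\{\varphi_\sigma\}_{\sigma>0}\subset W^{1,2}(\R)$ is a family of cut-off profiles that tends to the constant $1$ pointwise while $\|\varphi_\sigma'\|_{L^2(\R)}^2 \to 0$. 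The concrete choice $\varphi_\sigma(x_1):=\e^{-\sigma|x_1|}$ works and gives $\|\varphi_\sigma'\|_{L^2(\R)}^2 = \sigma$.

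Next I would compute the Rayleigh quotient. Plugging $\psi_\sigma$ into $\mathcal{E}_{\alpha,V_+,V_-}$, using the normalization $\|\phi_0\|_{L^2(\R)}=1$, the identity $\varepsilon_\alpha[\phi_0]=\xi_0$, and the evenness of $\phi_0$ (so that $|\phi_0(+a)|^2=|\phi_0(-a)|^2=:c_0^2>0$), a routine separation-of-variables calculation yields
$$
  \mathcal{E}_{\alpha,V_+,V_-}[\psi_\sigma] - \xi_0 \, \|\psi_\sigma\|_{L^2}^2
  = \|\varphi_\sigma'\|_{L^2(\R)}^2
  + c_0^2 \int_\R (V_+ + V_-)(x_1) \, |\varphi_\sigma(x_1)|^2 \, \D x_1
  \,.
$$
Because $V_++V_-\in L^1(\R)$, the integrand is dominated by the integrable majorant $|V_++V_-|$, so dominated convergence gives
$$
  \lim_{\sigma\to 0^+} \int_\R (V_++V_-) \, |\varphi_\sigma|^2
  = \int_\R (V_++V_-) < 0
  \,.
$$
Combined with $\|\varphi_\sigma'\|_{L^2(\R)}^2=\sigma\to 0$, this makes the right-hand side strictly negative for every sufficiently small $\sigma>0$. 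The variational principle then forces $\inf\sigma(H_{\alpha,V_+,V_-})<\xi_0$, proving the first claim.

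For the second assertion~\eqref{DiscSpec}, I would simply invoke Theorem~\ref{Thm.ess}: under the vanishing hypothesis~\eqref{vanish}, the essential spectrum equals $[\xi_0,\infty)$, so any spectrum strictly below $\xi_0$ must be discrete and of finite multiplicity. The only real technical point is the tension between the kinetic penalty $\|\varphi_\sigma'\|_{L^2(\R)}^2$ and the need for $|\varphi_\sigma|^2$ to capture the negativity of $\int(V_++V_-)$; integrability of $V_++V_-$ removes any need to keep $\|\varphi_\sigma\|_{L^2(\R)}$ under control (so $\varphi_\sigma\notin L^2(\R)$ would still be fine, though the exponential choice is comfortably in $L^2$), and the exponential family balances the two requirements in one stroke.
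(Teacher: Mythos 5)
Your proof is correct and follows essentially the same route as the paper's: a variational argument with the separated test function $\varphi(x_1)\phi_0(x_2)$, where the profile $\varphi$ tends to the constant $1$ while $\|\varphi'\|_{L^2(\R)}^2\to0$, yielding $Q[\psi]=\|\varphi'\|_{L^2(\R)}^2+|\phi_0(a)|^2\int_\R(V_++V_-)|\varphi|^2$ and dominated convergence; you merely replace the paper's piecewise-linear tent function~\eqref{capacity} by the exponentials $\e^{-\sigma|x_1|}$, a cosmetic change. One small caveat: the parenthetical remark that $\varphi_\sigma\notin L^2(\R)$ "would still be fine" is not quite right, since the test function must belong to $\Dom(\mathcal{E}_{\alpha,V_+,V_-})=W^{1,2}(\R^2)$, which forces $\varphi_\sigma\in L^2(\R)$ — but your actual choice satisfies this, so the argument is unaffected.
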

\begin{proof}
The proof is based on the variational argument. Our aim is to find
a test function $\psi \in W^{1,2}$ such that
$$
  Q[\psi] :=
  \mathcal{E}_{\alpha,V_+,V_-}[\psi] - \xi_0 \, \|\psi\|_{L^2(\R^2)}^2 < 0
  \,.
$$
For any $n\in\N \setminus \{0\}$, we set
$$
  \psi_n(x) := \varphi_n(x_1) \phi_0(x_2)
  \,,
$$
where~$\phi_0$ is, as before, the positive eigenfunction of
$-\Delta_\alpha^\R$, normalized to~$1$ in $L^2(\R)$, and
\begin{equation}\label{capacity}
  \varphi_n(x_1) :=
  \begin{cases}
    1
    & \mbox{if} \quad |x_1| \leq n \,, \\
    \displaystyle
    \frac{2n - |x_1|}{n}
    & \mbox{if} \quad n \leq |x_1| \leq 2 n \,, \\
    0
    &\mbox{otherwise} \,.
  \end{cases}
\end{equation}
Obviously, $\psi_n \in W^{1,2}$. Using
$\varepsilon_\alpha[\phi_0]=\xi_0 \|\phi_0\|_{L^2(\R)}^2$ and the
fact that~$\phi_0$ is even, it is easy to check the identity
$$
  Q[\psi_n]
  = \int_\R |\varphi_n'|^2
  + |\phi_0(a)|^2 \int_\R (V_- + V_+) \, |\varphi_n|^2
  \,.
$$
As $n\to\infty$, the first term on the right hand side tends to
zero, while the second converges (by dominated convergence
theorem) to a multiple of $\int_\R (V_- + V_+)<0$. Hence,
$Q[\psi_n]$ is negative for~$n$ sufficiently large.
\end{proof}
\begin{remark}
The integrability of~$V_++V_-$ is just a technical assumption in
Theorem~\ref{Thm.disc}. It is clear from the proof that it is only
important to ensure that the quantity
$$
  \int_\R (V_- + V_+) \, |\varphi_n|^2
$$
becomes negative as $n \to \infty$, the value~$-\infty$ for the
limit being admissible in principle. For instance, one can
alternatively assume that~$V_++V_-$ is non-trivial and
non-positive on~$\Omega_0$ for the present proof to work.
\end{remark}
%

\subsection{Embedded eigenvalues}\label{sec-embedded}
%
The aim of this section is to show that the system with mirror
symmetry (\cf~Section~\ref{Sec.mirror}) possesses embedded
eigenvalues under certain assumptions. For $V_0\in L^\infty(\R)$
vanishing at infinity,
we consider the Hamiltonian $H_{\alpha,V_0}
\equiv H_{\alpha, V_0,V_0}$. Recall that the essential spectrum
of~$H_{\alpha,V_0}$ is given by~\eqref{EssSpec}.

We simultaneously consider an auxiliary operator $H^D _{\alpha
,V_0}$ on  $L^2(\R\times \R^+)$, which acts as $H_{\alpha,V_0}$ on
$\R\times \R^+$, subject to Dirichlet boundary conditions on
$\R\times\{0\}$. It is introduced as the operator on $L^2(\R\times
\R^+)$ associated with the form
\begin{align*}
  \mathcal{E}_{\alpha,V_0}^D[\psi]
  & := \int_{\R\times \R^+} |\nabla\psi|^2
  + \int_{\R\times\{a\}} (V_0 - \alpha) \, |I_+ \psi|^2
  \,,
  \\
  \Dom(\mathcal{E}_{\alpha,V_0}^D)
  & := W_0^{1,2}(\R\times \R^+)
  \,,
\end{align*}
where we keep the same notation $I_+$ for the embedding $I_+ \,:\,
W^{1,2}_0(\R\times \R^+) \hookrightarrow L^2_+$. Since~$V_0$
vanishes at infinity, it can be shown in the same way as in
Section~\ref{sec-essential} that
$$
  \sigma _{\mathrm{ess}} (H^D _{\alpha ,V})
  =[\mu_0,\infty )
\,,
$$
where $\mu _0 \in (\xi_0,0]$ is the spectral threshold of the
one-dimensional operator~$h_\alpha^D$ on $L^2(\R_+)$ associated
with the form
\begin{equation*}
  \varepsilon_{\alpha}^D[\phi]
  := \int_{\R_+} |\phi'|^2
  - \alpha \, |\phi(a)|^2
  \,, \qquad
  \Dom(\varepsilon_{\alpha}^D)
  := W_0^{1,2}(\R_+)
  \,.
\end{equation*}
\begin{proposition} \label{prop-Dirichletev}
One has $
  \sigma _{\mathrm{ess}} (h_\alpha^D)
  = [0,\infty )
$
and
$$
  \sigma _{\mathrm{disc}} (h_\alpha^D)  =
  \begin{cases}
    \{\xi_1\}
    &\mbox{if} \quad \alpha a >1
    \,,
    \\
    \emptyset
    &\mbox{if} \quad \alpha  a \leq 1
    \,.
  \end{cases}
$$
Consequently,
\begin{equation}\label{eq-defmu}
\mu_0 =
\begin{cases}
 \xi_1
  &\mbox{if} \quad \alpha a >1
  \,,
  \\
 0
  &\mbox{if} \quad \alpha  a \leq 1
  \,.
\end{cases}
\end{equation}
\end{proposition}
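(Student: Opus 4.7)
The plan is to exploit the mirror symmetry $\phi(x)\mapsto\phi(-x)$ of $-\Delta_\alpha^\R$ in order to identify $h_\alpha^D$ with the restriction of $-\Delta_\alpha^\R$ to the odd subspace of $L^2(\R)$. Concretely, I would introduce the map $U\,:\,L^2(\R_+)\to L^2_{\mathrm{odd}}(\R)$ defined by $(U\phi)(x):=\frac{1}{\sqrt{2}}\,\sgn(x)\,\phi(|x|)$; this is unitary, and the key observation is that $\phi\in W_0^{1,2}(\R_+)$ precisely when the odd extension $\tilde\phi=\sqrt{2}\,U\phi$ lies in $W^{1,2}(\R)$, the Dirichlet condition at the origin being exactly what is needed so that $\tilde\phi$ is continuous there. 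A short computation using $|\tilde\phi(\pm a)|^2=|\phi(a)|^2$ then gives $\varepsilon_\alpha[U\phi]=\varepsilon_\alpha^D[\phi]$ and $\|U\phi\|_{L^2(\R)}=\|\phi\|_{L^2(\R_+)}$, whence $h_\alpha^D$ is unitarily equivalent to the reduction of $-\Delta_\alpha^\R$ to $L^2_{\mathrm{odd}}(\R)$.

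With this reduction in hand, the discrete-spectrum part of the claim follows immediately from Lemma~\ref{le-disc2points} together with the parity information collected in Section~\ref{sec-Hamiltonian}: the ground state $\phi_0$ is even, while the excited state $\phi_1$, present precisely when $\alpha a>1$, is odd. Consequently, passing to the odd subspace retains only the eigenvalue $\xi_1$ in the discrete spectrum when $\alpha a>1$ and leaves the discrete spectrum empty otherwise.

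For the essential spectrum, the inclusion $\sigma_{\mathrm{ess}}(h_\alpha^D)\subseteq[0,\infty)$ follows from $\sigma_{\mathrm{ess}}(h_\alpha^D)\subseteq\sigma_{\mathrm{ess}}(-\Delta_\alpha^\R)=[0,\infty)$, since the spectrum of the restriction to a reducing subspace cannot exceed that of the whole operator. For the reverse inclusion $[0,\infty)\subseteq\sigma_{\mathrm{ess}}(h_\alpha^D)$ I would argue either by noting that the form perturbation $\phi\mapsto-\alpha|\phi(a)|^2$ is of rank one, hence form-compact relative to the free half-line Dirichlet Laplacian, whose essential spectrum is already $[0,\infty)$, or by constructing a Weyl-type singular sequence in the spirit of Theorem~\ref{Thm.ess} with test functions of the form $\psi_n(x)=n^{-1/2}\varphi(x/n-n)\sin(kx)$ whose support drifts to $+\infty$ (automatically keeping it inside $\R_+$ and away from the interaction point $a$). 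Combining the two assertions yields $\mu_0=\inf\sigma(h_\alpha^D)$ equal to $\xi_1$ when $\alpha a>1$ and to $0$ otherwise, which is~\eqref{eq-defmu}. The main delicate step is the domain identification $U(W_0^{1,2}(\R_+))=W^{1,2}(\R)\cap L^2_{\mathrm{odd}}(\R)$, but this is a classical fact about odd reflection in one-dimensional Sobolev spaces and causes no real difficulty.
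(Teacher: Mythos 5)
Your proof is correct and follows essentially the same route as the paper: both identify discrete eigenvalues of $h_\alpha^D$ with the odd-parity (equivalently, vanishing at the origin) eigenfunctions of $-\Delta_\alpha^\R$, so that only $\phi_1$, present iff $\alpha a>1$, survives. You merely make the odd-extension unitary equivalence explicit and supply self-contained alternatives for the essential spectrum where the paper defers to ``the methods of Section~\ref{sec-essential}.''
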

\begin{proof}
By the methods of Section~\ref{sec-essential}, it is easy to see
that the essential spectrum of~$h_\alpha^D$ coincides with the
non-negative semi-axis.
Note that the existence of a negative eigenvalue of~$h_\alpha^D$
is equivalent to the existence of an eigenfunction of~$-\Delta
_\alpha ^\R$ corresponding to a negative eigenvalue and vanishing
on $\R\times\{0\}$. The latter holds if, and
only if,  the operator $
 -\Delta _\alpha ^\R$ possesses the \emph{odd}
eigenfunction $\phi_1$, \ie\ $\alpha a
 >1$. This means that  $h_\alpha ^D$ has one negative eigenvalue
 which is given by $\xi_1$ if, and only if,
 $\alpha a >1 $; otherwise $\sigma _{\mathrm{disc}}(h_\alpha ^D)
 =\emptyset$.
\end{proof}

\begin{theorem}[Embedded eigenvalues]\label{Thm.mirror}
Let~$V_0$ be vanishing at infinity~\eqref{vanish}.
Assume that $H^D _{\alpha
,V_0}$ possesses a (discrete) eigenvalue~$\nu$ below~$\mu_0$.
Then~$\nu$ is an eigenvalue of $H_{\alpha,V_0}$. More
specifically, if $\nu\geq\xi_0$ (respectively, $\nu<\xi_0$),
then~$\nu$ is an embedded (respectively, discrete) eigenvalue of
$H_{\alpha,V_0}$.
\end{theorem}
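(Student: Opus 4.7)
The plan is to exploit the mirror symmetry $V_+=V_-=V_0$ to decompose $H_{\alpha,V_0}$ into even and odd sectors with respect to the reflection $U\psi(x_1,x_2):=\psi(x_1,-x_2)$, and to identify the odd sector with the Dirichlet operator $H^D_{\alpha,V_0}$ on the upper half-plane. Concretely, $U$ commutes with $H_{\alpha,V_0}$ since it preserves both the kinetic form and the boundary integrals (the two lines $\Sigma_\pm$ are swapped and $V_+=V_-$), so $L^2(\R^2)=L^2_{\mathrm{e}}\oplus L^2_{\mathrm{o}}$ reduces the operator. Restricting to the odd sector and using that an odd $W^{1,2}$-function has a vanishing trace on $\R\times\{0\}$, I get a unitary equivalence with $H^D_{\alpha,V_0}$ (up to a factor $\sqrt{2}$) through the map $\psi\mapsto \sqrt{2}\,\psi|_{\R\times\R^+}$.

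With this identification in mind, the first step is to take any eigenfunction $\psi^D\in W^{1,2}_0(\R\times\R^+)$ of $H^D_{\alpha,V_0}$ with eigenvalue $\nu$ and form its odd extension $\tilde\psi$. The Dirichlet condition at $x_2=0$ ensures $\tilde\psi\in W^{1,2}=\Dom(\mathcal{E}_{\alpha,V_0})$. Second, a direct parity computation gives $\mathcal{E}_{\alpha,V_0}[\tilde\psi]=2\,\mathcal{E}^D_{\alpha,V_0}[\psi^D]$ and $\|\tilde\psi\|_{L^2}^2=2\,\|\psi^D\|_{L^2(\R\times\R^+)}^2$. Third, for any test function $\eta\in W^{1,2}$ decomposed as $\eta=\eta_{\mathrm{e}}+\eta_{\mathrm{o}}$, the contribution $\mathcal{E}_{\alpha,V_0}(\tilde\psi,\eta_{\mathrm{e}})-\nu(\tilde\psi,\eta_{\mathrm{e}})$ vanishes by orthogonality of sectors (the bulk integrands become odd in $x_2$, while the two boundary contributions on $\R\times\{\pm a\}$ cancel because $\tilde\psi(x_1,-a)=-\tilde\psi(x_1,a)$ whereas $\eta_{\mathrm{e}}(x_1,-a)=\eta_{\mathrm{e}}(x_1,a)$). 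The remaining odd-odd part is $2\bigl[\mathcal{E}^D_{\alpha,V_0}(\psi^D,\eta_{\mathrm{o}}|_{x_2>0})-\nu(\psi^D,\eta_{\mathrm{o}}|_{x_2>0})\bigr]=0$, since $\eta_{\mathrm{o}}|_{x_2>0}\in W^{1,2}_0(\R\times\R^+)$ is in the form domain of $H^D_{\alpha,V_0}$. Hence $\tilde\psi\in\Dom(H_{\alpha,V_0})$ and $H_{\alpha,V_0}\tilde\psi=\nu\tilde\psi$.

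Finally, the dichotomy follows from the essential spectrum identity~\eqref{EssSpec}: if $\nu<\xi_0$, the eigenvalue lies below $\sigma_{\mathrm{ess}}(H_{\alpha,V_0})=[\xi_0,\infty)$ and is therefore discrete of finite multiplicity, while if $\nu\geq\xi_0$ it lies inside the essential spectrum and is embedded. The main obstacle I anticipate is purely verificational: one must check that odd extension across the Dirichlet line preserves $W^{1,2}$-regularity (routine from the trace-zero characterization of $W^{1,2}_0$) and that the $\delta$-type form indeed splits cleanly across parities, exploiting the crucial hypothesis $V_+=V_-$; no issue arises with the interface at $x_2=-a$ because the form definition in Section~\ref{sec-Hamiltonian} encodes the singular interactions through the traces $I_\pm$ only, and these are controlled by Lemma~\ref{Lem.Sobolev} on each half-plane.
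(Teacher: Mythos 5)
Your proposal is correct and follows essentially the same route as the paper's proof: take the odd extension of a Dirichlet eigenfunction, observe that by mirror symmetry it is an eigenfunction of $H_{\alpha,V_0}$, and locate $\nu$ relative to the essential spectrum $[\xi_0,\infty)$. The paper states this in three sentences; you simply fill in the verificational details (the parity decomposition $\eta=\eta_{\mathrm{e}}+\eta_{\mathrm{o}}$, the cancellation of even--odd cross terms in the sesquilinear form, and the $W^{1,2}$-regularity of the odd extension), all of which are sound.
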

\begin{proof}
Let~$\psi$ be an eigenfunction of~$H^D _{\alpha ,V_0}$
corresponding to~$\nu$. Due to the mirror symmetry, the odd
extension of~$\psi$ to~$\R^2$ is an eigenfunction
of~$H_{\alpha,V_0}$ corresponding to the same value~$\nu$. The
rest follows from the fact that the essential spectrum
of~$H^D_{\alpha ,V_0}$ is strictly contained in the essential
spectrum of~$H_{\alpha ,V_0}$
\end{proof}

The following result makes Theorem~\ref{Thm.mirror} non-void.
\begin{proposition}\label{Prop.non-void}
Assume $\alpha a > 1$. Let
$V_0 \in L^1(\R)$ be vanishing at infinity and $\int_\R V_0(x) \,
\D x < 0$. Then $\sigma_\mathrm{disc}(H^D _{\alpha ,V_0}) \not=
\emptyset$.
\end{proposition}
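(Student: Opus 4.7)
The plan is to mimic the variational argument used in Theorem~\ref{Thm.disc}, but with the test function built around the \emph{odd} excited state of $-\Delta_\alpha^\R$ in place of the even ground state. By Proposition~\ref{prop-Dirichletev}, the assumption $\alpha a>1$ guarantees that $h_\alpha^D$ on $L^2(\R_+)$ has a discrete eigenvalue, which is precisely $\xi_1$, and that the essential spectrum of $H^D_{\alpha,V_0}$ coincides with $[\mu_0,\infty)=[\xi_1,\infty)$ (thanks to $V_0$ vanishing at infinity). Denote by~$\phi_1^D\in W_0^{1,2}(\R_+)$ the corresponding normalized eigenfunction (the restriction to $\R_+$ of the odd eigenfunction~$\phi_1$ of $-\Delta_\alpha^\R$). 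The goal is therefore to exhibit $\psi\in\Dom(\mathcal{E}^D_{\alpha,V_0})$ with $\mathcal{E}^D_{\alpha,V_0}[\psi]-\xi_1\,\|\psi\|_{L^2(\R\times\R_+)}^2<0$, from which the claim follows by the minimax principle.

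Concretely, I would take the product trial functions
$$
  \psi_n(x) := \varphi_n(x_1)\,\phi_1^D(x_2)
  \,,
$$
with the cut-off $\varphi_n$ defined by~\eqref{capacity}. Clearly $\psi_n\in W_0^{1,2}(\R\times\R_+)$. A Fubini computation, together with the identity $\varepsilon_\alpha^D[\phi_1^D]=\xi_1\,\|\phi_1^D\|_{L^2(\R_+)}^2=\xi_1$ and the normalization of $\phi_1^D$, collapses the shifted quadratic form to
$$
  \mathcal{E}^D_{\alpha,V_0}[\psi_n]-\xi_1\,\|\psi_n\|_{L^2(\R\times\R_+)}^2
  = \|\varphi_n'\|_{L^2(\R)}^2
  + |\phi_1^D(a)|^2 \int_\R V_0\,|\varphi_n|^2
  \,.
$$
The first term is $2/n$, hence tends to zero. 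For the second, $V_0\in L^1(\R)$ and $|\varphi_n|\le 1$ allow the dominated convergence theorem to yield the limit $|\phi_1^D(a)|^2\int_\R V_0<0$. Thus the left-hand side is strictly negative for all sufficiently large~$n$.

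The only delicate point — and the main thing to check — is that $\phi_1^D(a)\neq 0$, without which the perturbation by $V_0$ contributes nothing at leading order. This is immediate from the explicit form of~$\phi_1^D$: on $(0,a)$ it solves $-\phi''=\xi_1\phi$ with Dirichlet data at $0$, whence $\phi_1^D(x_2)=C\sinh(\kappa_1 x_2)$ with $\kappa_1:=\sqrt{-\xi_1}>0$ and $C\neq 0$, giving $\phi_1^D(a)=C\sinh(\kappa_1 a)\neq 0$. Beyond this observation, the argument is a routine transcription of the proof of Theorem~\ref{Thm.disc} to the half-plane setting, and I anticipate no further obstacle.
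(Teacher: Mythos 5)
Your proposal is correct and follows essentially the same route as the paper's proof: both use the product test function $\varphi_n(x_1)\phi_1(x_2)$ with the cut-off from~\eqref{capacity} and the restriction of the odd eigenfunction~$\phi_1$ to $\R_+$, mirroring the argument of Theorem~\ref{Thm.disc}. The only difference is cosmetic: you spell out the collapse of the shifted quadratic form and explicitly verify $\phi_1^D(a)\neq 0$, a point the paper leaves implicit.
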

\begin{proof}
It follow from Proposition~\ref{prop-Dirichletev} that,
under the condition $\alpha a > 1$,
$h_\alpha^D$~possesses a negative eigenvalue~$\xi_1$
with eigenfunction~$\phi_1$ (restricted to $\R\times\R_+$).
Then the claim follows by using the test function
$\psi(x):=\varphi_n(x_1)\phi_1(x_2)$, where~$\varphi_n$ is
introduced in~\eqref{capacity}, as in the proof of
Theorem~\ref{Thm.disc}.
\end{proof}

Finally, to ensure the existence of embedded eigenvalues by
Theorem~\ref{Thm.mirror}, it remains to verify that the discrete
eigenvalue~$\nu$ of $H^D _{\alpha ,V_0}$ (which exists under the
hypotheses of Proposition~\ref{Prop.non-void}) can be made larger
than or equal to~$\xi_0$. However, this happens, for instance, in
the weak-coupling regime.
\begin{corollary} \label{th-emb}
Let $a \alpha > 1$. Let $V_0 \in L^1(\R)$ be vanishing at infinity
and $\int_\R V_0(x) \, \D x < 0$. Then there exists $\varepsilon
>0$ such that $H_{\alpha ,\varepsilon V_0}$ has at least one
embedded eigenvalue in the interval $[\xi_0,\mu_0)$.
\end{corollary}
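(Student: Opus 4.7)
The plan is to combine Theorem~\ref{Thm.mirror} with a weak-coupling analysis of the rescaled potential $\varepsilon V_0$. Since $\alpha a>1$, Proposition~\ref{prop-Dirichletev} gives $\mu_0=\xi_1$, so the target interval is $[\xi_0,\xi_1)$, and the entire argument will rest on the strict inequality $\xi_0<\xi_1$. For each $\varepsilon>0$ the potential $\varepsilon V_0$ inherits all the hypotheses of Proposition~\ref{Prop.non-void}, so $H^D_{\alpha,\varepsilon V_0}$ has at least one discrete eigenvalue $\nu(\varepsilon)<\mu_0$; by Theorem~\ref{Thm.mirror} this $\nu(\varepsilon)$ is also an eigenvalue of $H_{\alpha,\varepsilon V_0}$, and it is embedded and lies in $[\xi_0,\mu_0)$ precisely when $\nu(\varepsilon)\geq \xi_0$.

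Thus it suffices to establish the weak-coupling asymptotics $\nu(\varepsilon)\to \mu_0=\xi_1$ as $\varepsilon\to 0^+$, since the strict inequality $\xi_0<\xi_1$ then forces $\nu(\varepsilon)\in (\xi_0,\xi_1)$ for all sufficiently small $\varepsilon>0$. The upper bound $\nu(\varepsilon)<\mu_0$ is already built into the previous step. For the matching lower bound I would use the elementary form estimate
\begin{equation*}
 \mathcal{E}^D_{\alpha,\varepsilon V_0}[\psi]
 \,\geq\, \int_{\R\times \R^+}|\nabla\psi|^2
 -(\alpha+\varepsilon\|V_0\|_\infty)\int_{\R\times\{a\}}|I_+\psi|^2
 \,=\, \mathcal{E}^D_{\alpha+\varepsilon\|V_0\|_\infty,0}[\psi],
\end{equation*}
which follows from bounding $\varepsilon\int V_0|I_+\psi|^2$ below by $-\varepsilon\|V_0\|_\infty\|I_+\psi\|_{L^2_+}^2$. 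The min-max principle then gives $\nu(\varepsilon)\geq \mu_0(\alpha+\varepsilon\|V_0\|_\infty)$, where $\mu_0(\beta)$ denotes the spectral threshold of the one-dimensional operator $h^D_\beta$. Continuity of $\beta\mapsto \mu_0(\beta)$ at $\beta=\alpha$, which follows from equation~\eqref{eq-discev} restricted to the odd sector (or equivalently from~\eqref{eq-defmu} together with the continuous dependence of $\xi_1(\beta)$ on $\beta$), produces $\mu_0(\alpha+\varepsilon\|V_0\|_\infty)\to \mu_0$ as $\varepsilon\to 0^+$, completing the argument.

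The main obstacle I anticipate is precisely this last continuity statement: one must verify that the odd negative eigenvalue of $-\Delta_\beta^\R$ persists and varies continuously under small perturbations of $\beta$ around $\alpha$, which uses the strict inequality $\alpha a>1$ in an essential way to keep the eigenvalue bounded away from the essential spectrum threshold of $h^D_\beta$. All the remaining ingredients---existence of $\nu(\varepsilon)$, the form inequality, and the lift of the Dirichlet eigenvalue to an eigenvalue of $H_{\alpha,\varepsilon V_0}$ via mirror symmetry---are immediate consequences of results already established in the excerpt.
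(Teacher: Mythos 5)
Your proof is correct, and it takes essentially the same route the paper sketches (the paper simply asserts that the weak-coupling regime does the job, without spelling out the details). Your contribution is the elementary comparison $\mathcal{E}^D_{\alpha,\varepsilon V_0}\geq \mathcal{E}^D_{\alpha+\varepsilon\|V_0\|_\infty,0}$, which via the min-max principle pins the discrete eigenvalue of $H^D_{\alpha,\varepsilon V_0}$ between $\mu_0(\alpha+\varepsilon\|V_0\|_\infty)$ and $\mu_0(\alpha)$; continuity of $\beta\mapsto\xi_1(\beta)$ at $\beta=\alpha$ (guaranteed by the strict inequality $\alpha a>1$, which keeps the odd eigenvalue of $-\Delta^\R_\beta$ away from the threshold) then forces the eigenvalue above $\xi_0$ for small $\varepsilon$. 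This is a clean, self-contained way to make the paper's terse ``weak coupling'' remark precise, and all ingredients (Proposition~\ref{Prop.non-void}, Theorem~\ref{Thm.mirror}, the standing hypothesis $V_0\in L^\infty(\R)$) are used exactly as the paper intends.
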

%

\section{Hardy inequalities}\label{sec-Hardy}
%
In this section we study the case when~$V_+$ and~$V_-$ are
non-negative. It is easy to see that, in this situation, the
spectrum does not start below~$\xi_0$. The purpose of this
subsection is to show that a stronger result holds in the latter
setting. We derive a functional, Hardy-type inequality for
$H_{\alpha,V_+,V_-}$ with non-trivial non-negative~$V_+$
and~$V_-$, quantifying the repulsive character of the line
interactions in this case.

A Hardy-type inequality follows immediately from
Lemma~\ref{Lem.lower}. Indeed, neglecting the kinetic term
in~\eqref{pre.Hardy}, we arrive at
\begin{theorem}[Local Hardy inequality]\label{Thm.Hardy}
Assume that~$V_\pm$ are non-negative and that~$V_-$ or~$V_+$ is
non-trivial. Then
$$
  H_{\alpha,V_+,V_-} - \xi_0 \ \geq \ \lambda
$$
in the sense of quadratic forms, where~$\lambda$ is a non-trivial
and non-negative function, \cf~\eqref{lambda}.
\end{theorem}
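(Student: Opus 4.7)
The plan is essentially to read off the theorem from the fundamental lower bound established in Lemma~\ref{Lem.lower} (the ``pre-Hardy'' inequality), since the function~$\lambda$ has already been constructed and its basic properties have been established in Section~2.3.

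First I would recall the statement of Lemma~\ref{Lem.lower}: for every $\psi \in W^{1,2}$,
\begin{equation*}
  \mathcal{E}_{\alpha,V_+,V_-}[\psi] - \xi_0 \, \|\psi\|_{L^2}^2
  \geq \int_{\R^2} |\partial_1\psi|^2 + \int_{\R^2} \lambda \, |\psi|^2
  \,.
\end{equation*}
Since the kinetic term $\int_{\R^2}|\partial_1 \psi|^2$ is manifestly non-negative, we may simply discard it, obtaining
\begin{equation*}
  \mathcal{E}_{\alpha,V_+,V_-}[\psi] - \xi_0 \, \|\psi\|_{L^2}^2
  \geq \int_{\R^2} \lambda \, |\psi|^2
  \,,
\end{equation*}
which is precisely the claimed inequality $H_{\alpha,V_+,V_-} - \xi_0 \geq \lambda$ in the sense of quadratic forms, since $\mathcal{E}_{\alpha,V_+,V_-}$ is the form associated with $H_{\alpha,V_+,V_-}$ and $\Dom(\mathcal{E}_{\alpha,V_+,V_-}) = W^{1,2}$ is dense enough to determine the operator ordering.

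It remains to justify that $\lambda$ is non-trivial and non-negative. This was in fact already observed in the paragraph preceding~\eqref{lambda}: under the assumption that $V_\pm \geq 0$, the variational definition of~$\xi_0$ gives $\tilde{\lambda}(v_+,v_-) \geq 0$ for all non-negative $v_\pm$, with equality at $(0,0)$; moreover, $\tilde{\lambda}(v_+,v_-) > 0$ as soon as $v_+ > 0$ or $v_- > 0$, the strict positivity being forced by the fact that the ground state~$\phi_0$ of $-\Delta_\alpha^\R$ is strictly positive (so the equality case $\phi_0(\pm a)=0$ cannot occur). Hence, if~$V_+$ or~$V_-$ is non-trivial on a set of positive measure, then $\lambda(x) = \tilde{\lambda}(V_+(x_1),V_-(x_1))$ is non-negative and strictly positive on a set of positive Lebesgue measure in~$\R^2$.

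I do not expect any real obstacle here; the whole content of the theorem has already been distilled into Lemma~\ref{Lem.lower} together with the analysis of~$\tilde{\lambda}$ in Section~2.3. The only subtlety worth noting in the write-up is the word ``local'' in the title: the bound is genuinely local in~$x_1$ because $\lambda$ need not be bounded away from zero globally (indeed, if $V_\pm$ vanish at infinity then so does~$\lambda$), which is why this inequality is useful for controlling spectral stability under additional bounded perturbations but does not yield a global Hardy weight.
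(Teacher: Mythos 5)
Your proof is correct and follows exactly the paper's own route: dropping the manifestly non-negative kinetic term $\int_{\R^2}|\partial_1\psi|^2$ from the pre-Hardy bound of Lemma~\ref{Lem.lower}, and then invoking the non-negativity and non-triviality of $\lambda$ established in Section~2.3 via the analysis of $\tilde{\lambda}$. The extra remarks you add (the strict positivity of $\phi_0$ forcing $\tilde\lambda(v_+,v_-)>0$ when one argument is positive, and the observation that $\lambda$ vanishes at infinity when $V_\pm$ do) are consistent elaborations of what the paper already records there.
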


This Hardy inequality is called \emph{local} since it reflects the
local behaviour of the functions~$V_\pm$. In particular, if~$V_\pm$
is compactly supported then ~$\lambda$ is compactly supported as
the function of the first variable.

In any case, a \emph{global} Hardy inequality follows by applying
the classical Hardy inequality.
\begin{theorem}[Global Hardy inequality]\label{Thm.Hardy.global}
Assume that~$V_\pm$ are non-ne\-gative, and that there exists
$x_1^0\in\R$ and positive numbers~$V_0$ and~$R$ such that
$$
  \forall x_1 \in (x_1^0-R,x_1^0+R), \qquad
  V_+(x_1) \geq V_0
  \quad\mbox{or}\quad
  V_-(x_1) \geq V_0
  \,.
$$
Then
$$
  H_{\alpha,V_+,V_-} - \xi_0 \ \geq \ c \;\! \rho
  \qquad\mbox{with}\qquad
  \rho(x) := \frac{1}{1+(x_1-x_1^0)^2}
$$
in the sense of quadratic forms. Here the constant~$c$ depends
on~$V_0$, $R$ and~$a$.
\end{theorem}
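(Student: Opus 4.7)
The plan is to leverage the fundamental lower bound of Lemma~\ref{Lem.lower} and reduce the assertion to a one-dimensional Hardy-type inequality, which will in turn be obtained from the classical half-line Hardy inequality with Dirichlet boundary. Setting $I := (x_1^0 - R, x_1^0+R)$ and $\lambda_0 := \tilde\lambda(V_0, 0)$, the hypothesis together with the symmetry and the non-decreasing monotonicity of $\tilde\lambda$ yields $\tilde\lambda(V_+(x_1), V_-(x_1)) \geq \lambda_0$ for every $x_1 \in I$. Moreover $\lambda_0 > 0$ by the strict positivity property of $\tilde\lambda$ at non-trivial non-negative arguments established immediately after~\eqref{lambda.tilde}. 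Lemma~\ref{Lem.lower} then gives, for every $\psi \in W^{1,2}$,
\begin{equation*}
  \mathcal{E}_{\alpha,V_+,V_-}[\psi] - \xi_0 \|\psi\|_{L^2}^2
  \,\geq\, \int_{\R^2} |\partial_1\psi|^2 + \lambda_0 \int_{I \times \R} |\psi|^2.
\end{equation*}

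After translating so that $x_1^0 = 0$, the conclusion will follow from Fubini's theorem and the one-dimensional inequality
\begin{equation*}
  \int_\R |f'(t)|^2\,\D t + \lambda_0 \int_{-R}^{R} |f(t)|^2\,\D t
  \,\geq\, c \int_\R \frac{|f(t)|^2}{1+t^2}\,\D t,
\end{equation*}
valid for every $f \in W^{1,2}(\R)$, with a positive constant $c$ depending only on $\lambda_0$ and $R$. Indeed, applying this slicewise to $f(t) := \psi(t, x_2)$, which lies in $W^{1,2}(\R)$ for almost every $x_2$, and integrating in $x_2$ gives the stated bound with $\rho(x) = 1/(1+x_1^2)$; the dependence on~$a$ then enters only through $\lambda_0$, in agreement with the statement.

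For the one-dimensional inequality I would split the right-hand side into the contributions of $(-R,R)$ and $|t|>R$. The first piece is controlled by $\int_{-R}^R |f|^2$ since $(1+t^2)^{-1} \leq 1$. For the second, I would fix a smooth cutoff $\phi$ with $\phi(R)=1$ and compact support in $(R-1, \infty)$ and apply the classical half-line Hardy inequality $\int_R^\infty |h'|^2 \geq \tfrac{1}{4} \int_R^\infty |h|^2/(t-R)^2$ to $h(t) := f(t) - f(R)\phi(t)$, which vanishes at $t=R$. Since $1/(1+t^2) \leq 1/(t-R)^2$ for $t > R$, this controls $\int_R^\infty |f|^2/(1+t^2)$ modulo a multiple of $|f(R)|^2$; the latter is then bounded by the one-dimensional Sobolev trace estimate $|f(R)|^2 \leq C_R\bigl(\int_{-R}^R |f|^2 + \int_{-R}^R |f'|^2\bigr)$, and an analogous argument handles $(-\infty,-R)$. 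The main obstacle is this one-dimensional inequality: the weight $(1+t^2)^{-1}$ decays too slowly for the fundamental theorem of calculus to work directly (since $\int |t|/(1+t^2)\,\D t$ diverges), which forces one through the half-line Hardy inequality with a Dirichlet boundary condition manufactured by the cutoff $\phi$, and through a Sobolev trace to absorb the boundary values $|f(\pm R)|^2$.
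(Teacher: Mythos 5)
Your proposal is correct and hinges on the same two ingredients as the paper's proof---the lower bound from Lemma~\ref{Lem.lower} and the classical half-line Hardy inequality~\eqref{HI.classical}---but you organize the reduction differently. The paper keeps everything in two dimensions and, after bounding $\rho(x)\leq 1/(x_1-x_1^0)^2$, applies the half-line Hardy inequality to $\eta\psi$, where $\eta$ is the linear ramp $\eta(x_1)=\min\{|x_1-x_1^0|/R,1\}$ that vanishes at the \emph{center} $x_1^0$ of the interval $I$; this produces the Dirichlet boundary condition automatically and there is never any boundary value of~$\psi$ to absorb. The two resulting terms are then matched to the two separate lower bounds~\eqref{bound1} (via the local Hardy inequality of Theorem~\ref{Thm.Hardy}) and~\eqref{bound2}, and the optimal interpolation parameter~$\epsilon$ is solved for explicitly, yielding a clean closed-form constant~$c$. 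You instead combine the two lower bounds into the single inequality $\mathcal{E}_{\alpha,V_+,V_-}[\psi]-\xi_0\|\psi\|^2 \geq \int|\partial_1\psi|^2 + \lambda_0\int_{I\times\R}|\psi|^2$ (which is indeed exactly Lemma~\ref{Lem.lower} after bounding $\lambda$ from below on $I\times\R$), then reduce by Fubini to a clean one-dimensional Hardy-type inequality, which you prove with cutoffs based at the \emph{endpoints} $\pm R$; this forces you through an extra Sobolev trace estimate to absorb $|f(\pm R)|^2$, a step the paper sidesteps by its choice of cutoff. Your modular reduction to a statement about functions on $\R$ is arguably tidier and easier to reuse, at the cost of a slightly longer computation and a less explicit constant; the paper's interpolation scheme is more compact and gives the explicit value $c=\lambda_0/(16(\lambda_0+1/8+1/R^2))$.
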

\begin{proof}
The proof follows the strategy developed in \cite[Sec.~3.3]{EKK}
for establishing a similar global Hardy inequality in curved
waveguides. For clarity of the exposition, we divide  the proof
into several steps. Denote $I:=(x_1^0-R,x_1^0+R)$.

\smallskip\noindent
\emph{Step 1.} The main ingredient in the proof is the classical
one-dimensional Hardy inequality
\begin{equation}\label{HI.classical}
  \forall \phi \in W_0^{1,2}(\R_+)
  \,, \qquad
  \int_{\R_+} |\phi'(x)|^2 \, \D x
  \geq \frac{1}{4} \int_{\R_+}
  \frac{|\phi(x)|^2}{x^2} \, \D x
  \,.
\end{equation}
%
We apply it in our case as follows. Let us
define an auxiliary function $\eta:\R\to\R$ by
$\eta(x_1):=|x_1-x_1^0|/R$ if $|x_1-x_1^0|<R$ and by setting it
equal to~$1$ otherwise; we keep the same notation~$\eta$ for the
function $x\mapsto\eta(x_1)$ on $\R^2$. For any $\psi \in
C_0^\infty(\R^2)$, let us write $\psi=\eta\psi+(1-\eta)\psi$.
Applying the classical Hardy inequality to the function $\eta\psi$
and using Fubini's theorem we get
\begin{align}\label{bound0}
  \int_{\R^2} \rho |\psi|^2
  &\leq  2\int_{\R^2} \frac{|\eta\psi|^2}{(x_1-x_1^0 )^2}
  +  2 \int_{\R^2} |(1-\eta)\psi|^2
  \nonumber \\
  &\leq 16 \int_{\R^2} |\partial_1\eta|^2 |\psi|^2
  + 16 \int_{\R^2} |\eta|^2 |\partial_1\psi|^2
  +  2 \int_{I \times \R} |\psi|^2
  \nonumber \\
  &\leq 16 \int_{\R^2} |\partial_1\psi|^2
  + (2+16/R^2) \int_{ I \times  \R } |\psi|^2
  \,.
\end{align}
By the density, the inequality extends to all $\psi \in
W^{1,2}=\Dom(\mathcal{E}_{\alpha,V_+,V_-})$.

\smallskip\noindent
\emph{Step 2.}~By Theorem~\ref{Thm.Hardy}, we have
\begin{equation}\label{bound1}
  \mathcal{E}_{\alpha,V_+,V_-}[\psi] - \xi_0 \, \|\psi\|_{L^2}^2
  \geq \essinf_{I \times \R} \lambda \int_{I \times \R} |\psi|^2
  \geq \lambda_0 \int_{I \times \R} |\psi|^2
\end{equation}
for every $\psi \in \Dom(\mathcal{E}_{\alpha,V_+,V_-})$, where $
  \lambda_0 := \tilde{\lambda}(V_0,0) = \tilde{\lambda}(0,V_0)
$. Of course, $\lambda_0$ is a positive number under the stated
hypotheses.

\smallskip\noindent
On the other hand, neglecting the non-negative potential term
in~\eqref{pre.Hardy}, we have
\begin{equation}\label{bound2}
  \mathcal{E}_{\alpha,V_+,V_-}[\psi] - \xi_0 \, \|\psi\|_{L^2}^2
  \geq \int_{\R^2} |\partial_1\psi|^2
\end{equation}
for every $\psi \in \Dom(\mathcal{E}_{\alpha,V_+,V_-})$.

\smallskip\noindent
\emph{Step 3.}~Interpolating between the bounds~\eqref{bound1}
and~\eqref{bound2}, and using~\eqref{bound0} in the latter, we
arrive at
\begin{equation*}
  \mathcal{E}_{\alpha,V_+,V_-}[\psi] - \xi_0 \, \|\psi\|_{L^2}^2
  \geq \frac{\epsilon}{16}
  \int_{\R^2} \rho |\psi|^2
  \\
  + \left[
  (1-\epsilon) \lambda_0 - \epsilon \left(\frac{1}{8}+\frac{1}{R^2}\right)
  \right]
  \int_{I \times \R} |\psi|^2
\end{equation*}
for every $\psi \in \Dom(\mathcal{E}_{\alpha,V_+,V_-})$ and
$\epsilon\in(0,1)$. It is clear that the right hand side of this
inequality can be made non-negative by choosing~$\epsilon$
sufficiently small. Choosing~$\epsilon$ such that the expression
in the square brackets vanishes, the inequality passes to the
claim of the theorem with
$$
  c := \frac{\lambda_0/16}{\lambda_0 + 1/8 + 1/R^2}
  \,.
$$

It remains to realize that~$\lambda_0$ depends on~$V_0$ and~$a$
through the definition~\eqref{lambda.tilde}.
\end{proof}

As a direct consequence of Theorem~\ref{Thm.Hardy.global}, we get
\begin{corollary}\label{Corol.subcritical}
Assume the hypotheses of Theorem~\ref{Thm.Hardy.global}. Let~$W$
be the multiplication operator in $L^2$ by any bounded
function~$w$ for which there exists a positive constant~$C$ such
that $ |w(x)| \leq C |x_1|^{-2}$ for a.e.\ $x\in\R^2$. Then there
exists $\epsilon_0>0$ such that for every $\epsilon < \epsilon_0$,
$$
  \inf\sigma(H_{\alpha,V_+,V_-} + \epsilon W) \geq \xi_0
  \,.
$$
\end{corollary}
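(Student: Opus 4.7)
The plan is to deduce the corollary directly from Theorem~\ref{Thm.Hardy.global} by dominating the perturbation $\epsilon W$ pointwise by a small multiple of the Hardy weight $c\rho$. Writing $\mathcal{W}_{\epsilon}$ for the quadratic form of $H_{\alpha,V_+,V_-} + \epsilon W$, the goal is to show that
\begin{equation*}
  \mathcal{W}_{\epsilon}[\psi] - \xi_0 \, \|\psi\|_{L^2}^2
  \,\geq\, 0
\end{equation*}
for every $\psi \in W^{1,2}$ whenever $\epsilon$ is sufficiently small.

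First I would establish a pointwise estimate of the form $|w(x)| \leq M \, \rho(x)$ for a.e.\ $x\in\R^2$, with $M$ depending on $\|w\|_\infty$, $C$ and $x_1^0$. The issue is that the bound $|w(x)| \leq C|x_1|^{-2}$ is singular at $x_1=0$ whereas $\rho$ is bounded, so the boundedness of $w$ has to be used to handle a neighbourhood of $\{x_1=0\}$. Concretely, one splits into the regions $|x_1|\leq 1$ and $|x_1|\geq 1$: on the first region one uses $|w(x)|\leq\|w\|_\infty$ together with $\rho(x)\geq (1+(1+|x_1^0|)^2)^{-1}$, while on the second one uses $|w(x)|\leq C|x_1|^{-2}$ together with the comparison $|x_1|^{-2}\leq 2(1+x_1^2)^{-1}\leq 2(1+(1+|x_1^0|)^2)\rho(x)$ that follows from the triangle inequality $|x_1-x_1^0|\leq |x_1|+|x_1^0|\leq (1+|x_1^0|)|x_1|$ valid for $|x_1|\geq 1$.

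Armed with this pointwise majorization, I combine it with the global Hardy inequality of Theorem~\ref{Thm.Hardy.global}: for every $\psi\in\Dom(\mathcal{E}_{\alpha,V_+,V_-}) = W^{1,2}$,
\begin{equation*}
  \mathcal{W}_{\epsilon}[\psi] - \xi_0 \, \|\psi\|_{L^2}^2
  \,\geq\, c \int_{\R^2} \rho \, |\psi|^2
  - \epsilon \int_{\R^2} |w| \, |\psi|^2
  \,\geq\, (c - \epsilon M) \int_{\R^2} \rho \, |\psi|^2
  \,.
\end{equation*}
Choosing $\epsilon_0 := c/M$, the right-hand side is non-negative for every $\epsilon<\epsilon_0$, and the variational characterization of the bottom of the spectrum yields the claim.

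I do not expect any serious obstacle here; the only point requiring a little care is the comparison between $|x_1|^{-2}$ and $\rho$ near $x_1=0$, which as explained above is resolved by invoking the hypothesis that $w$ is bounded. Everything else is a one-line consequence of the already established global Hardy inequality.
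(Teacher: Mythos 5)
Your argument is correct and is precisely the content of the paper's phrase ``as a direct consequence of Theorem~\ref{Thm.Hardy.global}'': one dominates $|w|$ pointwise by a constant multiple of $\rho$ (using the boundedness of $w$ near $x_1=0$ and the decay hypothesis away from it) and then applies the global Hardy inequality together with the variational principle. The paper gives no explicit proof, and the one you supply, including the splitting at $|x_1|=1$ and the choice $\epsilon_0=c/M$, fills the gap in exactly the intended way.
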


Assume that~$V_\pm$ vanish at infinity. Since also the
potential~$W$ of the corollary is bounded and vanishes at
infinity, it is easy to see that the essential spectrum is not
changed, \ie, $
  \sigma_\mathrm{ess}(H_{\alpha,V_+,V_-} + \epsilon W)
  = [\xi_0,\infty)
$, independently of the value of~$\epsilon$ and irrespectively of
the signs of~$V_\pm$. It follows from the corollary that a certain
critical value of~$\epsilon$ is needed in order to generate
discrete spectrum of $H_{\alpha,V_+,V_-} + \epsilon W$ if the
Hardy inequality for~$H_{\alpha,V_+,V_-}$ exists. On the other
hand, it is easy to see that $H_{\alpha,0,0} + \epsilon W$
possesses eigenvalues below~$\xi_0$ for arbitrarily
small~$\epsilon$ provided that~$W$ is non-trivial and
non-positive.

\section{Resonances induced by broken symmetry}\label{sec-resonances}
%
As was already stated (\cf~Corollary~\ref{th-emb}), the
Hamiltonian $H_{\alpha, V_0}$ of the system with mirror
symmetry~\eqref{mirror} admits embedded eigenvalues. In the
following we show that breaking this symmetry by~(\ref{eq-broken})
will turn the eigenvalues into resonances.

The strategy we employ here is as follows. Our first aim is to
show that the operator-valued function $z\mapsto R _{\alpha ;
\epsilon} (z )$ has a second sheet analytic continuation
in the following sense: for any $f\,,g\in C_0 (\R^2 ) $ the
operator $f R_{\alpha ; \epsilon}(z)g$ can be analytically
continued to the lower half-plane as a bounded operator in $L^2$.
Such a continuation we will denote as $R^\mathit{II} _{\alpha ;
\epsilon} (\cdot )\equiv R^\mathit{II} _{\alpha ; \epsilon , f,g}
(\cdot )$. Of course, the above formulation implies that the
function $z\mapsto (f,R_{\alpha ; \epsilon} (z)g)$ has the second
sheet continuation. The analogous definition will be employed for
the second sheet continuation of $R_{\alpha }(\cdot )$. To recover
resonances in the system governed by~$H_{\alpha ; \epsilon}$, we
look for poles of $R^\mathit{II} _{\alpha ; \epsilon} (\cdot )$.
These poles are defined by the condition
\begin{equation}\label{eq-kernel}
\mathrm{Ker}\, \Gamma ^\mathit{II} (z) \neq \{0\}\,, \qquad \Im
z<0 \,,
\end{equation}
where $\Gamma ^\mathit{II} (\cdot)$ is the second sheet
continuation of analytic operator valued-function $z\mapsto \Gamma
(z)$. Our aim is to find $z$ satisfying (\ref{eq-kernel}).

\subsection{The second sheet continuation of $\Gamma (\cdot)$}
%
Henceforth we assume
\begin{equation}\label{eq-assumptionV}
  V_{\pm}\, \e ^{C|x|}\in L^\infty (\R) \quad \mathrm{for}\quad
  \mathrm{some} \quad C>0
  \,.
\end{equation}

The first auxiliary statement is contained in the following lemma.
\begin{lemma} \label{le-compactness}
Suppose~\eqref{eq-assumptionV}. Then for any $i\,,j\in \{+, -\}$
and $z\in \C \setminus [\xi_0 , \infty) $ the operator $|V_i
|^{1/2} \mathrm{R}_{\alpha , i j} (z) V_j ^{1/2}$ is
Hilbert-Schmidt. Consequently,  $B(z)$ is a
Hilbert-Schmidt operator as well.
\end{lemma}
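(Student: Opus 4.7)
The plan is to compute the Hilbert--Schmidt norm of each block $|V_i|^{1/2}\mathrm{R}_{\alpha,ij}(z)V_j^{1/2}$ directly from its integral kernel, which is obtained by restricting the kernel of $R_\alpha(z)$ to the two lines $\R\times\{ia\}$ and $\R\times\{ja\}$, and then to observe that a $2\times 2$ matrix whose entries are Hilbert--Schmidt is itself Hilbert--Schmidt on $L_+^2\oplus L_-^2$. I would use the decomposition $R_\alpha(z)=R_\alpha^d(z)+R_\alpha^c(z)$ already prepared in Section~\ref{sec-resolvent} and treat the two pieces separately.

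For the discrete part, specialising~\eqref{eq-Rd} to $x_2=ia$, $y_2=ja$ produces a \emph{finite} sum over $j'\in\mathcal{N}$ of terms of the form $\frac{i}{2}\,\frac{e^{i\tau_{j'}(z)|x_1-y_1|}}{\tau_{j'}(z)}\,\phi_{j'}(ia)\overline{\phi_{j'}(ja)}$. Since $z\in\C\setminus[\xi_0,\infty)\subseteq\C\setminus[\xi_{j'},\infty)$, we have $\Im\tau_{j'}(z)>0$, so each such kernel is pointwise bounded by $C_1\,e^{-\delta|x_1-y_1|}$ for some $\delta>0$. The resulting Hilbert--Schmidt integral is therefore controlled by $\int_\R\!\int_\R |V_i(x_1)|\,e^{-2\delta|x_1-y_1|}|V_j(y_1)|\,\D x_1\,\D y_1$, which is finite because hypothesis~\eqref{eq-assumptionV} places $V_\pm$ in $L^1(\R)\cap L^\infty(\R)$.

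For the continuous part, the kernel reads $\frac{i}{2}\int_\R \frac{e^{i\tau(p,z)|x_1-y_1|}}{\tau(p,z)}\,\psi(p,ia)\overline{\psi(p,ja)}\,\D p$, and the generalised eigenfunctions of $-\Delta_\alpha^\R$ recalled in~\cite[Chap.~II.2.4, App.~E]{AGHH} are uniformly bounded in both arguments. It therefore suffices to estimate $\int_\R |\tau(p,z)|^{-1}\,e^{-\Im\tau(p,z)|x_1-y_1|}\,\D p$: writing $z=-k^2$ in the prototype case of $z$ on the negative real axis, the substitution $p=k\sinh t$ identifies this integral with $2K_0(k|x_1-y_1|)$, where $K_0$ is the Macdonald function, and for a general $z\in\C\setminus[\xi_0,\infty)$ the same manipulation gives a bound of the same type with $k$ replaced by a complex parameter of positive real part. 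Because $K_0$ has only a logarithmic singularity at the origin and exponential decay at infinity, the function $|G_{ij}^c(z;x_1,y_1)|^2$ is locally integrable (the $\log^2$ singularity on the diagonal, relevant only when $i=j$, being harmless) with exponential decay. Sandwiching with $|V_i|^{1/2}$ and $|V_j|^{1/2}$ and using once more the exponential decay of $V_\pm$ yields a finite $L^2$-kernel.

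The main obstacle is this continuous-part kernel: its momentum representation is not pointwise bounded, and in the diagonal block $i=j$ one inherits the two-dimensional logarithmic singularity of the Green's function along $\{x_1=y_1\}$. Once the Macdonald-function bound is in place the remaining computations reduce to Fubini's theorem and routine $L^1$--$L^\infty$ estimates, and summing over the four index pairs shows that the full matrix $B(z)$ is Hilbert--Schmidt on $L_+^2\oplus L_-^2$, as asserted.
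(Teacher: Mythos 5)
Your argument is correct, but it follows a genuinely different route from the paper's. You work with the spectral decomposition $R_\alpha(z)=R_\alpha^d(z)+R_\alpha^c(z)$ from~\eqref{eq-Rd}--\eqref{eq-Rc} and estimate each piece restricted to the two lines directly, whereas the paper instead substitutes the Krein-type identity~\eqref{eq-resolupert-ext} into $|V_i|^{1/2}\mathrm{R}_{\alpha,ij}(z)V_j^{1/2}$ to reduce everything to the \emph{free} two-dimensional resolvent $R_0(z)$, whose kernel is the explicit Macdonald function $\frac{1}{2\pi}K_0(\sqrt{z}|\cdot|)$; the factor-blocks $A_1,A_2,A_3$ are shown Hilbert--Schmidt via Young's inequality using $V_\pm\in L^1\cap L^\infty$, and boundedness of $\Gamma_{0,\alpha}(z)^{-1}$ finishes the job. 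Your route has the appeal of matching the decomposition the paper actually relies on later (in the proof of Lemma~\ref{le-acspectrum}) for the second-sheet continuation, and your $p=k\sinh t$ substitution cleanly recovers the same $K_0$-type bound from the continuous part. The price is the unstated technical step that the generalized eigenfunctions $\psi(p,\cdot)$ of $-\Delta_\alpha^\R$ are uniformly bounded in $(p,x_2)$, and the extension of the sinh-substitution from $z<\xi_0$ to complex $z$ with $\Im\tau(p,z)$ bounded away from zero; both are true and checkable from~\cite[Chap.~II.2.4, App.~E]{AGHH}, but the paper's reduction to $R_0(z)$ sidesteps the eigenfunction bound entirely and is therefore shorter. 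Both proofs correctly use only the consequence $V_\pm\in L^1(\R)\cap L^\infty(\R)$ of hypothesis~\eqref{eq-assumptionV}, consistent with the remark following the lemma.
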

\begin{proof}
 Using (\ref{eq-resolupert-ext}) we have
\begin{multline*}
|V_i|^{1/2} \mathrm{R}_{\alpha, ij}(z) V_j^{1/2}
\\
  =
\underbrace{|V_i|^{1/2} \mathrm{R}_{0 ,ij} (z) V_j^{1/2}}_{A_1} -
\sum_{k,l\in \{+,-\}}
  \underbrace{|V_i|^{1/2}\mathrm{R}_{0,i k} (z)}_{A_2}
\Gamma _{0,\alpha }(z)_{kl}^{-1} \underbrace{\mathrm{R}_{0 ,lj} (z
)V_j^{1/2}}_{A_3}
  \,.
\end{multline*}
It is well known that $R_0 (z)$ is an integral operator
$$
R_0 (z)f (x)= \int_{\R^2}G_0 (z; x -w)f(w)\mathrm{d}w\,,\quad G_0
(z ; x )=\frac{1}{2\pi}K_0 (\sqrt{z}|x|)\,,
$$
where $K_0 (\cdot)$ stands for the Macdonald function
(\cf~\cite[Sec.9.6]{AS}), $\Im \sqrt{z} >0$. Consequently,
$\mathrm{R}_{0, ij }(z)$ is an integral operator with the kernel
$G_{0,ij}(z; \cdot-\cdot )$ defined as the ``bilateral'' embedding
of  $G_0 (z; \cdot -\cdot )$ acting from $L^2_j $ to $L^2_i$.
Using the properties of $K_0$ (see \cite[Eq.~(9.6.8)]{AS}), we
conclude that $G_0 (z;\cdot )$ has a logarithmic singularity at
the origin and apart from~$0$ it is continuous; moreover it
exponentially decays at the infinity. This implies that
$G_{0,ij}(z; \cdot)\in L^2 (\R)$ and consequently $|G_{0,ij}(z;
\cdot)|^2 \in L^1 (\R)$. Since $V_\pm \in L^\infty (\R)\cap L^1
(\R)$ we have
\begin{align*}
\| |V_i |^{1/2} \mathrm{R} _{0, ij} (z) V_j ^{1/2}\|^2
_\mathrm{HS} & \leq \int_{\R^2} |V_i (x)| |G_{0,ij}(z; x-y)|^2
|V_j (y) | \, \mathrm{d}x
 \mathrm{d}y\,, \\
 &
 \leq
\|V_j\|_{\infty}\| |V_i|^{1/2}\mathrm{R}_{0,
ij}(z)\|_\mathrm{HS}^2 \\ & \leq \|V_j\|_{\infty} \|V_{i}\|_{L^1
(\R)} \||G_{0,ij}(z)|^2\|_{L^1 (\R)}\,,
\end{align*}
where $\|\cdot\|_\mathrm{HS}$ denotes the Hilbert-Schmidt norm and
the last step employs the Young inequality (\cf~\cite[Chap.~IX.4,
Ex.~1]{RS2}). The above inequalities show that the operators
called $A_1\,, A_2\,,A_3$  are the Hilbert-Schmidt operators; note
$\| \mathrm{R}_{0, ij}(z)V_j^{1/2}\|_\mathrm{HS}
=\||V_j|^{1/2}\mathrm{R}_{0, ji}(z)\|_\mathrm{HS}$. Moreover,
since $\Gamma _{0,\alpha }(z)_{kl}^{-1}$ are bounded, $|V_i|^{1/2}
\mathrm{R}_{\alpha, ij}(z) V_j^{1/2}$ are Hilbert-Schmidt as well.
\end{proof}
\begin{remark} \rm{Note that to prove the above lemma we  use only $V_\pm \in
L^\infty (\R)\cap L^1 (\R)$; the stronger assumption
(\ref{eq-assumptionV}) will be used in the following.}
\end{remark}

Suppose $\mathcal{B}$ is an open set from  $[\xi_0 , \infty )$ and
$E_{\mathcal{B}}$ denotes the spectral measure of $H_{\alpha , V_+
, V_-}$.
Denote $\mathcal{H}_{ac}= \{\psi \in L^2\,:\, \mathcal{B}\mapsto
(\psi, E_{\mathcal{B}} \psi)\,\,
\mathrm{is}\,\,\mathrm{absolutely} \,\,\mathrm{continuous}\}$.
\begin{lemma} \label{le-acspectrum}
Suppose~\eqref{eq-assumptionV}.
\begin{enumerate}
\item
For any interval $[a,b ]\subset [\xi_0 , \mu_0 ] $ which is
disjoint from a finite set
of numbers~$\mathcal{E}$ we have  $ \mathrm{Ran }\,
E_{(a,b)}\subset \mathcal{H}_{ac}$.

\item There exists a region $\Omega_- \subset \C_-$ with a
boundary containing $(\xi_0\,,\mu_0) $ and operator-valued
function $R^{\mathit{II}}_{\alpha, V_+, V_-}(\cdot )$  analytic in
$ \Omega\setminus \mathcal{E}$, where $ \Omega = \C_+ \cup (\xi_0
, \mu_0)\cup \Omega_- $ which constitutes the analytic
continuation of $R_{\alpha, V_+, V_-}(\cdot )$.
\end{enumerate}
\end{lemma}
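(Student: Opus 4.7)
The plan is to combine the Krein-like resolvent formula~(\ref{eq-resolvent}) with the analytic Fredholm theorem applied to $z\mapsto 1+B(z)$. First I would construct an analytic continuation of the sandwiched unperturbed resolvent across the interval $(\xi_0,\mu_0)$ into a region $\Omega_-\subset\C_-$ whose boundary contains $(\xi_0,\mu_0)$; then the Krein formula will transport this continuation to $R_{\alpha,V_+,V_-}(z)$, with the only non-analyticities being zeros of $1+B(z)$; finally part~(1) will follow from~(2) via Stone's formula.

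The key step is continuing the operator-valued matrix $B(z)$. Using the decomposition $R_\alpha=\sum_{j\in\mathcal{N}}R_\alpha^{j,d}+R_\alpha^c$ of Section~\ref{sec-resolvent}, I observe that $R_\alpha^c(z)$ is holomorphic on $\C\setminus[0,\infty)$ and that, when $\alpha a>1$, $R_\alpha^{1,d}(z)$ is holomorphic on $\C\setminus[\xi_1,\infty)$; in both cases these pieces are already analytic in a complex neighbourhood of $(\xi_0,\mu_0)$ and continue unambiguously into a strip of $\C_-$. The only genuine branch comes from $R_\alpha^{0,d}(z)$ via $\tau_0(z)=\sqrt{z-\xi_0}$, whose kernel extends meromorphically onto the second Riemann sheet of this square root. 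The assumption~(\ref{eq-assumptionV}) gives $|V_\pm(x)|\le M\,\e^{-C|x|}$ pointwise; sandwiching the continued kernel $\tfrac{i}{2}\tau_0(z)^{-1}\e^{i\tau_0(z)|x_1-y_1|}\phi_0(x_2)\overline{\phi_0(y_2)}$ between $|V_i|^{1/2}$ and $V_j^{1/2}$ and using $|\e^{i\tau_0(z)|x_1-y_1|}|=\e^{-(\Im\tau_0)|x_1-y_1|}$, the Hilbert--Schmidt norm estimate from the proof of Lemma~\ref{le-compactness} survives as long as $|\Im\tau_0(z)|<C/2$. This defines the region $\Omega_-$ and yields an analytic Hilbert--Schmidt-valued continuation of each entry of $B(z)$ to $\Omega:=\C_+\cup(\xi_0,\mu_0)\cup\Omega_-$.

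Since $\|B(z)\|_\mathrm{HS}\to 0$ as $\Im z\to+\infty$, $1+B(z)$ is invertible at some point of $\Omega$, so the analytic Fredholm theorem produces a discrete subset $\mathcal{E}^\ast\subset\Omega$ outside of which $[1+B(z)]^{-1}$ exists and is analytic, the whole inverse being meromorphic on $\Omega$. Inserting this into~(\ref{eq-resolvent}) gives the desired second-sheet continuation $R^{\mathit{II}}_{\alpha,V_+,V_-}(z)$, analytic on $\Omega\setminus\mathcal{E}^\ast$ and coinciding with $R_{\alpha,V_+,V_-}(z)$ on $\C_+$, which is the content of~(2); setting $\mathcal{E}:=\mathcal{E}^\ast\cap(\xi_0,\mu_0)$ and noting that $\mathcal{E}\cap[a,b]$ is automatically finite for any closed $[a,b]\subset(\xi_0,\mu_0)$ by discreteness and compactness delivers the finite exceptional set.

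For part~(1) I would invoke Stone's formula: by~(2) the function $z\mapsto(f,R_{\alpha,V_+,V_-}(z)g)$ extends continuously from $\C_+$ across $(a,b)$ for all $f,g\in C_0(\R^2)$, so the density $\pi^{-1}\Im(f,R_{\alpha,V_+,V_-}(\lambda+i0)f)$ of the spectral measure $\lambda\mapsto(f,E_\lambda f)$ is continuous, hence absolutely continuous, on $(a,b)$; density of $C_0(\R^2)$ in $L^2$ together with boundedness of $E_{(a,b)}$ then yields $\mathrm{Ran}\,E_{(a,b)}\subset\mathcal{H}_{ac}$. The main obstacle will be the careful construction of $\Omega_-$: the exponential decay rate $C$ in~(\ref{eq-assumptionV}) restricts how deep into $\C_-$ the continuation can reach, and one must bookkeep the exponential bounds uniformly so that the Hilbert--Schmidt property is preserved and $\Omega$ is connected, a prerequisite for applying the analytic Fredholm theorem on a domain joining $\C_+$ (where invertibility of $1+B(z)$ is easy) to the portion of $\C_-$ where one wishes to locate the resonance poles.
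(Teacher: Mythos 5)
Your proposal mirrors the paper's proof: the decomposition $R_\alpha = R^c_\alpha + R^d_\alpha$, the observation that only $R^{0,d}_\alpha$ branches via $\tau_0$ while $R^c_\alpha$ and $R^{1,d}_\alpha$ are already analytic across $(\xi_0,\mu_0)$, the Hilbert--Schmidt estimate in the strip $\Im\tau_0^{\mathit{II}}>-C/2$ using the exponential decay from~\eqref{eq-assumptionV}, the analytic Fredholm theorem on $\Omega=\C_+\cup(\xi_0,\mu_0)\cup\Omega_-$, and the transport through the Krein formula are exactly the paper's steps. The only elisions are that the paper also explicitly continues the sandwiched factors $\check{R}_{\alpha,i}(z)V_i^{1/2}$ and $|V_j|^{1/2}\hat{R}_{\alpha,j}(z)$ appearing in~\eqref{eq-resolvent} by the same Hilbert--Schmidt estimate, obtains compactness of the $R^c_\alpha$-piece at the boundary by subtraction from the known Hilbert--Schmidt operators rather than from analyticity alone, and for part~(1) invokes \cite[Thm.~XIII.19]{RS4} after establishing uniform boundedness of $|f|R_{\alpha,V_+,V_-}(\lambda+i\varepsilon)|f|$ near the real axis rather than Stone's formula directly --- all minor technical points in the same spirit as yours.
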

\begin{proof}
%
Operator $R_{\alpha}(\cdot )$ is analytic in $\C\setminus [\xi_0,
\infty)$. The Stone formula implies that the limit
$\mathrm{s}\mbox{--}\!\lim_{\varepsilon\to 0}\left( R_\alpha
(\lambda +i\varepsilon)- R_\alpha (\lambda -i\varepsilon) \right)
\neq 0$ for $\lambda \in (\xi_0, \mu_0)$. In the following we will
use notations $R_\alpha (\lambda \pm i0)$ for the limits
$\mathrm{s}\mbox{--}\!\lim_{\varepsilon \to 0}R_\alpha (\lambda
\pm i\varepsilon)$ and analogously for the resolvents of the
remaining operators. Our first aim is to show that $R_\alpha
(\cdot )$ can be analytically continued from $\C_+$ through
$(\xi_0, \mu_0 )$ to the lower half-plane in the sense described
at the beginning of this section. Recall
\begin{equation}\label{eq-resoldecomposition}
  R_\alpha (z)= R^c _\alpha (z)+R^d (z)\,,\quad R^d (z)=
  \sum _{j\in \mathcal{N}}R_\alpha ^{j,d} (z)\,.
\end{equation}
Note, that $R^c _\alpha (z)$ is analytic for $z \in \C\setminus
[0, \infty )$, \cf~(\ref{eq-Rc}). Furthermore, if $\alpha a >1$
then there exists the component $R^{1,d }_\alpha (z)$ of $R^{d
}_\alpha (z)$ which is analytic for $z\in \C\setminus [\xi_1 ,
\infty)$. On the other hand, the analytic continuation of $R^{0,d}
 _\alpha (\cdot )$ through $(\xi_0, \infty)$ is defined by
the second sheet values of $\tau_0 $, \ie\ $\Im \tau_0
^\mathit{II}\leq 0$. Precisely for $f\,,g\in C_0 (\R^2)$ the
operator  $fR^{0,d}
 _\alpha (\cdot )g$ has a second sheet analytical continuation
which is an integral operator with the kernel $f(x)G^{0,d,
\mathit{II}}
 _\alpha (z; x-y )g(y)$ and $G^{0,d,
\mathit{II}}
 _\alpha$ is defined by (\ref{eq-Rd}) after substituting
 $\tau_0 $ by  $ \tau_0
^\mathit{II}$. The resulting operator we denote as $R^{0,d,
\mathit{II}}
 _\alpha (\cdot )$.
  Consequently, we define the
second sheet continuation of $R
 _\alpha (z)$  as
$R^{\mathit{II}}_\alpha (z)= R_\alpha ^c (z)+R^{ d, \mathit{II}}
 _\alpha (z)$ where $R^{d,
\mathit{II}}
 _\alpha (z)=R^{0, d, \mathit{II}}
 _\alpha (z)+R^{1, d}
 _\alpha (z)$ and $z\in \C_+ \cup (\xi_0 , \mu_0  )
 \cup \C_- $. Operator $R^{\mathit{II}}_\alpha (z)$ is analytic bounded in $L^2$
 in the sense described at the beginning of section.
\\
By means of $G^{0,d, \mathit{II}}
 _\alpha (z; x-y )$ we define the operator $|V_i|^{1/2} \mathrm{R}_{ \alpha ,ij}^{0,d, \mathit{II}}
(z)V_j^{1/2}\,:\, L_j^2 \to L_i^2$, $i\,,j \in \{+\,,-\}$ with the
HS-norm
\begin{multline*}
\big\||V_i|^{1/2} \mathrm{R}_{ \alpha ,ij}^{0,d, \mathit{II}}
(z)V_j^{1/2}\big\|^2_\mathrm{HS}
\\
=\frac{|\phi_0 (i a)\phi_0 (j a)|^2}{4 |\tau _0^{\mathit{II}}
(z)|^2 }\int_{\R^2} \e^{-C(|x_1|+|y_1|)} \e ^{-2\Im
\tau_0^{\mathit{II}} (z)|x_1-y_1|} \,|h_i(x_1)
h_j(y_1)|\mathrm{d}x_1 \mathrm{d}y_1\,,
\end{multline*}
where $h_\pm := V_\pm \e^{C|\cdot|}\in L^\infty (\R)$,
\cf~(\ref{eq-assumptionV}). The above expression is finite if $\Im
\tau_0^{\mathit{II}} (z)>- C/2$. For $|V_i|^{1/2} \mathrm{R}_{
\alpha ,ij}^{1,d} (z)V_j^{1/2}$ the analogous expression is always
finite because $R_\alpha ^{1,d} (z)$ is analytic on $\C_+\cup
(\xi_0\,,\mu_0 ) \cup \C_- $ and $\Im \tau_1 (z)
>0$.  Furthermore, since $|V_i|^{1/2} \mathrm{R}_{\alpha,ij}
(z)V_j^{1/2}$, $z\in\C\setminus [\xi_0, \infty)$ is a
Hilbert-Schmidt operator (\cf~Lemma~\ref{le-compactness}) as well
as  $|V_i|^{1/2} \mathrm{R}^{d}_{\alpha, ij} (z)V_j^{1/2}$, we
conclude that $|V_i|^{1/2} \mathrm{R}^{c}_{\alpha,ij}
(z)V_j^{1/2}$ is also the Hilbert-Schmidt operator with the
 boundary values $|V_i|^{1/2} \mathrm{R}^{c}_{\alpha, ij}
(\lambda \pm i 0 )V_j^{1/2}$, $\lambda \in (\xi_0 \,,\mu_0)$ being
compact.
Con\-se\-quent\-ly, $B_{ij} (\lambda +i0)$, $\lambda \in (\xi_0,
\mu_0)$ is compact and $B_{ij}(\cdot)$ has the second sheet
continuation $B_{ij}^{\mathit{II}}(\cdot )=|V_i|^{1/2}
\mathrm{R}_{\alpha , ij}^{\mathit{II}} (\cdot)V_j^{1/2}$ through
$(\xi_0, \mu_0)$. Finally, we conclude that $B^{\mathit{II}}(z)$
is compact $z\in\Omega =\C_+\cup (\xi_0, \mu_0)\cup \Omega_-$,
where $\Omega_-$ is a region in $\C_-$ with boundary containing
$(\xi_0, \mu_0)$ and confined by the condition $\Im \tau_0 (z)>-
C/2$.
\\
Note that the  operator $f \check{R}_{\alpha ,i }(z)V_i ^{1/2}$,
where $f\in C_0 (\R^2)$, has the analytic second sheet
continuation. Indeed, as the above discussion shows the only
nontrivial component is given by $f \check{R}^{0,d}_{\alpha ,i
}(z)V_i ^{1/2}$. Since its Hilbert-Schmidt norm takes the form
\begin{multline*}
\|f \check{R}^{0,d, \mathit{II} }_{\alpha ,i }(z) V_i
^{1/2}\|^2_\mathrm{HS}=
\\
=\frac{|\phi_0 (ia)|^2 }{4| \tau_0^{\mathit{II}} (z)|^2}
\int_{\R^2} \int_{\R} \mathrm{e}^{-C|y_1|} \mathrm{e} ^{-2 \Im
\tau_0 ^{\mathit{II} }(z)|x_1-y_1 |}
 | f(x)|^2 |h_i (y_1 )|
\mathrm{d}x \mathrm{d}y_1\,,
\end{multline*}
where $x=(x_1, x_2)$ and $h_i:= V_i \e^{C|\cdot|}\in L^\infty
(\R)$ we can conclude that $f \check{R}^{0,d, \mathit{II}}_{\alpha
,i }(z) V_i ^{1/2}$ is analytic for $z\in \Omega $ and bounded as
the operator acting from $L^2_i$ to $L^2$ . The analogous
statement can be obtained for $|V_i|^{1/2}\hat{R}_{\alpha , i
}(z)f$.
\medskip \\
\emph{Ad~1.} Since the condition $\mathrm{Ker}\,[1+B(z)] \neq
\{0\}$ determines poles of $R_{\alpha, V_+, V_-}(\cdot )$,
\cf~(\ref{eq-spectresol}), which is the resolvent of a
self-adjoint operator, the former has no solution for $z \in
\C_+$. Combining compactness of $B(z)$ and the analytic Fredholm
theorem (see, \eg, \cite[Thm. VI.14]{RS1}) together with the fact
that $z\mapsto B(z)$ is analytic for $z\in \C_+ \cup (\xi_0 ,
\mu_0)$, we conclude that the operator $[1+B(z)]^{-1}$ exists and
it is bounded analytic in $z \in \C_+$ with the boundary values
$z=\lambda +i 0$, provided $\lambda \in (\xi_0 , \mu_0)$ avoids a
finite set $\mathcal{E}$ of real numbers; for an analogous
discussion see~\cite[Thm.~XIII.21]{RS4}.
Moreover, the operator $|f|
 R_{\alpha 
}(\lambda +i \varepsilon)|f|$, $f\in C_0 (\R^2 )$ is uniformly
bounded for $0< \varepsilon < 1$ and $\lambda \in (\xi_0, \mu_0)$.
The operators $|f| \check{R}_{\alpha ,i }(z)V_i ^{1/2}$ and
$|V_i|^{1/2}\hat{R}_{\alpha , i }(z)|f|$ are uniformly bounded as
well. Combining the above statements with the resolvent
formula~(\ref{eq-resolvent}), we come to the conclusion that, for
any $f\in C_0 (\R^2)$, the function
\begin{multline*}
 |f|
 R_{\alpha , V_+, V_-
}(z)|f|
 =  |f| R_{\alpha}(z)|f|
\\
- \sum_{i , j \in \{+, -\}}(|f| \check{R}_{\alpha ,i }(z)V_i
^{1/2})
  \left[ 1+B (z)\right]_{ij}^{-1}
(|V_j |^{1/2} \hat{R}_{\alpha , j }(z)|f|  )\,,
\end{multline*}
is uniformly bounded with respect to $z=\lambda  +i \varepsilon $,
$0<\varepsilon <1$, $\lambda \in [a,b]$. Furthermore, since
$$
|(g,R_{\alpha , V_+, V_- }(z) g)| \leq \||g|^{1/2}\|^2\,
\|\,|g|^{1/2}R_{\alpha , V_+, V_- }(z)|g|^{1/2}\| \,,
$$
for any $g\in C_0 (\R^2)$ the assumption of
\cite[Thm.~XIII.19]{RS4} is fulfilled and it yields the claim.
(Note that in the last expression the same notion $\|\cdot \|$ was
used as the operator norm as well as the norm of function).
\medskip \\
\emph{Ad~2.} Using again the Fredholm theorem and the compactness
of $B^{\mathit{II}}(\cdot)$, we state that the operator
$[1+B^{\mathit{II}}(z)]^{-1}$ exists and it is bounded analytic in
$\Omega \setminus \mathcal{E}$.
  For $z\in \Omega \setminus
\mathcal{E}$ we construct $R^{\mathit{II}}_{\alpha , V_+, V_-
}(z)$ as
$$
R^{\mathit{II}}_{\alpha , V_+, V_- }(z) =
R^{\mathit{II}}_{\alpha}(z) - \check{R}^{\mathit{II}}_{\alpha }(z)
  \mathrm{V}^{1/2}\left[ 1+B^{\mathit{II}} (z)\right]^{-1}
|\mathrm{V} |^{1/2} \hat{R}^{\mathit{II}}_{\alpha }(z)\,,
$$
\cf~(\ref{eq-resolI}), where $\check{R}^{\mathit{II}}_{\alpha
}(z)\mathrm{V}^{1/2}$ and $\hat{R}^{\mathit{II}}_{\alpha
}(z)|\mathrm{V} |^{1/2}$ are the second sheet continuations
already discussed.
\end{proof}
\begin{corollary} \label{cor-absing}
$\sigma_\mathrm{sc}(H_{\alpha , V_+, V_- })\cap [\xi_0, \mu_0
]=\emptyset$.
\end{corollary}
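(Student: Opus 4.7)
The plan is to deduce the corollary as a direct consequence of part~1 of Lemma~\ref{le-acspectrum}. First I would observe that this lemma tells us that for any interval $[a,b]\subset[\xi_0,\mu_0]$ disjoint from the finite exceptional set $\mathcal{E}$, the range of the spectral projector $E_{(a,b)}$ of $H_{\alpha,V_+,V_-}$ lies entirely in $\mathcal{H}_{ac}$. Translated in terms of the spectral measure, this means that for every $\psi\in L^2$, the measure $\mathcal{B}\mapsto (\psi,E_{\mathcal{B}}\psi)$ restricted to $[\xi_0,\mu_0]\setminus\mathcal{E}$ is absolutely continuous with respect to Lebesgue measure.

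Next I would use the standard decomposition of the spectral measure into its absolutely continuous, singularly continuous and pure point parts, and recall that the singular continuous component is by definition \emph{continuous} (it charges no single point) and \emph{singular} (it is supported on a Lebesgue-null set). From the previous paragraph the singular continuous part of the spectral measure can only be supported on the set $\mathcal{E}\cup\{\xi_0,\mu_0\}$, which is finite.

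Since any continuous Borel measure on $\R$ assigns measure zero to every finite set, the singular continuous part of the spectral measure restricted to $[\xi_0,\mu_0]$ must vanish identically. Consequently no $\lambda\in[\xi_0,\mu_0]$ belongs to the singular continuous spectrum, which gives the claim. I do not anticipate any serious obstacle here; the only point of care is to separate the roles of the exceptional set $\mathcal{E}$ (which might carry discrete eigenvalues, but no singular continuous mass) and the a.c.\ region where Lemma~\ref{le-acspectrum}(1) applies.
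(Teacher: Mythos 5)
Your proposal is correct and follows essentially the same route as the paper: both invoke Lemma~\ref{le-acspectrum}(1) to conclude that any singular continuous mass in $[\xi_0,\mu_0]$ would have to be concentrated on the finite set $\mathcal{E}$ (plus possibly the endpoints), and both conclude by noting that a continuous measure supported on a finite set must vanish. Your write-up is a slightly more explicit version of the paper's one-line argument.
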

\begin{proof}
It follows from the previous theorem that
$\sigma_\mathrm{sc}(H_{\alpha , V_+, V_- })\cap [\xi_0,\mu_0
]\subset \mathcal{E} $ is a finite set; this implies the claim.
\end{proof}
Without a danger of confusion, we employ the notation $\lambda
\mapsto E(\lambda )$ for the spectral resolution of $H_{\alpha
,V_0} $ for $\lambda \in [\xi_0 , \infty )\setminus \mathcal{E}$.
%
Then the operator $R_{\alpha ,V_0} (z)$ admits the following
decomposition
\begin{equation}\label{eq-decomR}
R_{\alpha ,V_0} (z) =\sum_{i=1}^N \frac{1}{\nu _i -z}P_{i} +\int
_{[\xi_0 , \infty)
} \frac{\mathrm{d}E (\lambda )}{\lambda -z }\,,
\end{equation}
where $
  \{\nu_i\}_{i=1}^N = \sigma _{\mathrm{p} }(H_{\alpha ,V_0} )
$,
$P_i =\omega_i\,(\omega_i , \cdot)$, and $\omega_i $ are the
corresponding eigenvectors.
Due to the definition of $E(\cdot )$ and statement of
Corollary~\ref{cor-absing}, we conclude that $ (\xi_0 , \mu_0)\ni
\lambda \mapsto E(\lambda )$ project onto $\mathcal{H}_{ac}$.
Given $f, g \in L^2 $, let us denote by $F(\cdot )_{f,g}$ the
Radon--Nikodym derivative of $ (\xi_0 , \mu_0)\ni \lambda \mapsto
(f,E(\lambda )g)$. The limit $z=\gamma  +i\varepsilon $ for
$\varepsilon \to 0 $ with $\gamma \in (\xi _0 , \mu_0)\setminus
\mathcal{E}$ and $f\,,g \in C_0 (\R^2)$ takes the form
\begin{align} \nonumber
 \big(f,R_{\alpha, V_0}(\gamma +i \,0 )g \big) = &
\sum _{i=1}^{N} \frac{1}{\nu _i -\gamma }( f ,\omega _i)(\omega_i
, g ) +\mathcal{P}\!\!\int _{[\xi_0, \mu_0) } \frac{
F_{f,g}(\lambda )\,\mathrm{d}\lambda }{\lambda -\gamma }
\\
\label{eq-uplimit} & + \int _{\mu_0 }^\infty
\frac{\mathrm{d}(f,E(\lambda ) g) }{\lambda -\gamma }
 + \pi \, i F_{f,g}(\gamma ) \,,
\end{align}
where the symbol $\mathcal{P}$ denotes the principle value. By the
edge-of-the-wedge theorem (\cf~\cite{Ru}), we get
\begin{equation} \label{eq-edge}
\big(f, R_{\alpha, V_0} (\gamma +i \,0 )g\big) = \big(f,R_{\alpha,
V_0}^\mathit{II}(\gamma-i\, 0 )g \big) \,, \quad f\,,g\in C_0
(\R^2) \,,
\end{equation}
where $R_{\alpha, V_0}^\mathit{II} (\cdot )$ denotes the second
sheet continuation of $R_{\alpha, V_0} (\cdot )$ stated in
Lemma~\ref{le-acspectrum}.
%
%
%
%
The operator $R_{\alpha, V_0} (\cdot )$ stands for resolvent of
the mirror symmetry system. Now we introduce the potential
$V_\epsilon =\epsilon V_p$ living on $\R\times \{+a\}$. By means
of $R^\mathit{II}_{\alpha,V_0}(z)$  we determine
$\Gamma^\mathit{II}(z)$ given by
\begin{equation}\label{eq-GammaII}
\Gamma ^\mathit{II} (z)=1
+|V_\epsilon|^{1/2}\mathrm{R}^\mathit{II}_{\alpha,
V_0}(z)V_\epsilon^{1/2}\,,\quad z\in \Omega\setminus
\mathcal{E}\,,
\end{equation}
where the second component of the above expression states the
analytic second sheet continuation of
$|V_\epsilon|^{1/2}\mathrm{R}_{\alpha,
V_0}(z)V_\epsilon^{1/2}\,:\,L^2_+ \to L^2_+$.

\subsection{Zeros of $\Gamma^\mathit{II}(\cdot)$; the Fermi golden rule}
Henceforth  we assume that the set of embedded eigenvalues of $
H_{\alpha ,V_0}$ is not empty (this is true, for instance, under
the hypotheses of Corollary~\ref{th-emb}). Then there exists an
integer $k_0 \leq N$ such that for all $i \geq k_0 $ we have
$\nu_i \in (\xi_0 ,\mu_0) $. Given $k\geq k_0$, define
\begin{equation}\label{eq-A}
A_k(z) := R_{\alpha , V_0}(z)-\frac{1}{\nu _k -z}P_{k}
\end{equation}
Analogously we define $A_k ^\mathit{II}(z)$ substituting
$R_{\alpha , V_0}(z)$ in (\ref{eq-A}) by $R^{\mathit{II}}_{\alpha
, V_0}(z)$.

The main  results of this section is contained in the following
theorem.
%
\begin{theorem}
Suppose~\eqref{eq-assumptionV}. Assume that
 the number $\nu_k \in (\xi_0 , \mu_0)$ is an embedded
eigenvalue of $H_{\alpha, V_0}$. Then the resolvent of $H_{\alpha
; \epsilon }$ has a pole~$z_k$ satisfying
$$
z_k=\nu_k +\epsilon \, ( \omega _k , V_p\omega _k )_{L^2_+} +
\epsilon^2 \;\! (\Gamma_r +i\Gamma_i) +\mathcal{O}(\epsilon^3 )
  \qquad \mbox{as} \qquad
  \epsilon \to 0
  \,,
$$
where
\begin{align*}
  \Gamma_r &:= -\sum _{i \in \{1,..., N\}\setminus k}
 \frac{1}{\nu _i -\nu_k}|( \omega_k
, V_p \omega _i)_{L^2_+}|^2 -\mathcal{P}\!\!\int _{[\xi_0, \infty
) } \frac{  \mathrm{d}\, \big(  \omega _k ,V_p E(\lambda )V_p
\omega_k\big)_{L^{2}_+} }{\lambda -\nu_k }
  \,,
  \\
  \Gamma_i &:= - \pi F_+(\nu_k )
  \,, \qquad \mbox{with} \qquad
  F_+(\nu _k):= \frac{\mathrm{d}}{\mathrm{d} \nu}
  \big( \omega _k ,V_p  E(\nu )V_p \omega_k\big)_{L^{2}_+} |_{\nu =\nu_k}
  \,.
\end{align*}
(Here the functions $\omega _k $ are understood as embeddings to
$L^2_+$. Similarly, $E(\cdot )$ is a family of operators acting
from $L^2_+$ to $L^2_+$.)
\end{theorem}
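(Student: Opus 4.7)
The plan is to locate each resonance as a zero of $\Gamma^\mathit{II}(z)=1+\epsilon |V_p|^{1/2}R^\mathit{II}_{\alpha,V_0}(z)V_p^{1/2}$ in a complex neighbourhood of the embedded eigenvalue $\nu_k$ and then read off its asymptotics in $\epsilon$. The starting point is the decomposition coming from (\ref{eq-A}),
$$
R^\mathit{II}_{\alpha,V_0}(z)=\frac{P_k}{\nu_k-z}+A_k^\mathit{II}(z),
$$
where, by Lemma~\ref{le-acspectrum}, $A_k^\mathit{II}$ is analytic in a complex neighbourhood of $\nu_k$ (provided $\nu_k\notin\mathcal{E}$), and $P_k=\omega_k(\omega_k,\cdot)$ is rank one.

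Introducing $u:=|V_p|^{1/2}\omega_k$ and $v:=V_p^{1/2}\omega_k$ in $L^2_+$, I would rewrite
$$
\Gamma^\mathit{II}(z)=M_\epsilon(z)+\frac{\epsilon}{\nu_k-z}\,u\,(v,\cdot)_{L^2_+},\qquad M_\epsilon(z):=1+\epsilon|V_p|^{1/2}A_k^\mathit{II}(z)V_p^{1/2}.
$$
Since $M_\epsilon(z)$ is analytic in $z$ near $\nu_k$ and, by a Neumann-series argument based on the Hilbert--Schmidt bounds of Lemma~\ref{le-compactness}, boundedly invertible for small $|\epsilon|$, the Weinstein--Aronszajn (Schur complement) identity reduces $\mathrm{Ker}\,\Gamma^\mathit{II}(z)\neq\{0\}$ to the scalar equation
$$
\nu_k-z=-\epsilon\bigl(v,M_\epsilon(z)^{-1}u\bigr)_{L^2_+}.
$$
The right-hand side is jointly analytic in $(z,\epsilon)$ at $(\nu_k,0)$, so the implicit function theorem delivers a unique analytic branch $z_k(\epsilon)$ with $z_k(0)=\nu_k$.

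To extract the expansion I would expand $M_\epsilon(z)^{-1}$ as a geometric series, collect powers of $\epsilon$ and simplify via $V_p^{1/2}|V_p|^{1/2}=V_p$; substituting $z\mapsto\nu_k$ in the $\mathcal{O}(\epsilon^2)$ term (the error is absorbed into $\mathcal{O}(\epsilon^3)$) gives
$$
z_k=\nu_k+\epsilon(\omega_k,V_p\omega_k)_{L^2_+}-\epsilon^2(\omega_k,V_p A_k^\mathit{II}(\nu_k)V_p\omega_k)_{L^2_+}+\mathcal{O}(\epsilon^3).
$$
To identify the second-order coefficient I would invoke the edge-of-the-wedge identity (\ref{eq-edge}) to equate the second-sheet value $A_k^\mathit{II}(\nu_k)$, matrix-elemented against $V_p\omega_k$ (approximated by $C_0(\R^2)$-functions thanks to the exponential decay (\ref{eq-assumptionV})), with the first-sheet boundary value $A_k(\nu_k+i0)$. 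Feeding (\ref{eq-decomR}) into (\ref{eq-uplimit}) and discarding the removed $k$-th pole decomposes $(\omega_k,V_pA_k(\nu_k+i0)V_p\omega_k)_{L^2_+}$ into the discrete sum over $i\neq k$, a principal-value integral against $d(\omega_k,V_pE(\lambda)V_p\omega_k)_{L^2_+}$, and the Plemelj--Sokhotski contribution $i\pi F_+(\nu_k)$; reading off real and imaginary parts furnishes $\Gamma_r$ and $\Gamma_i$ exactly as stated.

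The two points I expect to be most delicate are: first, verifying that $M_\epsilon(z)$ really is invertible on the whole disk in which the implicit function theorem is applied (this needs the compactness of Lemma~\ref{le-compactness} together with the analytic Fredholm framework of Lemma~\ref{le-acspectrum}, and uses that $\nu_k\notin\mathcal{E}$); and second, showing that the unique zero of the scalar reduction is genuinely a \emph{simple} pole of $R^\mathit{II}_{\alpha;\epsilon}$, rather than an artefact of the reduction. The latter will rely on the simplicity of $\nu_k$ as an eigenvalue of $H_{\alpha,V_0}$, which is inherited from the one-dimensional Dirichlet construction of Theorem~\ref{Thm.mirror} together with the uniqueness of the bound states established in Lemma~\ref{le-disc2points}. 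Once these two technical points are secured, the Fermi golden rule conclusion $\Gamma_i=-\pi F_+(\nu_k)\leq 0$ (with strict inequality under a genericity assumption on the spectral density) follows from the non-negativity of the measure $\lambda\mapsto(\omega_k,V_pE(\lambda)V_p\omega_k)_{L^2_+}$.
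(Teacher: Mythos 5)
Your proposal is correct and follows essentially the same route as the paper: you reduce $\mathrm{Ker}\,\Gamma^\mathit{II}(z)\neq\{0\}$ to a scalar equation by isolating the rank-one pole $P_k/(\nu_k-z)$ (the paper writes this as the equation $\eta_k(\epsilon,z)=0$ with $\eta_k(\epsilon,z)=z-\nu_k-(\omega_k,V_\epsilon^{1/2}G_{\epsilon,k}(z)^{-1}|V_\epsilon|^{1/2}\omega_k)_{L^2_+}$, which, after pulling out the $\epsilon$ from $V_\epsilon=\epsilon V_p$, is your $\nu_k-z=-\epsilon(v,M_\epsilon(z)^{-1}u)_{L^2_+}$), apply the implicit function theorem at $(0,\nu_k)$, expand the inverse in powers of $\epsilon$, and identify the second-order coefficient through~\eqref{eq-A}, \eqref{eq-uplimit} and the edge-of-the-wedge relation~\eqref{eq-edge}. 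Your phrasing via the Weinstein--Aronszajn/Schur complement is only a naming of the same rank-one reduction the paper carries out explicitly, and the technical flags you raise (invertibility of $M_\epsilon$ on a neighbourhood of $\nu_k$, and the $C_0$-approximation needed to apply~\eqref{eq-edge} to $V_p\omega_k$) are handled implicitly in the paper via $\nu_k\notin\mathcal{E}$, the smallness of $\epsilon$, and the decay assumption~\eqref{eq-assumptionV}.
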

\begin{proof}
Note that the function $z\mapsto (f, A^\mathit{II}_k (z)g)$,
$f\,,g\in C_0 (\R^2 )$ is analytic in a neighbourhood $M$ of
$\nu_k$. Furthermore, for $\epsilon$ small enough, say $\epsilon
\leq \epsilon_0$, where $\epsilon_0 >0$ the operator $
  G_{\epsilon, k}  (z):=
  1+|V_\epsilon
  |^{1/2} A^\mathit{II}_{k} (z) V_\epsilon ^{1/2}
$ is invertible.
We define the function $\eta_k \,:\, [0, \epsilon_0 )\times M \to
\C$ by
$$
\eta_k (\epsilon , z) :=z-\nu_k -\big(\omega _k,
 V_{\epsilon
}^{1/2 } G_{\epsilon , k} (z)^{-1}|V_\epsilon |^{1/2} \omega _k
\big)_{L^2_+}\,.
$$
Suppose $z\in M\setminus \nu_k$. A straightforward calculation
using~(\ref{eq-GammaII}) and~(\ref{eq-A}) shows that
 $ \phi\in \mathrm{Ker}\,
\Gamma^{\mathit{II}}(z) $ if, and only if,
$$
\phi+\frac{1}{\nu_k -z} (\omega_k , V_\epsilon^{1/2}\phi )
G_{\epsilon ,k} (z)^{-1}|V_\epsilon|^{1/2}\omega_k =0\,.
$$
This means that $ \mathrm{Ker}\, \Gamma^{\mathit{II}}(z)\neq \{0\}
$ if, and only if, $z$~is a solution of
\begin{equation}\label{eq-expandz}
\eta_k(\epsilon , z) =0\,.
\end{equation}
After expanding $G_{\epsilon , k} (z)^{-1}$ with respect
to~$\epsilon$, function $\eta (\epsilon, z)$ reads as
\begin{equation}\label{eq-poleI}
\eta_k (\epsilon, z )= z-\nu_k -\epsilon \, (\omega_k , V_p
\omega_k )_{L^2_+} +\epsilon^2 \;\! \big( \omega_k,  V_p  A_k
^\mathit{II}(z) V_p \omega_k \big)_{L^2_+}+ \mathcal{O}(\epsilon^3
)\,.
\end{equation}
The function $\eta _k $ is analytic in $z\in M$ and it is $C^1$ in
both variables. It is clear that $\eta _k(0, \nu_k)=0$ and
$\partial_z \eta_k (0 , \nu_k) =1$. Using the implicit function
theorem to~(\ref{eq-expandz}) and applying~(\ref{eq-poleI}), we
state that there exists an open neighbourhood $\mathcal{U}_0\subset
\R_+$ of zero and the unique function $z_{k}\,:\, \mathcal{U}_0
\to \C $ of $\epsilon$ given by
$$
z_k =\nu_k +\epsilon \, (\omega_k , V_p \omega_k )_{L^2 _+}-
\epsilon^2 \;\! ( \omega_k ,  V_p A_k ^\mathit{II}( \nu_k -i \,0)
V_p \omega_k )_{L^2 _+}+ \mathcal{O}(\epsilon^3 )\,,
$$
being a zero of $\Gamma^\mathit{II}(\cdot)$.
Employing~(\ref{eq-A}), (\ref{eq-uplimit}) and~(\ref{eq-edge}), we
get the statement of the theorem.
\end{proof}
%

The above results can be summarized as follows. Suppose the mirror
symmetry system~\eqref{mirror} has an embedded eigenvalue $\nu$.
Once we break the symmetry introducing the ``perturbant"
$V_{\epsilon}= \epsilon V_p$, the pole of the resolvent shifts
from the spectrum and makes the second sheet continuation pole of
the resolvent. The imaginary component of pole is related to the
resonance width $\Gamma_w := -2 \Im \,z_k
=\mathcal{O}(\epsilon^2)$. This means that for $\epsilon$ small
the resonance is physically essential.
Employing the lowest order perturbation in~$\epsilon$, we can write
$$
\Gamma _w \ \approx \
2\pi \frac{\mathrm{d} \big(\omega_k ,V_p \mathrm{E}(\lambda ) V_p
\omega_k \big)_{L^2_+}}{\mathrm{d}\lambda } \Big|_{\lambda =\nu_k}
\epsilon^2 \,.
$$
This gives  the Fermi golden rule. Moreover, $\Gamma_w ^{-1}$
determines the life time of the resonance state.
\begin{remark} The phenomena of resonances induced by broken symmetry
was studied in~\cite{EK3,K12}. However, in the models analyzed
there the singular potentials are constants and the broken
symmetry has rather a geometrical character. The methods derived
in this paper  essentially differ from the technics applied in
\cite{EK3,K12}.
Finally, let us mention that the resonances phenomena and the
decay law were recently reviewed in \cite{Ex-resonances}. The
systems with singular potentials were considered as examples
of solvable models.
\end{remark}
\subsection*{Acknowledgement}
The authors are grateful to the anonymous referee
for a careful reading of the manuscript,
for pointing out an error contained in the first version of the paper
and for many other valuable remarks and suggestions.
The work has been partially supported by RVO61389005 and the GACR
grant No.\ P203/11/0701.

%

\providecommand{\bysame}{\leavevmode\hbox
to3em{\hrulefill}\thinspace}
\providecommand{\MR}{\relax\ifhmode\unskip\space\fi MR }
\providecommand{\MRhref}[2]{%
  \href{http://www.ams.org/mathscinet-getitem?mr=#1}{#2}
} \providecommand{\href}[2]{#2}

\end{document}